\numberwithin{equation}{section}
\numberwithin{figure}{section}
\newcommand{\R}{\mathbb{R}}
\newcommand{\N}{\mathbb{N}}
\newcommand{\F}{\mathcal{F}}
\renewcommand{\P}{\mathbb{P}}
\newcommand{\E}{\mathbb{E}}
\newcommand{\e}{\varepsilon}
\newcommand{\1}{\mathbbm{1}}
\newcommand{\bfrac}[2]{\genfrac{}{}{0pt}{}{#1}{#2}}
\newcommand{\Lu}{ \overset{\Theta}{\Longrightarrow}}
\newtheorem{Theorem}{Theorem}[section]
\newtheorem{Proposition}[Theorem]{Proposition}\newtheorem{Corollary}[Theorem]{Corollary}\newtheorem{Lemma}[Theorem]{Lemma}\newtheorem{Remark}[Theorem]{Remark}\newtheorem{Definition}[Theorem]{Definition}\newtheorem{Example}[Theorem]{Example}
\numberwithin{equation}{section}
\begin{document}
\title[Ergodicity and LLN for the VCIR process]{Ergodicity and Law-of-large numbers for the Volterra Cox-Ingersoll-Ross process}
\author{Mohamed Ben Alaya}
\author{Martin Friesen}
\author{Jonas Kremer$^1$}
\thanks{$^1$The views, opinions, positions or strategies expressed in this article are those of the authors and do not necessarily
represent the views, opinions, positions or strategies of, and should not be attributed to E.ON Energy Markets.}

\address[Mohamed Ben Alaya]{Laboratoire de Mathématiques Raphaël Salem (LMRS)
\\ Université de Rouen Normandie}
\email{mohamed.ben-alaya@univ-rouen.fr}

\address[Martin Friesen]{School of Mathematical Sciences\\
Dublin City University\\ Glasnevin, Dublin 9, Ireland}
\email{martin.friesen@dcu.ie}

\address[Jonas Kremer]{Modelling \& Quant. Analytics \\ E.ON Energy Markets \\ 45131 Essen, Germany}
\email{jonas.kremer@eon.com}

\date{\today}

\subjclass[2020]{Primary 62M09; Secondary 62F12, 60G22}

\keywords{affine Volterra process; rough Cox-Ingersoll-Ross process; ergodicity; law of large numbers; maximum likelihood}

\begin{abstract}
 We study the ergodic properties for the Volterra Cox-Ingersoll-Ross process on $\R_+$ and its stationary version. Based on a fine asymptotic analysis of the corresponding Volterra Riccati equation combined with the affine transformation formula, we first show that the finite-dimensional distributions are asymptotically independent. Afterwards, we prove a Law of Large Numbers in $L^p(\Omega)$ with $p \geq 2$ and show that the stationary process is ergodic. As an application, we prove the consistency of the method of moments and study the maximum-likelihood estimation for continuous and discrete high-frequency observations. 
\end{abstract}

\maketitle

\allowdisplaybreaks

\section{Introduction}

\subsection{Volterra Cox-Ingersoll-Ross process}

Stochastic Volterra processes allow for the modelling of roughness in sample paths (for an overview of the literature see e.g. \cite{math11194201}), and a flexible description of short and long-range dependencies, see \cite{MR3561100}. Both properties are captured via a Volterra kernel $K \in L_{loc}^2(\R_+)$. The fractional Riemann-Liouville kernel 
\begin{align}\label{eq: fractional kernel}
 K_{\alpha}(t) = \frac{t^{\alpha - 1}}{\Gamma(\alpha)}, \qquad \alpha \in (1/2, 1]
\end{align}
constitutes the most prominent example of Volterra kernels that allow for flexible incorporation of rough sample path behaviour. Other examples of Volterra kernels covered by this work are, e.g., the exponentially damped fractional kernel $K(t) = \frac{t^{\alpha-1}}{\Gamma(\alpha)}e^{-\lambda t}$, $K(t) = \sum_{j=1}^N c_j e^{-\lambda_j t}$ used for the Markovian approximation of solutions of \eqref{eq: general equation} \cite{MR3934104, MR4521278}, and the fractional $\log$-kernel $K(t) = \log(1+t^{-\alpha})$ with $\alpha \in (0,1]$. In the absence of jumps and dimension one, a stochastic Volterra process of convolution type is given by  
\begin{align}\label{eq: general equation}
 X_t = x_0 + \int_0^t K(t-s)b(X_s)ds + \int_0^t K(t-s)\sigma(X_s)dB_s,
\end{align}
where $(B_t)_{t \geq 0}$ denotes a one-dimensional standard Brownian motion, $K \in L_{loc}^2(\R_+)$ the Volterra kernel, and $b, \sigma: \R \longrightarrow \R$ are the drift and diffusion coefficients. 

Processes of the form \eqref{eq: general equation} provide a feasible framework for the modelling of rough volatility in Mathematical Finance, see e.g. \cite{MR3494612, MR3778355, MR4188876, MR3805308}. If the coefficients $b$ and $\sigma^2$ are affine linear in the state variable, then \eqref{eq: general equation} is called \textit{affine Volterra process}. In contrast to general stochastic Volterra processes, an important advantage of affine Volterra processes lies in their analytical traceability, see \cite{MR1994043, MR3905737}. For such processes, many computations can be carried out semi-explicit (e.g. via the affine transformation formula), making affine Volterra processes feasible for particular applications such as option pricing, see e.g. \cite{MR4019885, MR4412586, MR3905737, MR4091168}. We focus on Volterra kernels that satisfy the following set of regularity assumptions uniformly in $0 \leq t-s \leq 1$, which provide the existence and uniqueness in law for the process and its stationary version.
\begin{enumerate}
    \item[(K1)] The Volterra kernel $K \in L_{loc}^2(\R_+)$ is completely monotone, not identically zero, and there exists $\gamma \in (0,1]$ such that 
    \begin{align}\label{eq: K global regularity}
    \sup_{0 \leq t-s \leq 1}(t-s)^{-2\gamma}\left(\int_s^t K(r)^2 dr + \int_0^{\infty}(K((t-s) + r) - K(r))^2 dr\right) < \infty.
    \end{align}    
    \item[(K2)] There exists $\eta \in (0,1)$ such that 
    \begin{align*}
    \int_0^T t^{-2\eta}K(t)^2 dt + \int_0^T \int_0^T \frac{|K(t)-K(s)|^2}{|t-s|^{1+2\eta}}dsdt < \infty, \qquad T > 0.
    \end{align*}
\end{enumerate}

Suppose that condition (K1) is satisfied and $x_0, b, \sigma \geq 0$, $\beta \in \R$. The \textit{Volterra Cox-Ingersoll-Ross process} (VCIR process) is the affine Volterra process on state-space $\R_+$ obtained as the unique weak solution $X$ on $\R_+$ of the stochastic Volterra equation
\begin{align}\label{eq: VCIR}
    X_t = x_0 + \int_0^t K(t-s)\left( b + \beta X_s \right)ds + \sigma \int_0^t K(t-s)\sqrt{X_s}dB_s
\end{align}
defined on some stochastic basis $(\Omega, \F, (\F_t)_{t\geq 0}, \mathbb{P}_{b,\beta})$ with the usual conditions supporting a standard Brownian motion $(B_t)_{t \geq 0}$, see \cite[Theorem 6.1]{MR4019885}. For each $\eta \in (0,\gamma)$, $X$ admits a modification with $\eta$-H\"older continuous sample paths and satisfies $\sup_{t \in [0, T]}\E_{b,\beta}[|X_t|^p] < \infty$ for each $p \geq 2$ and $T >0$. Similarly to the classical CIR process (that is $K(t) = 1$), also the VCIR process satisfies an affine transformation formula, see \cite[Theorem 6.1]{MR4019885}. Under the additional condition (K2), a more general version for the Laplace transform of $\int_{[0,t]}X_{t-s}\mu(ds)$ with $\mu$ a locally finite measure on $\R_+$ was given in \cite[Corollary 3.11]{FJ22} which reads as 
\begin{align}\label{eq: VCIR Laplace}
    & \E_{b,\beta}\left[ e^{- \int_{[0,t]}X_{t-s}\mu(ds)}\right]
    \\ &\qquad \qquad = \exp\left( - x_0 \mu([0,t]) - x_0\int_0^t R(V(s;\mu))ds - b \int_0^t V(s;\mu)ds\right) \nonumber
\end{align}
where $R(x) = \beta x - \frac{\sigma^2}{2}x^2$, and $V(\cdot;\mu) \in L_{loc}^2(\R_+)$ is the unique nonnegative solution of the Riccati Volterra equation
\begin{align}\label{eq: V mu}
    V(t;\mu) = \int_{[0,t]}K(t-s)\mu(ds) + \int_0^t K(t-s)R(V(s;\mu))ds.
\end{align}
This formula allows us to express the finite-dimensional distributions of $X$ via $V(\cdot;\mu)$ for appropriate choices of $\mu$. For a detailed analysis of such Riccati-type equations, we refer to \cite[Section 6]{MR4019885}, \cite[Section 3]{FJ22}, and \cite{MR4031338}.

\subsection{Law-of-Large numbers and ergodicity}

In mathematical finance, the Volterra Cox-Ingersoll-Ross process is used for the modelling of rough volatility. Since the latter are mean-reverting, we suppose the linear drift is negative, i.e. $\beta < 0$. Consequently, following \cite[Section 5]{FJ22}, $X_t$ converges towards a limit distribution $\pi_{b,\beta}$ as $t \to \infty$. Its Laplace transform is given by
\begin{align}\label{eq: pi affine formula}
 \int_{\R_+} e^{-u x}\pi_{b,\beta}(dx) = \exp\left( - x_0u - x_0 \int_0^{\infty} R(\overline{V}(s; u))ds - b \int_0^{\infty} \overline{V}(s; u)ds\right),
\end{align}
where $\overline{V}(\cdot;u) \in L^1(\R_+) \cap L^2(\R_+)$ with $u \geq 0$ is the unique solution of \eqref{eq: V mu} with $\mu(ds) = u \delta_0(ds)$. Due to the additional memory in stochastic Volterra processes, the limiting distribution generally depends on the initial state $x_0$. It was shown that $\pi_{b,\beta}$ is independent of $x_0$ iff $K \not \in L^1(\R_+)$.

For each such limit distribution $\pi_{b,\beta}$, there also exists an underlying stationary process with time-marginals given by the limit distribution constructed via the extended affine transformation \eqref{eq: VCIR Laplace}. Namely, under conditions (K1), (K2) and $\beta < 0$, there exists a stationary probability law $\P^{\mathrm{stat}}_{b,\beta}$ on $C(\R_+)$ such that the shifted process satisfies $(X_{t+h})_{t \geq 0} \Longrightarrow \P^{\mathrm{stat}}_{b,\beta}$ weakly on $C(\R_+)$ as $h \to \infty$. The finite-dimensional distributions of $\P^{\mathrm{stat}}_{b,\beta}$ are according to \cite[Theorem 5.4]{FJ22} characterized via the affine transformation formula
\begin{align*}
 &\ \E_{b,\beta}^{\mathrm{stat}}\left[ e^{-\sum_{k=1}^n u_k X_{t_k}}\right]
 \\ &\qquad \qquad = \exp\left( - x_0\left(\sum_{k=1}^n u_k\right) - x_0\int_0^{\infty}R(V(s; \mu_{t_1,\dots, t_n})) ds - b \int_0^{\infty}V(s; \mu_{t_1,\dots, t_n})ds \right)
\end{align*}
where $n \in \N$, $u_1,\dots, u_n \geq 0$, and $V(\cdot; \mu_{t_1,\dots, t_n})$ denotes the unique solution of \eqref{eq: V mu} with the particular choice  
\begin{align}\label{eq: mu VCIR finite dimensional}
 \mu_{t_1,\dots, t_n} = \sum_{k=1}^n u_k \delta_{t_n - t_k}. 
\end{align}
Note that since $\mu_{t_1,\dots, t_n}(\R_+) < \infty$, it follows that $V(\cdot; \mu_{t_1,\dots, t_n}) \in L^1(\R_+) \cap L^2(\R_+)$ and hence all integrals are well-defined. In particular, for $n = 1$ we recover $X_t \Longrightarrow \pi_{b,\beta}$ as $t \to \infty$ together with \eqref{eq: pi affine formula}.

In this work, we study ergodic properties of the VCIR process and its stationary law $\P_{b,\beta}^{\mathrm{stat}}$. Our main result, Theorem~\ref{thm: asymptotic independence}, establishes that the VCIR process exhibits \textit{asymptotic independence} in the sense that the joint law satisfies
\begin{align}\label{eq: asymptotic independence}
 \begin{cases}(X_{t_1}, \dots, X_{t_n}) \Longrightarrow \pi_{b,\beta}^{\otimes n} \ \text{ as } \ t_1,\dots, t_n \to \infty
 \\ \text{ such that } \min_{k}|t_{k+1} - t_k| \to \infty \text{ with } 0 < t_1 < \dots < t_n.
 \end{cases}
\end{align}
The same assertion also holds for the stationary process with law $\P_{b,\beta}^{\mathrm{stat}}$. Let us remark that this convergence holds locally uniformly with respect to the drift parameters $(b,\beta)$. The definition and additional properties of uniform weak convergence are given in the appendix (see Section \ref{sec:uniform weak convergence}). Our proof of the asymptotic independence property is based on the generalised affine transformation formula \eqref{eq: VCIR Laplace} combined with a fine asymptotic analysis of the corresponding nonlinear Volterra Riccatti equation \eqref{eq: V mu}. It is important to note that, due to the lack of the Markov property on $\R_+$, this result cannot be deduced from the convergence of the one-dimensional distributions.

In our second main result, Theorem \ref{thm: law of large numbers VCIR}, we study the Law of Large Numbers for the VCIR process, and its stationary version locally uniformly in the drift parameters $(b,\beta)$. While for Markov processes the Law of Large Numbers is well-understood under weak ergodic rates (see \cite{MR3684455, Kulik}), these results are not applicable since the VCIR process is not a Markov process on $\R_+$. In particular, convergence of one-dimensional distributions alone does not contain enough information to conclude the process's Law of Large Numbers or further ergodic properties. To tackle this problem, let us first note that \eqref{eq: asymptotic independence} gives $\mathrm{cov}(X_t,X_s) \longrightarrow 0$ as $|t-s| \to \infty$ with $t,s \to \infty$. Hence by adjusting the proofs from \cite{MR3684455} to general non-Markovian settings as done in \cite[Lemma 2.1]{BFK24}, we deduce in Theorem \ref{thm: law of large numbers VCIR} the Law of Large Numbers in $L^p(\Omega, \P_{b,\beta})$, i.e., 
\begin{align}\label{eq: Birkhoff VOU}
 \frac{1}{T}\int_0^T f(X_s)ds \longrightarrow \int_{\R_+}f(x)\pi_{b,\beta}(dx)
\end{align}
locally uniformly in the parameters $(b,\beta)$ for a large class of functions $f$. To treat also the case of functions that are only a.e. continuous, we combine the uniform continuous mapping theorem (see Theorem \ref{thm: uniform CMT}) with a refined version of the Besov regularity of the limit distribution established in \cite{FJ22}. Note that the stationary process $X^{\mathrm{stat}}$ also satisfies \eqref{eq: Birkhoff VOU}. Finally, since the right-hand side of \eqref{eq: Birkhoff VOU} is deterministic, in Corollary \ref{cor: VCIR mixing} we show that $X^{\mathrm{stat}}$ is ergodic in the sense that its shift-invariant $\sigma$-algebra is trivial. To our knowledge, the latter provides the first ergodicity result for non-Gaussian Volterra processes.

\subsection{Application to statistical inference}

As an application of the established Law of Large Numbers, we study the estimation of drift parameters $(b,\beta)$ based on continuous, discrete high-frequency, and discrete low-frequency observations. The probably simplest method to estimate the drift parameters $(b,\beta)$ is based on the method of moments. Since here explicit computations are crucial to obtain \textit{explicit estimators}, in Section 2.4 we focus on the particular case of fractional kernels \eqref{eq: fractional kernel}. Applying the Law of Large Numbers \eqref{eq: Birkhoff VOU}, we derive in Corollary \ref{cor: VCIR method of moments} the consistency for the method of moments.

A more delicate estimation method that allows for more efficient estimators is provided by the maximum-likelihood method. In the absence of memory, results applicable to classical affine processes have been obtained in \cite{MR3035268, MR3216637, BK13}, while we refer to \cite{MR2144185} for the general theory on statistical inference for Markov diffusion processes. Following the approach described in \cite{BFK24} for general stochastic Volterra equations, in Section 3 we use the Law of Large Numbers to establish convergence properties for the MLE of $(b,\beta)$ first for continuous, and then for discrete high-frequency observations. More precisely, based on continuous observations we first derive an explicit formula for the MLE $(\widehat{b}_T, \widehat{\beta}_T)$ and then show that it is strongly consistent, and asymptotically normal locally uniformly in the drift parameters $(b,\beta)$. Subsequently, we treat the case of discrete high-frequency observations by numerical discretisation of the continuous MLE and, in particular, derive bounds on the discretisation error.

Finally, we conclude with additional comments on the estimation of the other model parameters $\sigma, K$. Similarly to classical diffusion processes, we can estimate $\sigma$ via a quadratic variation provided that $X$ is continuously observed on a \textit{fixed} interval $[0,T]$. It follows from representation \eqref{eq: Z process} that the process given by $Z_t(X) = (b + \beta X_t)dt + \sigma \sqrt{X_t}dB_t$ is measurable with respect to $X$. Using the quadratic variation $\langle Z(X) \rangle_T = \sigma^2 \int_0^T X_s ds$, we find the relation
\begin{align}\label{eq: sigma2 estimator}
 \sigma^2 = \frac{\langle Z(X) \rangle_T}{\int_0^T X_t dt}
\end{align}
which allows from the estimation of $\sigma$, provided that $X$ (and hence $Z(X)$ via \eqref{eq: Z process}) is continuously observed. For discrete high-frequency observations, we may consider a discretised version of \eqref{eq: Z process} and hence \eqref{eq: sigma2 estimator}. Convergence rates for such discretisations can be obtained via Lemma \ref{lemma: discretisation VOU} and \eqref{eq: Z discretisation 1}.

On the other side, the Volterra kernel $K$ can be approximated via a discretisation of its Bernstein measure \cite{MR3934104, MR4521278} which, in particular, should allow for the estimation of these parameters from given observations. For the case of rough kernels \eqref{eq: fractional kernel}, further estimation techniques have been analysed in \cite{MR4804179}. Non-parametric estimation techniques for the Volterra kernel have not yet been studied in the literature, and are left for future research.

\subsection{Structure of the work}

In section 2, we first briefly collect some auxiliary moment estimates and then prove the asymptotic independence \eqref{eq: asymptotic independence} as our main result. Afterwards, we prove the Law of Large Numbers, study the ergodicity for the stationary processes, and apply our results to the method of moments. An application of our results to the maximum-likelihood estimation is considered in Section 3 first for continuous observations and then for discrete high-frequency observations. Numerical illustrations are given in Section 4, while further technical results on the weak convergence uniformly in the parameter space are discussed in the appendix.

\section{Ergodicity for the Volterra Cox-Ingersoll-Ross process}

\subsection{Uniform bounds}

Let $K \in L_{loc}^2(\R_+)$ and $\beta \in \R$. According to \cite[Theorem 3.5]{MR1050319}, there exists a unique solution $E_{\beta} \in L_{loc}^2(\R_+)$ of the linear Volterra equation
\begin{align}\label{eq: Ebeta definition}
 E_{\beta}(t) = K(t) + \beta \int_0^t K(t-s)E_{\beta}(s)ds.
\end{align}
Moreover, since $K \in L_{loc}^2(\R_+) \cap C((0,\infty))$, one can show that $E_{\beta} \in L_{loc}^2(\R_+) \cap C((0,\infty))$. Finally, the function $R_{\beta}(t) = (-\beta)E_{\beta}(t)$ is the resolvent of the second kind as defined in \cite{MR1050319}. 

Now suppose that $\beta < 0$ and $K$ is completely monotone. It follows from \cite[Chapter 5, Theorem 3.1]{MR1050319} that $R_{\beta} = |\beta|E_{\beta} \in L^1(\R_+)$ is completely monotone. Hence $E_{\beta}$ is also completely monotone such that $E_{\beta} \in L^1(\R_+) \cap L^2(\R_+)$. Taking Laplace transforms in the definition of $E_{\beta} = K + \beta K \ast E_{\beta}$, we find 
\begin{align}\label{eq: Ebeta integral}
         \int_0^{\infty}E_{\beta}(t)dt = \frac{1}{\|K\|_{L^1(\R_+)}^{-1} + |\beta|}
        \end{align}
with the convention $1/\infty = 0$ when $K \not \in L^1(\R_+)$. Such a function satisfies the monotonicity property 
\begin{align}\label{eq: Ebeta monotonicity}
 E_{\beta} \leq E_{\widetilde{\beta}} \ \text{ for } \ \widetilde{\beta} < \beta < 0
\end{align}
as shown in \cite[Lemma B.3]{BBF23}. Let $x_0, b, \sigma \geq 0$, $\beta \in \R$ and $K$ satisfy condition (K1). Let $X$ be the VCIR process defined on some filtered probability space $(\Omega, \F, (\F_t)_{t \geq 0}, \P_{b,\beta})$ with the usual conditions. The next proposition provides uniform moment bounds with respect to time and the parameters $(b,\beta)$. 

\begin{Proposition}\label{prop: VCIR technical stuff}
    Suppose that condition (K1) holds and that $\beta < 0$. Then for each $p \geq 2$ and each compact $\Theta \subset \R_+ \times (-\infty, 0)$ there exists a constant $C_p(\Theta)$ such that
    \[
     \sup_{t \geq 0}\sup_{(b,\beta) \in \Theta}\E_{b,\beta}[ X_t^p ] \leq C_p(\Theta) < \infty
    \]
    and for all $0 \leq t-s \leq 1$ it holds that
    \[
     \sup_{(b,\beta) \in \Theta}\E_{b,\beta}[|X_t-X_s|^p] \leq C_p(\Theta)|t-s|^{p\gamma}.
    \]
    Finally, it holds that
    \begin{align*}
     \lim_{t \to \infty}&\sup_{(b,\beta) \in \Theta} \left|\E_{b,\beta}[X_t^j] - m_j(b,\beta)\right| = 0, \qquad j = 1,2
    \end{align*}
    where $m_1(b,\beta), m_2(b,\beta)$ are given by
    \begin{align*}
        m_1(b,\beta) &= \frac{x_0 + \|K\|_{L^1(\R_+)}b}{1 + \|K\|_{L^1(\R_+)}|\beta|}
        \\ m_2(b,\beta) &= m_1(b,\beta)^2 + m_1(b,\beta) \sigma^2 \int_0^{\infty} E_{\beta}(t)^2 dt.
    \end{align*}
\end{Proposition}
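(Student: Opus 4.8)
The plan is to base everything on the stochastic‑convolution representation of $X$. Writing $m(t):=\E_{b,\beta}[X_t]$, which is finite by the a priori bound $\sup_{t\in[0,T]}\E_{b,\beta}[X_t^p]<\infty$ from \cite{MR4019885} and solves the deterministic linear Volterra equation $m(t)=x_0+\int_0^t K(t-s)(b+\beta m(s))\,ds$ (take expectations in \eqref{eq: VCIR} using Fubini), variation of constants together with \eqref{eq: Ebeta definition}--\eqref{eq: Ebeta definition 1} yields $m(t)=x_0\mathcal{E}_\beta(t)+b\int_0^t E_\beta(s)\,ds$. Subtracting the equation for $m$ from \eqref{eq: VCIR}, applying the same variation‑of‑constants principle to the resulting linear Volterra equation with stochastic forcing $\sigma\int_0^\cdot K(\cdot-s)\sqrt{X_s}\,dB_s$, and invoking the stochastic Fubini theorem together with the resolvent identity $K+\beta(E_\beta*K)=E_\beta$, one arrives at
\begin{align}\label{eq: X convolution}
 X_t = m(t) + \sigma\int_0^t E_\beta(t-s)\sqrt{X_s}\,dB_s,\qquad m(t)=x_0\mathcal{E}_\beta(t)+b\int_0^t E_\beta(s)\,ds .
\end{align}
Since $\mathcal{E}_\beta$ is nonincreasing with $\mathcal{E}_\beta(0)=1$ and $\int_0^\infty E_\beta<\infty$ by Proposition~\ref{prop: Ebeta uniform bounds}(a), it follows that $0\le m(t)\le x_0+b\|K\|_{L^1(\R_+)}$, so $\sup_{t\ge0}\sup_{(b,\beta)\in\Theta}m(t)<\infty$.

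For the first bound I would fix $p\ge 2$, take $p$‑th moments in \eqref{eq: X convolution}, apply the Burkholder--Davis--Gundy inequality and then Jensen's inequality with respect to the finite measure $E_\beta(t-s)^2\,ds$ on $[0,t]$ to obtain
\begin{align*}
 \E_{b,\beta}[X_t^p]\ \le\ 2^{p-1}m(t)^p + 2^{p-1}C_p\sigma^p\,\|E_\beta^2\|_{L^1(\R_+)}^{p/2-1}\int_0^t E_\beta(t-s)^2\,\E_{b,\beta}[X_s^{p/2}]\,ds .
\end{align*}
Using $\E_{b,\beta}[X_s^{p/2}]\le(\E_{b,\beta}[X_s^p])^{1/2}$ and $\int_0^t E_\beta(t-s)^2\,ds\le\|E_\beta^2\|_{L^1(\R_+)}$, and setting $f_T:=\sup_{t\le T}\E_{b,\beta}[X_t^p]$ (finite a priori), this gives $f_T\le A+B\sqrt{f_T}$ with $A=2^{p-1}\sup_t m(t)^p$ and $B=2^{p-1}C_p\sigma^p\|E_\beta^2\|_{L^1(\R_+)}^{p/2}$ independent of $T$, hence $f_T\le\big(B+\sqrt{B^2+4A}\big)^2/4$, and letting $T\to\infty$ closes the estimate. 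Uniformity over $\Theta$ then reduces to $\sup_{(b,\beta)\in\Theta}\|E_\beta^2\|_{L^1(\R_+)}<\infty$, which follows by writing $\int_0^\infty E_\beta^2=\sum_{k\ge0}\int_k^{k+1}E_\beta^2$, bounding each unit block by the increment estimate of Proposition~\ref{prop: Ebeta uniform bounds}(b) (with $\gamma\le1$), and controlling the tail via the uniform decay $\lim_{t\to\infty}\sup_{\beta\le-1/N}\int_t^\infty E_\beta^2=0$ of part~(a), where $[-N,-1/N]$ is chosen to contain the projection of $\Theta$ onto the $\beta$‑coordinate.

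For the Hölder bound, for $0\le t-s\le 1$ I would split
\begin{align*}
 X_t-X_s = \big(m(t)-m(s)\big) + \sigma\int_s^t E_\beta(t-r)\sqrt{X_r}\,dB_r + \sigma\int_0^s\big(E_\beta(t-r)-E_\beta(s-r)\big)\sqrt{X_r}\,dB_r .
\end{align*}
The deterministic term is bounded by $x_0|\mathcal{E}_\beta(t)-\mathcal{E}_\beta(s)|+b\int_s^t E_\beta\le C(t-s)^{1/2+\gamma}\le C(t-s)^{\gamma}$ by Proposition~\ref{prop: Ebeta uniform bounds}(b). To each stochastic term I would apply Burkholder--Davis--Gundy, then Jensen as above, then the moment bound just proven, which reduces matters to the two deterministic quantities $\int_0^{t-s}E_\beta(u)^2\,du$ and $\int_0^\infty\big(E_\beta((t-s)+u)-E_\beta(u)\big)^2\,du$, both $\le C(t-s)^{2\gamma}$ with $C$ independent of $\beta$ by Proposition~\ref{prop: Ebeta uniform bounds}(b). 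Raising to the power $p/2$ and combining the three contributions via $|a+b+c|^p\le 3^{p-1}(|a|^p+|b|^p+|c|^p)$ yields $\E_{b,\beta}[|X_t-X_s|^p]\le C_p(\Theta)(t-s)^{p\gamma}$.

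Finally, $\E_{b,\beta}[X_t]=m(t)=x_0\mathcal{E}_\beta(t)+b\int_0^t E_\beta\to m_1(b,\beta)$ uniformly on $\Theta$ is immediate from the two uniform limits in Proposition~\ref{prop: Ebeta uniform bounds}(a) together with $\int_0^\infty E_\beta=\|K\|_{L^1(\R_+)}/(1+\|K\|_{L^1(\R_+)}|\beta|)$. For the second moment, It\^o's isometry applied to \eqref{eq: X convolution} gives the closed form $\E_{b,\beta}[X_t^2]=m(t)^2+\sigma^2\int_0^t E_\beta(t-s)^2 m(s)\,ds$; writing the convolution as $\int_0^t E_\beta(u)^2 m(t-u)\,du$, splitting at $u=t/2$, and using that $m\to m_1$ uniformly, that $m$ is uniformly bounded, and that $\int_{t/2}^\infty E_\beta^2\to0$ uniformly for $\beta\le-1/N$, one obtains $\E_{b,\beta}[X_t^2]\to m_1(b,\beta)^2+\sigma^2 m_1(b,\beta)\int_0^\infty E_\beta^2=m_2(b,\beta)$ uniformly on $\Theta$. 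The argument is essentially routine once \eqref{eq: X convolution} is available; the two points requiring genuine care are (i) the stochastic Fubini theorem underlying \eqref{eq: X convolution}, whose hypotheses are checked using $E_\beta,K\in L_{loc}^2(\R_+)$ and the boundedness of $m$, and (ii) upgrading the moment and increment estimates of \cite{MR4019885}, which are uniform only on compact time windows, to bounds uniform over all $t\ge0$ \emph{and} over $\Theta$ — and this is exactly where the uniform‑in‑$\beta$ control of $E_\beta$ from Proposition~\ref{prop: Ebeta uniform bounds} is indispensable.
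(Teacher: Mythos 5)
Your proposal is correct and follows essentially the paper's route: the representation $X_t=\E_{b,\beta}[X_t]+\sigma\int_0^tE_\beta(t-s)\sqrt{X_s}\,dB_s$ with $\E_{b,\beta}[X_t]=x_0\mathcal{E}_\beta(t)+b\int_0^tE_\beta(s)ds$, BDG plus Jensen with respect to the finite measure $E_\beta(t-s)^2ds$, the same three-term decomposition of $X_t-X_s$, and the It\^o-isometry identity $\E_{b,\beta}[X_t^2]=\E_{b,\beta}[X_t]^2+\sigma^2\int_0^tE_\beta(t-s)^2\E_{b,\beta}[X_s]ds$, all made uniform on $\Theta$ via Proposition~\ref{prop: Ebeta uniform bounds}. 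The one step you do differently is how the uniform-in-time $p$-th moment bound is closed: the paper iterates the inequality bounding $\E_{b,\beta}[X_t^p]$ by $\sup_{t}\E_{b,\beta}[X_t^{p/2}]$ along dyadic exponents $p=2^n$ starting from the explicit second moment, whereas you invoke the a priori finiteness of $\sup_{t\le T}\E_{b,\beta}[X_t^p]$ from \cite{MR4019885} and $\E[X^{p/2}]\le\E[X^p]^{1/2}$ to get the self-bounding inequality $f_T\le A+B\sqrt{f_T}$ with $A,B$ independent of $T$ and of $(b,\beta)\in\Theta$; both are valid, yours handles every $p\ge2$ in one stroke, while the paper's bootstrap needs no a priori moment input. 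Your explicit verification of $\sup_{(b,\beta)\in\Theta}\int_0^\infty E_\beta^2\,dt<\infty$ and the split-at-$t/2$ argument for the second-moment limit supply details the paper leaves implicit. One cosmetic caveat: under (K1) the kernel need not be integrable (e.g.\ the fractional kernel), so the intermediate bound $m(t)\le x_0+b\|K\|_{L^1(\R_+)}$ should instead be $m(t)\le x_0+b\int_0^\infty E_\beta\le x_0+b/|\beta|$ via \eqref{eq: Ebeta integral}, and the expressions involving $\|K\|_{L^1(\R_+)}$ are to be read in the limiting sense when $\|K\|_{L^1(\R_+)}=\infty$, exactly as in the statement of $m_1(b,\beta)$; this does not affect the validity of your argument.
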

\begin{proof}
    Using the variation of constants formula (see e.g. \cite[Lemma 2.5]{MR4019885} combined with $R_{\beta} = |\beta|E_{\beta}$), we find 
    \begin{align}\label{eq: VCIR expectation form}
        X_t = \E_{b,\beta}[X_t] + \sigma \int_0^t E_{\beta}(t-s)\sqrt{X_s}dB_s
    \end{align}
    where the expectation is given by 
    \[
        \E_{b,\beta}[X_t] = \left( 1 + \beta \int_0^t E_{\beta}(s)ds\right)x_0 + b \int_0^t E_{\beta}(s)ds.
    \]
    Using \eqref{eq: Ebeta monotonicity}, it is easy to see that 
    \begin{align}\label{eq: Ebeta tail bound}
        \lim_{t \to \infty}\sup_{\beta \leq -1/N}\left( \int_t^{\infty}E_{\beta}(r)dr + \int_t^{\infty}E_{\beta}(r)^2dr\right) = 0.
    \end{align}
    Hence, using \eqref{eq: Ebeta integral}, it is clear that $\sup_{t \geq 0} \sup_{(b,\beta) \in \Theta}\E_{b,\beta}[X_t] < \infty$ and $\E_{b,\beta}[X_t] \longrightarrow m_1(b,\beta)$ converges uniformly on $\Theta$ as $t \to \infty$. Likewise, its variance is given by $\mathrm{var}_{b,\beta}[X_t] = \sigma^2 \int_0^t E_{\beta}(t-s)^2\E_{b,\beta}[X_s]ds$ and hence
    \[
    \E_{b,\beta}[X_t^2] = \left( \E_{b,\beta}[X_t]\right)^2 + \sigma^2 \int_0^t E_{\beta}(t-s)^2\E_{b,\beta}[X_s]ds.
    \]
    Again using \eqref{eq: Ebeta tail bound}, we conclude that the second moment is uniformly bounded for $t \geq 0$ and $(b,\beta) \in \Theta$, and converges uniformly on $\Theta$ as $t \to \infty$.    
    
    It remains to prove the uniform bounds on the $p$-moments and H\"older increments in $L^p(\Omega, \P_{b,\beta})$. Firstly, using \cite[Lemma A.3]{FJ22} and then \eqref{eq: K global regularity} from condition (K1), we find a constant $C > 0$ (independent of $\beta$) such that for all $0 \leq t-s \leq 1$ and any $\beta < 0$
    \begin{align*}
        \int_s^t E_{\beta}(r)^2 dr + \int_0^{\infty}(E_{\beta}(t-s+r) - E_{\beta}(r))^2 dr \leq C(t-s)^{2\gamma}.
    \end{align*}
    Using \eqref{eq: VCIR expectation form}, the boundedness of higher moments follows from 
    \begin{align*}
        \E_{b,\beta}[X_t^{p}] &\leq 2^{p-1}\left(\E_{b,\beta}[X_t]\right)^{p} + 2^{p-1}\sigma^p C_p \E_{b,\beta}\left[ \left( \int_0^t E_{\beta}(t-s)^2 X_s ds \right)^{p/2}\right]
        \\ &\leq 2^{p-1}\left(\E_{b,\beta}[X_t]\right)^p + 2^{p-1}\sigma^p C_p \left( \int_0^t E_{\beta}(s)^2ds\right)^{\frac{p}{2}-1} \int_0^t E_{\beta}(t-s)^2\E_{b,\beta}[X_s^{p/2}] ds
        \\ &\leq 2^{p-1}\left(\sup_{t \geq 0}\E_{b,\beta}[X_t]\right)^p + 2^{p-1}\sigma^p C_p \left( \int_0^t E_{\beta}(s)^2ds\right)^{\frac{p}{2}} \sup_{t \geq 0}\E_{b,\beta}[X_t^{p/2}].
    \end{align*}
    and iteration applied to $p = 2^n$ with $n \geq 2$. For the increments of the process, we first estimate
    \begin{align*}
        \E_{b,\beta}[|X_t - X_s|^p] &\leq 2^{p-1}\left| \E_{b,\beta}[X_t] - \E_{b,\beta}[X_s]\right|^p 
        \\ &\qquad + 2^{p-1}\sigma^p C_p \E_{b,\beta}\left[ \left( \int_0^s (E_{\beta}(t-r) - E_{\beta}(s-r))^2 X_r dr\right)^{p/2} \right]
        \\ &\qquad + 2^{p-1}\sigma^p C_p \E_{b,\beta}\left[ \left( \int_s^t E_{\beta}(t-r)^2 X_r dr\right)^{p/2} \right].
    \end{align*}
    The first term can be estimated by the explicit form of the firm moment combined with \eqref{eq: Ebeta tail bound}. For the third term, we obtain
    \begin{align*}
        \E_{b,\beta}\left[ \left( \int_s^t E_{\beta}(t-r)^2 X_r dr\right)^{p/2} \right]
        &\leq \left( \int_s^t E_{\beta}(t-r)^2 dr \right)^{\frac{p}{2}-1} \int_s^t E_{\beta}(t-r)^2 \E_{b,\beta}[X_r^{p/2}] dr
        \\ &\leq \sup_{t \geq 0} \sup_{(b,\beta) \in \Theta}\E_{b,\beta}[X_t^{p/2}] \left( \int_s^t E_{\beta}(t-r)^2 dr \right)^{\frac{p}{2}}
    \end{align*}
    which gives the desired bound. The second term can be estimated in the same way.     
\end{proof}

\subsection{Asymptotic independence}

In this section, we prove that the VCIR process and its stationary law are asymptotically independent. It follows from the variation of constants formula for Volterra equations that $V(\cdot;\mu)$ given by \eqref{eq: V mu} is also the unique solution of the Volterra Riccatti equation
\begin{align}\label{eq: V mu Ebeta}
 V(t;\mu) = \int_{[0,t]}E_{\beta}(t-s)\mu(ds) - \frac{\sigma^2}{2} \int_0^t E_{\beta}(t-s)V(s;\mu)^2ds.
\end{align}
As a first step, we prove bounds on the integrals of $V(\cdot;\mu)$.

\begin{Lemma}\label{lemma: V mu Ebeta}
 Suppose that $\mu(\R_+) < \infty$, (K1) and (K2) hold and that $\beta < 0$. Then  
 \[
  \int_T^{S}V(t;\mu)^p dt \leq \mu(\R_+)^{p-1}\left( \int_{[0,T]} \int_{T-s}^{S-s}E_{\beta}(t)^pdt\mu(ds) + \int_{(T,S)}\int_0^{S-s}E_{\beta}(t)^pdt \mu(ds)\right)
 \]
 holds for $p = 1,2$ and $0 \leq T < S \leq +\infty$.
\end{Lemma}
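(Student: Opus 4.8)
The plan is to exploit the favourable sign of the nonlinear term in the equivalent Riccati–Volterra equation \eqref{eq: V mu Ebeta}. First I would note that by Proposition~\ref{prop: Ebeta uniform bounds}(a) the resolvent $E_{\beta}$ is nonnegative, and that $V(\cdot;\mu)$ is nonnegative by hypothesis; consequently the subtracted term $\frac{\sigma^2}{2}\int_0^t E_{\beta}(t-s)V(s;\mu)^2\,ds$ in \eqref{eq: V mu Ebeta} is nonnegative. This yields the pointwise upper bound
\[
0 \le V(t;\mu) \le \int_{[0,t]}E_{\beta}(t-s)\,\mu(ds), \qquad t \ge 0 .
\]
Everything afterwards is a consequence of this single inequality together with Tonelli's theorem.

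\textbf{The case $p=1$.} I would integrate the bound above over $t\in(T,S)$ and interchange the $dt$-integral with the $\mu(ds)$-integral, which is legitimate since the integrand is nonnegative. The constraints $s\in[0,t]$, $t\in(T,S)$ rewrite as $s\in[0,S)$ and $t\in(\max\{T,s\},S)$. Splitting the outer range $[0,S)=[0,T]\cup(T,S)$ and performing the substitution $t\mapsto t-s$ in the inner integral, one gets $\int_{T-s}^{S-s}E_{\beta}(t)\,dt$ on the first piece (where $\max\{T,s\}=T$) and $\int_0^{S-s}E_{\beta}(t)\,dt$ on the second piece (where $\max\{T,s\}=s$). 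This is exactly the claimed estimate, with the prefactor $\mu(\R_+)^{p-1}=1$.

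\textbf{The case $p=2$.} Here I would first apply the Cauchy–Schwarz inequality with respect to the finite measure $\mu$, namely
\[
V(t;\mu)^2 \le \left(\int_{[0,t]}E_{\beta}(t-s)\,\mu(ds)\right)^{\!2} \le \mu([0,t])\int_{[0,t]}E_{\beta}(t-s)^2\,\mu(ds) \le \mu(\R_+)\int_{[0,t]}E_{\beta}(t-s)^2\,\mu(ds),
\]
and then repeat verbatim the Tonelli/domain-splitting computation of the previous paragraph, now with $E_{\beta}^2$ in place of $E_{\beta}$. This produces the stated inequality with prefactor $\mu(\R_+)^{p-1}=\mu(\R_+)$. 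Finiteness of all integrals involved, including in the limiting case $S=+\infty$, is guaranteed by $E_{\beta}\in L^1(\R_+)\cap L^2(\R_+)$ from Proposition~\ref{prop: Ebeta uniform bounds}(a).

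\textbf{Main difficulty.} There is no genuine obstacle; the argument is essentially a one-line sign observation followed by Tonelli's theorem. The only points that need (minor) care are the bookkeeping when changing the order of integration and the correct identification of the two regions $s\le T$ and $s>T$ after the shift $t\mapsto t-s$. Note that condition (K2) is not actually used in this estimate — it enters only through the ambient fact that $V(\cdot;\mu)$ solves \eqref{eq: V mu Ebeta} — so the hypotheses could in principle be weakened for this lemma alone.
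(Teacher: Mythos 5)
Your proposal is correct and follows essentially the same route as the paper: the sign observation $0 \le V(t;\mu) \le \int_{[0,t]}E_{\beta}(t-s)\mu(ds)$ from \eqref{eq: V mu Ebeta}, then Jensen/Cauchy--Schwarz to pull out the factor $\mu(\R_+)^{p-1}$, followed by Tonelli and the split of the $s$-domain at $T$. The paper phrases the $p=1,2$ cases uniformly via Jensen's inequality rather than treating $p=2$ separately by Cauchy--Schwarz, but this is only a cosmetic difference.
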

\begin{proof}
    Noting that $V(\cdot; \mu) \geq 0$, in view of \eqref{eq: V mu Ebeta} we arrive at
    \begin{align}\label{eq: Vmu upper bound}
        0 \leq V(t;\mu) \leq \int_{[0,t]}E_{\beta}(t-s)\mu(ds),\qquad t > 0.
    \end{align}
    Using the Jensen inequality 
    $\left(\int_{[0,t]}E_{\beta}(t-s)\mu(ds)\right)^p \leq \mu(\R_+)^{p-1}\int_{[0,t]}E_{\beta}(t-s)^p \mu(ds)$ and then the Fubini theorem, we arrive at 
    \begin{align*}
        \int_T^{S} V(t;\mu)^p dt &\leq \int_T^{S}\left( \int_{[0,t]}E_{\beta}(t-s)\mu(ds)\right)^p dt
        \\ &\leq \mu(\R_+)^{p-1}\int_T^{S}\int_{[0,t]}E_{\beta}(t-s)^p \mu(ds)dt
        \\ &= \mu(\R_+)^{p-1}\bigg( \int_{[0,T]} \int_T^{S}E_{\beta}(t-s)^pdt \mu(ds) 
        \\ &\qquad \qquad \qquad \qquad \qquad \qquad  + \int_{(T,S)}\int_s^{S}E_{\beta}(t-s)^pdt \mu(ds)\bigg)
        \\ &= \mu(\R_+)^{p-1}\left( \int_{[0,T]} \int_{T-s}^{S-s}E_{\beta}(t)^pdt\mu(ds) + \int_{(T,S)}\int_0^{S-s}E_{\beta}(t)^pdt \mu(ds)\right). 
    \end{align*}
    This proves the assertion.
\end{proof}

Below we prove the asymptotic independence property \eqref{eq: asymptotic independence}.

\begin{Theorem}\label{thm: asymptotic independence}
 Suppose that conditions (K1) and (K2) are satisfied and that $\beta < 0$ holds. Then for each fixed $n \geq 1$, 
 \[
    \P_{b,\beta} \circ (X_{t_1}, \dots, X_{t_n})^{-1} \Longrightarrow \pi_{b,\beta}^{\otimes n} \ \text{ and } \ \P_{b,\beta}^{\mathrm{stat}} \circ (X_{t_1},\dots, X_{t_n})^{-1} \Longrightarrow \pi_{b,\beta}^{\otimes n}
 \]
 holds weakly locally uniformly in $(b,\beta) \in \R_+ \times (-\infty, 0)$ as $t_1,\dots, t_n \to \infty$ such that $\min_{k=1,\dots, n}|t_{k+1} - t_k| \to \infty$ and $0 < t_1 < \dots < t_n$.
\end{Theorem}
\begin{proof}
    Without loss of generality, assume that $\Theta = [0, b^*] \times [\beta_*, \beta^*] \subset \R_+ \times (-\infty, 0)$ for some $b^* \geq 0$ and $\beta_* < \beta^* < 0$. We prove that the Laplace transform converges uniformly on $\Theta$. The assertion is then a consequence of Theorem \ref{thm: uniform Laplace transform convergence}.
    
    \textit{Step 1.} Let us first prove the assertion for $\P_{b,\beta}$. Let $V = V(\cdot;\mu)$ be the unique solution of \eqref{eq: V mu} with $\mu$ given as in \eqref{eq: mu VCIR finite dimensional}. Using the affine transformation formula \eqref{eq: VCIR Laplace} we obtain
    \begin{align*}
    \E_{b,\beta}\left[ e^{- \sum_{k=1}^n u_k X_{t_k}}\right]
    &= \E_{b,\beta}\left[ e^{- \int_{[0,t_n]}X_{t_n-s}\mu(ds)}\right]
   \\ &= \exp\left( -x_0 \sum_{k=1}^n u_k - x_0\int_0^{t_n} R(V(s))ds - b \int_0^{t_n} V(s)ds\right)
    \\ &= \prod_{k=0}^{n-1} \exp\left( - x_0 u_k - x_0\int_{t_n - t_{k+1}}^{t_n - t_{k}} R(V(s))ds - b \int_{t_n-t_{k+1}}^{t_{n}-t_{k}} V(s)ds\right)
    \\ &= \prod_{k=0}^{n-1} \exp\left( - x_0 u_k - x_0\int_{0}^{t_{k+1} - t_{k}}R(V_k(s))ds - b \int_0^{t_{k+1} - t_k} V_k(s)ds\right)
    \end{align*}
    where we have set $t_0 = 0$ and $V_k(s) = V(s + (t_n - t_{k+1}))$. Let $\overline{V}(\cdot;u)$ be the solution of \eqref{eq: V mu} with $\mu = u \delta_0$, $u \geq 0$. According to \eqref{eq: pi affine formula}, it suffices to show that, for each $k=0,\dots, n-1$,
    \begin{align}\label{eq: V convergence}
     \int_{0}^{t_{k+1} - t_{k}} (x_0R(V_k(s)) + bV_k(s))ds \longrightarrow \int_0^{\infty}\left( x_0R(\overline{V}(s;u_{k+1})) + b\overline{V}(s;u_{k+1})\right)ds
    \end{align}
    holds uniformly with respect to $(b,\beta) \in \Theta$. Note that, an application of Lemma \ref{lemma: V mu Ebeta} to $\overline{V}$ and $S = + \infty$ gives for each $T \geq 0$
    \begin{align}\label{eq: V hat L2 bound}
     \int_T^{\infty} \left(\overline{V}(\tau; u) + \overline{V}(\tau; u)^2 \right)d\tau \leq u \int_T^{\infty}E_{\beta^*}(\tau)d\tau + u^2 \int_T^{\infty}E_{\beta^*}(\tau)^2 d\tau < \infty
    \end{align}
    where we have used that $E_{\beta} \leq E_{\beta^*}$, see \eqref{eq: Ebeta monotonicity}. Since $|R(x)| \leq |\beta|x + \frac{\sigma^2}{2}x^2$ for $x \geq 0$, the right-hand side in \eqref{eq: V convergence} is well defined. The property \eqref{eq: V convergence} will be shown in the subsequent steps 2 -- 4, while the assertion for $\P_{b,\beta}^{\mathrm{stat}}$ will be shown in step 5.

    \textit{Step 2.} Recall that $\mu$ is given as in \eqref{eq: mu VCIR finite dimensional}, whence $\mu(\R_+) = \sum_{j=1}^n u_j$. Using Lemma \ref{lemma: V mu Ebeta} with $T = 0$ gives for $k=0,\dots, n-1$ and $p = 1,2$
    \begin{align*}
        \int_0^{\infty}V_k(\tau)^p d\tau 
        &= \int_{t_n - t_{k+1}}^{\infty} V(\tau)^p d\tau
        \\ &\leq \int_0^{\infty} V(\tau)^p d\tau 
        \leq \left( \sum_{j=1}^{n}u_j \right)^p \int_0^{\infty}E_{\beta}(t)^p dt.
    \end{align*}
    Moreover, if $0 < T < t_{k+1} - t_{k}$, we obtain again from Lemma \ref{lemma: V mu Ebeta} applied to $S = t_{n} - t_k$
    \begin{align*}
     \int_T^{t_{k+1} - t_k} V_k(\tau)^p d\tau
     &= \int_{T+t_n - t_{k+1}}^{t_{n} - t_k} V(\tau)^p d\tau 
     \\ &\leq \left( \sum_{j=1}^n u_j\right)^{p-1} \int_{[0,T + t_n - t_{k+1}]} \int_{T + t_n - t_{k+1}-s}^{t_{n} - t_k - s}E_{\beta}(t)^pdt\mu(ds) 
     \\ &\qquad + \left( \sum_{j=1}^n u_j\right)^{p-1}\int_{(T + t_n - t_{k+1},t_n - t_k)}\int_0^{t_n - t_{k}-s}E_{\beta}(t)^pdt \mu(ds).
    \end{align*}
    
    Using the particular form of $\mu$, we see that $\mu((T+t_n - t_{k+1}, t_n - t_k))  \neq 0$ if and only if $T + t_n - t_{k+1} < t_n - t_l$ and $t_n - t_l < t_n - t_k$ holds for some $l = 1,\dots, n$. The second condition gives $l > k$ while the first condition is equivalent to $0 < T < t_{k+1} - t_l$ and hence is never satisfied for $l > k$. Hence $\mu((T+t_n - t_{k+1}, t_n - t_k)) = 0$ and the second term vanishes. For the first term, we obtain
    \begin{align*}
        &\ \int_{[0,T + t_n - t_{k+1}]} \int_{T + t_n - t_{k+1}-s}^{t_{n} - t_k - s}E_{\beta}(t)^pdt\mu(ds)
        \\ &= \sum_{l=1}^n u_l \1_{ \{ t_n - t_l \leq T + t_n - t_{k+1}\} }\int_{T + (t_l - t_{k+1})}^{t_l - t_k}E_{\beta}(\tau)^pd\tau
        \\ &= \sum_{l=k+1}^n u_l \int_{T + (t_l - t_{k+1})}^{t_l - t_k}E_{\beta}(\tau)^pd\tau
        \leq \left(\sum_{j=1}^{n}u_j \right)\int_T^{\infty}E_{\beta}(\tau)^pd\tau
    \end{align*}
    where we have used that for $l \leq k$ one has $T + t_l - t_{k+1} \leq T - (t_{k+1}-t_k) < 0$. To summarise, we have shown that
    \begin{align}\label{eq: Vk L2 bound}
    \int_T^{t_{k+1} - t_k} V_k(\tau)^p d\tau \leq \left( \sum_{j=1}^n u_j \right)^{p}\int_{T}^{\infty}E_{\beta}(\tau)^p d\tau
    \leq \left( \sum_{j=1}^n u_j \right)^{p}\int_{T}^{\infty}E_{\beta^*}(\tau)^p d\tau
    \end{align}
    holds for $p = 1,2$ and each $0 \leq T < t_{k+1} - t_k$.

    \textit{Step 3.} In this step, we prove that, for each $T > 0$ and $k = 0, \dots, n-1$, we have
    \begin{align}\label{eq: Vk convergence}
     \sup_{(b,\beta) \in \Theta}\int_0^T |V_k(\tau) - \overline{V}(\tau;u_{k+1})|^2 d\tau \longrightarrow 0
    \end{align}
    as $t_1,\dots, t_n \to \infty$ with $|t_{k+1} - t_k| \to \infty$.

    Fix $T > 0$, $k = 0, \dots, n-1$ and let $\tau \in [0,T] \subset [0, t_{k+1}-t_k)$. Then $\tau + (t_{n} - t_{k+1}) \in [t_n - t_{k+1}, t_n - t_k)$ and hence the function $V_k$ satisfies according to \eqref{eq: V mu Ebeta}
    \begin{align}\label{eq: equation for Vk}
     V_k(\tau) &= \sum_{j=k+1}^{n}u_j E_{\beta}(\tau + (t_{n} - t_{k+1}) - (t_n -  t_j)) 
     \\ &\qquad - \frac{\sigma^2}{2} \int_0^{\tau + (t_{n}-t_{k+1})} E_{\beta}(\tau + (t_{n}-t_{k+1}) - s)V(s)^2 ds \notag
     \\ &= \sum_{j=k+1}^{n}u_j E_{\beta}(\tau - (t_{k+1} - t_j)) 
     - \frac{\sigma^2}{2} \int_{-(t_n - t_{k+1})}^{\tau} E_{\beta}(\tau - s)V_k(s)^2 ds \notag
     \\ &= u_{k+1}E_{\beta}(\tau) - \frac{\sigma^2}{2}\int_0^{\tau}E_{\beta}(\tau - s)V_k(s)^2 ds + A_{k}(\tau) \notag
    \end{align}
    where we have set
    \[
     A_k(\tau) = \sum_{j=k+2}^{n}u_j E_{\beta}(\tau - (t_{k+1} - t_j)) - \frac{\sigma^2}{2}\int_0^{t_n - t_{k+1}} E_{\beta}(\tau+s)V(t_n - t_{k+1} - s)^2ds.
    \]
    
    Using \eqref{eq: V hat L2 bound} combined with the first bound from step 2, we find that 
    \begin{align}\label{eq: L2 bound}
     \| V_k + \overline{V}(\cdot; u_{k+1}) \|_{L^2(\R_+)}^2
     &\leq 2 \| V_k \|_{L^2(\R_+)}^2 + 2 \|\overline{V}(\cdot; u_{k+1})\|_{L^2(\R_+)}^2
     \\ &\leq 2 \left( \sum_{j=1}^{n}u_j\right)^2 \|E_{\beta^*}\|_{L^2(\R_+)}^2 + 2 u_{k+1}^2 \|E_{\beta^*}\|_{L^2(\R_+)}^2 =: C. \notag
    \end{align}
    Using \eqref{eq: equation for Vk} combined with representation \eqref{eq: V mu Ebeta} applied to $\overline{V}(\cdot; u_{k+1})$, and then the Cauchy-Schwartz inequality we arrive at
    \begin{align*}
       &\ |V_k(\tau) - \overline{V}(\tau;u_{k+1})|^2 
       \\ &\leq 2A_k(\tau)^2 + 2 \left(\frac{\sigma^2}{2}\right)^2\left(\int_0^{\tau}E_{\beta^*}(\tau-s)(V_k(s)^2 - \overline{V}(s;u_{k+1})^2)ds \right)^2
        \\ &\leq 2A_k(\tau)^2 + \frac{\sigma^4C}{2} \int_0^{\tau}E_{\beta^*}(\tau-s)^2|V_k(s) - \overline{V}(s;u_{k+1})|^2ds.
    \end{align*}
    Set $\rho(t) = \frac{\sigma^4C}{2}E_{\beta^*}(t)^2$ and let $\varkappa \in L_{loc}^1(\R_+)$ be the unique solution of $\varkappa(t) = \rho(t) + \int_0^t \rho(t-s)\varkappa(s)ds$. Using \cite[Proposition 9.8.1]{MR1050319} we find that $\varkappa \geq 0$ and using a Volterra analogue of the Gronwall inequality (see e.g. \cite[Theorem A.2]{MR4019885}), gives
    \[
    |V_k(\tau) - \overline{V}(\tau;u_{k+1})|^2 \leq 2A_k(\tau)^2 +  2\int_0^{\tau}\varkappa(\tau - r)A_k(r)^2dr.
    \] 
    Integrating over $[0,T]$ and then using Young's inequality in $L^1([0,T])$ gives
    \begin{align*}
    \int_0^T |V_k(\tau) - \overline{V}(\tau; u_{k+1})|^2 d\tau 
    &\leq 2\int_0^T |A_k(\tau)|^2 d\tau + 2\int_0^T \int_0^{\tau}\varkappa(\tau - r)A_k(r)^2dr d\tau 
    \\ &\leq 2\left( 1 + \int_0^T \varkappa(\tau)d\tau\right)\int_0^T |A_k(\tau)|^2 d\tau.
    \end{align*}
    Thus, \eqref{eq: Vk convergence} is proven, once we have shown that $\sup_{(b,\beta) \in \Theta}\int_0^T |A_k(\tau)|^2 d\tau \longrightarrow 0$. For this purpose, we first note that 
    \begin{align*}
    \int_0^T |A_k(\tau)|^2 d\tau &\leq 2n\sum_{j=k+2}^n u_j^2 \int_0^T E_{\beta^*}(\tau + (t_j - t_{k+1}))^2 d\tau 
    \\ &\qquad + \frac{\sigma^4}{2}\int_0^T \left( \int_0^{t_n - t_{k+1}}E_{\beta^*}(\tau+s)V(t_n - t_{k+1}-s)^2ds \right)^2 d\tau
    \end{align*}
    where we have used $E_{\beta} \leq E_{\beta^*}$. The first term converges to zero by dominated convergence and
    \[
     \int_0^T E_{\beta^*}(\tau + (t_j - t_{k+1}))^2 d\tau
     = \int_{t_j - t_{k+1}}^{T + t_j - t_{k+1}} E_{\beta^*}(\tau)^2 d\tau  
     \longrightarrow 0.
    \]
    The second term equals zero when $k + 1 = n$. Thus, let us assume that $k +1 < n$. Then using the substitution $t_n - t_{k+1} - s = r$ we obtain
    \begin{align*}
     &\ \int_0^T \left( \int_0^{t_n - t_{k+1}}E_{\beta^*}(\tau+s)V(t_n - t_{k+1}-s)^2ds \right)^2 d\tau
     \\ &= \int_0^T \left( \int_0^{t_n - t_{k+1}} E_{\beta^*}(\tau + t_n - t_{k+1} - r)V(r)^2dr \right)^2 d\tau
     \\ &\leq \int_0^T (E_{\beta^*} \ast V^2)(\tau + (t_n-t_{k+1}))^2 d\tau
     = \int_{t_n-t_{k+1}}^{T + t_n - t_{k+1}} (E_{\beta^*} \ast V^2)(\tau)^2 d\tau.
    \end{align*}
    Using \eqref{eq: Vmu upper bound} combined with $E_{\beta} \leq E_{\beta^*}$ we obtain 
    \begin{align}\label{eq: V square}
     V(t) \leq \left( \sum_{j=1}^n u_j\right) \int_{[0,t]} E_{\beta^*}(t-s) \mu(ds)
    \end{align}
    with the right-hand side being square-integrable in $t$. Hence
    \[
     \sup_{(b,\beta) \in \Theta}\int_{t_n-t_{k+1}}^{T + t_n - t_{k+1}} (E_{\beta^*} \ast V^2)(\tau)^2 d\tau \longrightarrow 0
    \]
    by dominated convergence, since $E_{\beta^*} \in L^2(\R_+)$ and $V^2$ has uniform majorant \eqref{eq: V square} in $L^1(\R_+)$. This proves \eqref{eq: Vk convergence} and hence completes the proof of step 3.

    \textit{Step 4.} Let us now show that \eqref{eq: Vk convergence} implies \eqref{eq: V convergence}. Indeed, take $T > 0$ and let $0 \leq t_1 < \dots < t_n$ be such that $|t_{k+1} - t_k| > T$ for all $k = 0, \dots, n-1$ with $t_0 = 0$. Taking the difference in \eqref{eq: V convergence}, we obtain 
    \begin{align*}
        &\ \left| \int_0^{\infty}(x_0 R(\overline{V}(s;u_{k+1})) + b \overline{V}(s,u_{k+1}))ds - \int_0^{t_{k+1} - t_k}(x_0 R(V_k(s)) + bV_k(s))ds \right|
        \\ &\leq x_0 \int_0^T |R(\overline{V}(s;u_{k+1})) - R(V_k(s))|ds
         + b \int_0^T |\overline{V}(s;u_{k+1}) - V_k(s)|ds
         \\ &\qquad + \int_T^{t_{k+1}- t_k} \left( x_0 |R(V_k(s))| + bV_k(s) \right) ds
        \\ &\qquad + \int_{T}^{\infty} \left( x_0|R(\overline{V}(s;u_{k+1}))| + b \overline{V}(s;u_{k+1}) \right)ds
        \\ &\leq (|\beta|x_0 + b) \int_0^T |\overline{V}(s;u_{k+1}) - V_k(s)|ds + \frac{\sigma^2\sqrt{C}x_0}{2}\| \overline{V}(\cdot;u_{k+1}) - V_k\|_{L^2([0,T])}
        \\ &\qquad + (|\beta|x_0 + b)\int_T^{t_{k+1}-t_k}V_k(s)ds + \frac{\sigma^2x_0}{2}\int_T^{t_{k+1}-t_k}V_k(s)^2 ds
        \\ &\qquad + (|\beta|x_0 + b) \int_T^{\infty} \overline{V}(s;u_{k+1}) ds + \frac{\sigma^2x_0}{2}\int_T^{\infty} \overline{V}(s;u_{k+1})^2 ds
    \end{align*}
    where we have used $|R(x) - R(y)| \leq |\beta||x-y| + \frac{\sigma^2}{2}(x+y)|x-y|$ and
    $|R(x)| \leq |\beta|x + \frac{\sigma^2}{2}x^2$ for $x,y \geq 0$, and also \eqref{eq: L2 bound} to bound $\| \overline{V}(\cdot;u_{k+1}) + V_k\|_{L^2([0,T])} \leq \sqrt{C}$. Because of \eqref{eq: Vk convergence}, the first two terms tend to zero. The third and fourth terms converge by \eqref{eq: Vk L2 bound} to zero as $T \to \infty$. Finally, the remaining two terms tend to zero due to \eqref{eq: V hat L2 bound} as $T \to \infty$. This proves the assertion for $X$.

    \textit{Step 5.} Finally, we prove the same assertion for the stationary process. Using the same $\mu$ and $V = V(\cdot; \mu)$, we find that
    \begin{align*}
        \E_{b,\beta}^{\mathrm{stat}}\left[ e^{- \sum_{k=1}^n u_k X_{t_k}} \right]
        &= \exp\left( - x_0 \sum_{k=1}^{n}u_k - x_0\int_0^{\infty}R(V(s))ds - b \int_0^{\infty}V(s)ds \right)
        \\ &= \prod_{k=0}^{n-1} \exp\left( - x_0 u_k - x_0\int_{0}^{t_{k+1} - t_{k}} R(V_k(s))ds - b \int_0^{t_{k+1} - t_k} V_k(s)ds\right)
        \\ &\qquad \qquad \cdot \exp\left( - x_0\int_{t_n}^{\infty}R(V(s))ds - b \int_{t_n}^{\infty}V(s)ds \right).
    \end{align*} 
    In steps 2 -- 4, we have already seen that 
    \begin{align*}
     &\ \prod_{k=0}^{n-1} \exp\left( - x_0 u_k - x_0\int_{0}^{t_{k+1} - t_{k}} R(V_k(s)) ds - b \int_0^{t_{k+1} - t_k} V_k(s)ds\right)
     \\ &\qquad \longrightarrow \prod_{k=0}^{n-1} \exp\left( - x_0 u_k - x_0\int_{0}^{\infty} (R(\overline{V}(s;u_{k+1}))ds - b \int_0^{\infty} \overline{V}(s;u_{k+1})ds \right)
    \end{align*}
    holds uniformly on $\Theta$. Using the particular form of $R$, the assertion follows for $p=1,2$ from
    \begin{align*}
        \int_{t_n}^{\infty} V(s)^p ds 
        &\leq \left(\sum_{j=1}^n u_j\right)^{p-1}\int_{[0,t_n]}\int_{t_n-s}^{\infty}E_{\beta^*}(t)^p dt \mu(ds)
        \\ &\leq \left(\sum_{j=1}^n u_j\right)^{p-1} \sum_{l=1}^n u_l \int_{t_j}^{\infty}E_{\beta^*}(t)^p dt
        \leq \left(\sum_{j=1}^n u_j\right)^{p} \int_{t_1}^{\infty}E_{\beta^*}(t)^p dt
    \end{align*}
    where we have used Lemma \ref{lemma: V mu Ebeta} and that $\mu((t_n,\infty)) = 0$.
\end{proof}

\subsection{Law-of-large numbers and ergodicity}

In this section, we first derive the Law of Large Numbers as stated in \eqref{eq: Birkhoff VOU} for $X$ and the stationary process $X^{\mathrm{stat}}$. The following is our main result.

\begin{Theorem}\label{thm: law of large numbers VCIR}
 Suppose that conditions (K1) and (K2) are satisfied, $\beta < 0$ and that $\sigma > 0$. Then the following assertions hold:
 \begin{enumerate}
     \item[(a)] If $f: [0, \infty) \longrightarrow \R$ is continuous and polynomially bounded, then \eqref{eq: Birkhoff VOU} holds in $L^p(\Omega, \P_{b,\beta})$ for each $p \in [2,\infty)$ and all $(b,\beta) \in \R_+ \times (-\infty, 0)$. If $f$ is additionally uniformly continuous, then this convergence also holds locally uniformly in $(b,\beta)$.
          
     \item[(b)] Suppose that $f: [0,\infty) \longrightarrow \R$ is Lebesgue almost everywhere continuous on $(0,\infty)$, has polynomial growth, and there exists $0 < \lambda \leq \frac{3\gamma}{2}$ such that 
     \begin{align}\label{eq: K lower bound}
        \inf_{h \in [0,1]}h^{-\lambda}\int_0^h K(t)^2 dt > 0.
    \end{align}
    If $f$ is continuous in zero or $\pi_{b,\beta}(\{0\}) = 0$, then \eqref{eq: Birkhoff VOU} holds in $L^p(\Omega, \P_{b,\beta})$ for each $p \in [2,\infty)$ and all $(b,\beta) \in \R_+ \times (-\infty, 0)$. 

     \item[(c)] Suppose there exists $0 < \lambda \leq \frac{3\gamma}{2}$ such that \eqref{eq: K lower bound} holds. Let $\Theta \subset \R_+ \times (-\infty, 0)$ be a compact such that 
     \begin{align}\label{eq: pi boundary}
        \lim_{\eta \searrow 0}\sup_{(b,\beta) \in \Theta}\pi_{b,\beta}(\{|x| \leq \eta\}) = 0.
     \end{align}
     Let $f: [0,\infty) \longrightarrow \R$ be Lebesgue almost everywhere continuous on $(0,\infty)$ satisfying for some $p \in [1,\infty)$ 
     \[
        \lim_{R \to \infty}\sup_{(b,\beta) \in \Theta}\left(\sup_{t \geq 0}\ \E_{b,\beta}\left[|f(X_t)|^p \1_{\{|f(X_t)| > R\}}\right] + \int_{\R_+} |f(x)| \1_{\{|f(x)|> R\}}\pi_{b,\beta}(dx)\right) = 0.
     \] 
     Then \eqref{eq: Birkhoff VOU} holds in $L^p(\Omega, \P_{b,\beta})$ for this particular choice of $p$ uniformly in $(b,\beta) \in \Theta$. 
 \end{enumerate} 
\end{Theorem}
\begin{proof}
 The assertion is a consequence of the Birkhoff Lemma \cite[Lemma 2.1]{BFK24} applied to $Y_t = f(X_t)$. Indeed, it follows from the asymptotic independence that $X_t \Longrightarrow \pi_{b,\beta}$ and $(X_t,X_s) \Longrightarrow \pi_{b,\beta} \otimes \pi_{b,\beta}$ locally uniformly in $(b,\beta)$. In case (a), we may apply the continuous mapping theorem for fixed $(b,\beta)$ to conclude that $Y$ satisfies the assumptions of the Birkhoff Lemma \cite[Lemma 2.1]{BFK24}. When $f$ is also uniformly continuous, the same argument can be used with the uniform continuous mapping Theorem \ref{thm: uniform CMT} instead. For case (b), the assertion follows by the usual continuous mapping theorem with $(b,\beta)$ fixed, provided that $\pi_{b,\beta}(D) = 0$ where $D$ denotes the set of all continuity points of $f$. The latter property is satisfied since by \cite[Section 6]{FJ22} the limit distribution $\pi_{b,\beta}$ is when restricted onto $(0,\infty)$ absolutely continuous with respect to the Lebesgue measure. 
 
 Finally, in case of condition (c), we may apply Theorem \ref{thm: uniform CMT}.(c) provided that the density of the limit distribution $\pi_{b,\beta}$ satisfies condition \eqref{eq: locally absolute continuity} with respect to the Lebesgue measure. To prove the latter, let us first observe that \eqref{eq: pi boundary} yields $\pi_{b,\beta}(\{0\}) = 0$ for $(b,\beta) \in \Theta$, and hence $\pi_{b,\beta}$ is absolutely continuous with respect to the Lebesgue measure on $\R_+$, see \cite[Section 6]{FJ22}. Thus, writing $\pi_{b,\beta}(x)dx = \pi_{b,\beta}(dx)$ and noting that $\pi_{b,\beta}(x) = 0$ for $x < 0$, the same reference gives $\sqrt{|\cdot|}\pi_{b,\beta} \in B_{1,\infty}^{\delta}(\R)$ for some $\delta \in (0,1)$ small enough. By using Proposition \ref{prop: VCIR technical stuff}, it is clear that the estimates given in \cite[Section 6]{FJ22} can be obtained uniformly on $\Theta$, which gives 
 $\sup_{(b,\beta) \in \Theta}\|\sqrt{|\cdot|}\pi_{b,\beta}\|_{B_{1,\infty}^{\delta}(\R)} < \infty$. Let $\e \in (0,1)$ and take $\eta > 0$ such that $\sup_{(b,\beta) \in \Theta}\pi_{b,\beta}(\{|x| \leq \eta\}) < \e/2$. Let $A \subset \R_+$ be a Borel set with $\lambda(A) < \infty$. Then we obtain $\pi_{b,\beta}(A) \leq \e/2 + \int_{\eta}^{\infty}\1_A(x) \pi_{b,\beta}(x)dx$. To bound the latter integral, we use the H\"older inequality with $p \in (1,1/(1-\delta))$ and then the continuous embedding $B_{1,\infty}^{\lambda}(\R) \hookrightarrow L^p(\R)$ to find 
 \begin{align*}
     \int_{\eta}^{\infty}\1_A(x) \pi_{b,\beta}(x)dx 
     &\leq \eta^{-1/2}\int_{\eta}^{\infty}\1_A(x) \sqrt{x}\pi_{b,\beta}(x)dx
     \\ &\leq \eta^{-1/2} \left(\int_{\eta}^{\infty}\1_A(x)dx \right)^{1 - \frac{1}{p}} \| \sqrt{|\cdot|}\pi_{b,\beta}\|_{L^p(\R)}
     \\ &\leq C(p, \delta)\eta^{-1/2} \lambda(A)^{1 - \frac{1}{p}} \sup_{(b,\beta) \in \Theta}\| \sqrt{|\cdot|}\pi_{b,\beta}\|_{B_{1,\infty}^{\delta}(\R)}.
 \end{align*}
 Thus we may choose $\lambda(A)$ small enough such that $\sup_{(b,\beta) \in \Theta}\pi_{b,\beta}(A) < \e$ which implies \eqref{eq: locally absolute continuity} and hence proves $f(X_t) \Longrightarrow \pi_{b,\beta}\circ f^{-1}$ uniformly on $\Theta$. Similarly, we can show that $(f(X_t), f(X_s)) \Longrightarrow (\pi_{b,\beta}\otimes \pi_{b,\beta})\circ (f,f)^{-1}$. 
 
 An application of \cite[Lemma 2.1]{BFK24} to $f_R(x) = f(x) \wedge R$ implies that \eqref{eq: Birkhoff VOU} holds for each $p \geq 0$. Let $\e > 0$. Then we obtain for $R > 0$ 
 \begin{align*}
     &\ \sup_{(b,\beta) \in \Theta}\left\| \frac{1}{t}\int_0^t f(X_s)ds - \int_{\R_+}f(x)\pi_{b,\beta}(dx)\right\|_{L^p(\Omega, \P_{b,\beta})}
     \\ &\leq \sup_{(b,\beta) \in \Theta} \frac{1}{t}\int_0^t \left\| f(X_s) \1_{\{|f(X_s)|>R\}} \right\|_{L^p(\Omega,\P_{b,\beta})} ds
     \\ &\qquad + \sup_{(b,\beta) \in \Theta}\left\| \frac{1}{t}\int_0^t f_R(X_s)ds - \int_{\R_+}f_R(x)\pi_{b,\beta}(dx)\right\|_{L^p(\Omega, \P_{b,\beta})}
     \\ &\qquad + \sup_{(b,\beta) \in \Theta}\int_{\R_+} |f(x)| \1_{\{|f(x)|> R\}}\pi_{b,\beta}(dx)
     \\ &\leq \e + \sup_{(b,\beta) \in \Theta}\left\| \frac{1}{t}\int_0^t f_R(X_s)ds - \int_{\R_+}f_R(x)\pi_{b,\beta}(dx)\right\|_{L^p(\Omega, \P_{b,\beta})}
 \end{align*}
 where, by assumption, we may choose $R$ large enough independent of $t$ such that the first and third term are bounded by $\e/2$. Thus letting first $t \to \infty$ and then $\e \searrow 0$ proves the assertion.  
\end{proof}

Since merely distributional properties have been used in the above proof, which are also satisfied by the stationary process with law $\P_{b,\beta}^{\mathrm{stat}}$, all assertions of Theorem \ref{thm: law of large numbers VCIR} also hold for the stationary process.

Finally, we show that the VCIR process with stationary law $\P_{b,\beta}^{\mathrm{stat}}$ is ergodic. The latter results complement the existing literature where analogous results are usually obtained for Markov processes in terms of convergence of transition probabilities in the total variation distance, see e.g. \cite{BDF23, MR4567723, MR4585666, MR3343292, MR4102262} for related results on classical affine processes.  

\begin{Corollary}\label{cor: VCIR mixing}
 Suppose that conditions (K1) and (K2) are satisfied, $\beta < 0$ and that $\sigma > 0$. Then the stationary VCIR process $X^{\mathrm{stat}}$ with law $\P_{b,\beta}^{\mathrm{stat}}$ is ergodic.
\end{Corollary}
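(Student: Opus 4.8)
The plan is to establish the stronger property that $\P_{b,\beta}^{\mathrm{stat}}$ is \emph{mixing} under the time shift on $C(\R_+)$, which at once implies ergodicity. By the $L^2$-characterisation of mixing together with the fact that bounded continuous cylinder functionals are dense in $L^2(C(\R_+),\P_{b,\beta}^{\mathrm{stat}})$, it suffices to prove that for all $n,m\in\N$, all $0\le t_1<\dots<t_n$ and $0\le r_1<\dots<r_m$, the joint law of $(X_{t_1},\dots,X_{t_n},X_{r_1+h},\dots,X_{r_m+h})$ under $\P_{b,\beta}^{\mathrm{stat}}$ converges weakly, as $h\to\infty$, to the product of the stationary finite-dimensional laws of $(X_{t_1},\dots,X_{t_n})$ and of $(X_{r_1},\dots,X_{r_m})$ (the second factor being independent of $h$ by shift invariance). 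By the continuity theorem for Laplace transforms on $\R_+^{\,n+m}$ this amounts to showing, for all $a_1,\dots,a_n\ge0$ and $v_1,\dots,v_m\ge0$,
\begin{align*}
 \E_{b,\beta}^{\mathrm{stat}}\Big[e^{-\sum_k a_k X_{t_k}-\sum_j v_j X_{r_j+h}}\Big]\ \longrightarrow\ \E_{b,\beta}^{\mathrm{stat}}\Big[e^{-\sum_k a_k X_{t_k}}\Big]\cdot\E_{b,\beta}^{\mathrm{stat}}\Big[e^{-\sum_j v_j X_{r_j}}\Big]\qquad (h\to\infty).
\end{align*}

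To prove this I would use the stationary affine transformation formula \cite[Theorem 5.4]{FJ22}: for $h$ large enough that $r_m+h>t_n$, the left-hand side equals $\exp\!\big(-x_0(\sum_k a_k+\sum_j v_j)-\int_0^\infty R(V(s))\,ds-b\int_0^\infty V(s)\,ds\big)$ with $V=V(\cdot;\mu_h)$, where $\mu_h=\sum_k a_k\delta_{c_h+(t_n-t_k)}+\sum_j v_j\delta_{r_m-r_j}$ and $c_h:=r_m+h-t_n\to\infty$. Thus the atoms of $\mu_h$ split into a fixed cluster $\mu^{(v)}:=\sum_j v_j\delta_{r_m-r_j}$ supported in $[0,r_m]$ and a cluster supported in $[c_h,c_h+t_n-t_1]$ that recedes to $+\infty$. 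Since \eqref{eq: V mu Ebeta} is a causal Volterra equation, $V$ restricted to $[0,c_h)$ depends on $\mu_h$ only through $\mu_h|_{[0,c_h)}=\mu^{(v)}$, so by uniqueness $V\equiv V^{(v)}:=V(\cdot;\mu^{(v)})$ on $[0,c_h)$, and $V^{(v)}\in L^1(\R_+)\cap L^2(\R_+)$ by Lemma \ref{lemma: V mu Ebeta}. Performing the substitution $\tau=s-c_h$ in \eqref{eq: V mu Ebeta}, one computes as for \eqref{eq: equation for Vk} that $W(\tau):=V(\tau+c_h)$ solves
\begin{align*}
 W(\tau)=\sum_{k:\,t_n-t_k\le\tau}a_k E_\beta\big(\tau-(t_n-t_k)\big)-\frac{\sigma^2}{2}\int_0^\tau E_\beta(\tau-\xi)W(\xi)^2\,d\xi+B_h(\tau),
\end{align*}
with $B_h(\tau)=\sum_j v_j E_\beta(\tau+c_h-r_m+r_j)-\frac{\sigma^2}{2}\int_0^{c_h}E_\beta(\tau+c_h-s)V^{(v)}(s)^2\,ds$; since $E_\beta\in L^2(\R_+)$ and $E_\beta\ast(V^{(v)})^2\in L^2(\R_+)$ by Young's inequality, $\|B_h\|_{L^2([0,T])}\to0$ for every $T>0$.

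From here I would carry out the decoupling exactly as in Steps 3--4 of the proof of the asymptotic independence theorem above. Comparing $W$ with $\overline{W}:=V\big(\cdot;\sum_k a_k\delta_{t_n-t_k}\big)$, which solves the same equation without the forcing $B_h$, using that the source terms coincide and that $\|W\|_{L^2(\R_+)}$ and $\|\overline{W}\|_{L^2(\R_+)}$ are bounded uniformly in $h$ via Lemma \ref{lemma: V mu Ebeta} (because $\mu_h(\R_+)=\sum_k a_k+\sum_j v_j$ does not depend on $h$), the Volterra--Gronwall/resolvent argument gives $\|W-\overline{W}\|_{L^2([0,T])}\to0$ for each $T$. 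Splitting $\int_0^\infty(R(V)+bV)\,ds=\int_0^{c_h}(R(V^{(v)})+bV^{(v)})\,ds+\int_0^\infty(R(W)+bW)\,d\tau$ and estimating with $|R(x)-R(y)|\le|\beta||x-y|+\tfrac{\sigma^2}{2}(x+y)|x-y|$ and $|R(x)|\le|\beta|x+\tfrac{\sigma^2}{2}x^2$ together with the tail bounds of Lemma \ref{lemma: V mu Ebeta} for $V^{(v)},W,\overline{W}$, one finds that the first integral converges to $\int_0^\infty(R(V^{(v)})+bV^{(v)})\,ds$ and the second to $\int_0^\infty(R(\overline{W})+b\overline{W})\,d\tau$. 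Feeding this back into the affine formula, and recognising via \cite[Theorem 5.4]{FJ22} that $\exp(-x_0\sum_j v_j-\int_0^\infty(R(V^{(v)})+bV^{(v)})\,ds)$ and $\exp(-x_0\sum_k a_k-\int_0^\infty(R(\overline{W})+b\overline{W})\,d\tau)$ are exactly the stationary Laplace transforms of $(X_{r_1},\dots,X_{r_m})$ and of $(X_{t_1},\dots,X_{t_n})$, the displayed limit follows, which proves mixing and hence ergodicity.

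The step I expect to be the main obstacle is the decoupling of the Riccati solution into the contributions of two well-separated clusters of atoms: one must show that near each cluster $V$ is governed, in the limit, by that cluster alone, with the influence of the other cluster absorbed into an $L^2$-negligible forcing. The favourable feature here is that the causality of \eqref{eq: V mu Ebeta} makes the fixed near-origin cluster entirely transparent ($V\equiv V^{(v)}$ on $[0,c_h)$), so the actual work is confined to the receding cluster, where one reruns the analysis already developed for \eqref{eq: equation for Vk}--\eqref{eq: Vk convergence} with the genuinely multi-atom measure $\sum_k a_k\delta_{t_n-t_k}$ in place of a single atom $u\,\delta_0$; all the quantitative inputs (the $\int V^p$ bounds of Lemma \ref{lemma: V mu Ebeta}, the Volterra--Gronwall estimate, and the decay and integrability of $E_\beta$ from Proposition \ref{prop: Ebeta uniform bounds}) are already available. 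A minor point to handle with care is that the tail terms $\int_T^\infty(|R(W)|+bW)\,d\tau$ and $\int_T^\infty(|R(\overline{W})|+b\overline{W})\,d\tau$ must be made small uniformly in $h$ by choosing $T$ large --- using that $\mu_h$ is supported in $[0,c_h+t_n-t_1]$ and $E_\beta\in L^1(\R_+)\cap L^2(\R_+)$ --- before letting $h\to\infty$ on the compact part $[0,T]$.
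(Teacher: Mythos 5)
Your proposal is correct, but it takes a genuinely different route from the paper. The paper's proof is soft: it applies the classical Birkhoff ergodic theorem under $\P_{b,\beta}^{\mathrm{stat}}$, identifies the a.s.\ limit of $\frac{1}{t}\int_0^t f(X_s)ds$ with the constant $\int_{\R_+} f\,d\pi_{b,\beta}$ by invoking the already established law of large numbers (Corollary \ref{thm: law of large numbers VCIR} for the stationary process), deduces $\P_{b,\beta}^{\mathrm{stat}}[X_h \in \cdot \mid \mathcal{I}] = \pi_{b,\beta}$ for every $h \geq 0$, and concludes that the shift-invariant $\sigma$-algebra $\mathcal{I}$ is independent of $\vee_{h \geq 0}\sigma(X_h) \supset \mathcal{I}$, hence trivial; no new Volterra estimates are needed. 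You instead prove the strictly stronger mixing property by re-running the Riccati analysis, and this extra work is genuinely necessary for your route: the asymptotic independence theorem of the paper requires \emph{all} gaps between observation times to diverge, whereas mixing keeps the internal gaps of the two blocks fixed, so it cannot simply be cited. Your two-cluster decomposition of $\mu_h$, the causality identification $V \equiv V(\cdot;\mu^{(v)})$ on $[0,c_h)$, the shifted equation for $W$ with the $L^2$-negligible forcing $B_h$, and the Gronwall comparison with the multi-atom limit $V(\cdot;\sum_k a_k \delta_{t_n-t_k})$ are all sound, and the quantitative inputs you need (Lemma \ref{lemma: V mu Ebeta} applied to $\mu_h$, whose total mass is $h$-independent, tail bounds uniform in $h$ once $T > t_n - t_1$, and $E_\beta \in L^1(\R_+)\cap L^2(\R_+)$) are available verbatim from the paper; the reduction of mixing to bounded continuous cylinder functionals is standard since the shift preserves $\P_{b,\beta}^{\mathrm{stat}}$. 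What each approach buys: the paper's argument is much shorter given Corollary \ref{thm: law of large numbers VCIR}, while yours yields the stronger conclusion that $\P_{b,\beta}^{\mathrm{stat}}$ is mixing (not merely ergodic), avoids the conditional-expectation argument on $\mathcal{I}$, and does not rely on the law of large numbers at all.
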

\begin{proof}
 An application of the classical Birkhoff theorem to $\P_{b,\beta}^{\mathrm{stat}}$ implies that for each $f \in L^p(\R, \pi_{b,\beta})$, one has $\frac{1}{t}\int_0^t f(X_s)ds \longrightarrow \E_{b,\beta}^{\mathrm{stat}}[f(X_0)\ | \ \mathcal{I}]$ a.s. with respect to $\P_{b,\beta}^{\mathrm{stat}}$ where $\mathcal{I}$ is the shift-invariant sigma-algebra. Let $A \in \mathcal{I}$. Using the stationarity of the process and the shift-invariance of $A$, we obtain $\E_{b,\beta}^{\mathrm{stat}}[ f(X_0)\1_{A}] = \E_{b,\beta}^{\mathrm{stat}}[ f(X_h)\1_{A}]$ and hence $\E_{b,\beta}^{\mathrm{stat}}[f(X_0)\ | \ \mathcal{I}] = \E_{b,\beta}^{\mathrm{stat}}[f(X_h)\ | \ \mathcal{I}]$ for any $h \geq 0$. Using the previously shown law of large numbers and Birkhoff's ergodic theorem, the uniqueness of the limits implies that $\int_{\R}f(x) \pi_{b,\beta}(dx) = \E_{b,\beta}^{\mathrm{stat}}[f(X_h) \ | \ \mathcal{I}]$ holds $\P_{b,\beta}^{\mathrm{stat}}\text{-a.s.}$ for each $h \geq 0$. Since this relation holds for all $f: \R \longrightarrow \R$ continuous and bounded, we obtain $\pi_{b,\beta}(A) = \P_{b,\beta}^{\mathrm{stat}}[X_h \in A \ | \ \mathcal{I}]$ for all Borel sets $A$. Hence we obtain for a Borel set $A \subset \R$ and $B \in \mathcal{I}$
 \begin{align*}
  \E_{b,\beta}^{\mathrm{stat}}\left[ \1_{\{X_h \in A\}} \1_B \right]
  &= \E_{b,\beta}^{\mathrm{stat}}\left[ \P_{b,\beta}^{\mathrm{stat}}\left[ X_h \in A \ | \ \mathcal{I}\right] \1_B\right]
  \\ &= \pi_{b,\beta}(A) \P_{b,\beta}^{\mathrm{stat}}[B]
  \\ &= \P_{b,\beta}^{\mathrm{stat}}[X_0 \in A]\P_{b,\beta}^{\mathrm{stat}}[B]
  = \P_{b,\beta}^{\mathrm{stat}}[X_h \in A]\P_{b,\beta}^{\mathrm{stat}}[B].
 \end{align*}
 Hence for each $h \geq 0$, $X_h$ is independent of $\mathcal{I}$ with respect to $\P_{b,\beta}^{\mathrm{stat}}$, and thus $\vee_{h \geq 0}\sigma(X_h)$ is independent of $\mathcal{I}$. Since $\mathcal{I} \subset \vee_{h \geq 0}\sigma(X_h)$, we conclude that $\mathcal{I}$ is independent of itself and hence trivial. This proves that $\P_{b,\beta}^{\mathrm{stat}}$ is ergodic.
\end{proof}

\subsection{Method of moments for fractional kernel}

In this part, we apply the law of large numbers to prove consistency for the estimators obtained from the method of moments. To allow for explicit computations, below we consider the case of the fractional kernel \eqref{eq: fractional kernel}. Define $m_1(T), m_2(T)$ by
\[
 m_1(T) = \frac{1}{T}\int_0^T X_t\tau(dt)\ \text{ and } \
 m_2(T) = \frac{1}{T}\int_0^T X_t^2 \tau(dt)
\]
where $\tau(dt)$ is either the counting measure on $\N_0$ or the Lebesgue measure on $\R_+$. Then $m_1(T) \longrightarrow m_1(b,\beta)$ and $m_2(T) \longrightarrow m_2(b,\beta)$ by Proposition \ref{prop: VCIR technical stuff} with 
\begin{align*}
    m_1(b,\beta) = \frac{b}{|\beta|} \ \ \text{ and } \ \ m_2(b,\beta) = \left(\frac{b}{|\beta|}\right)^2 + b \sigma^2 C_{\alpha} |\beta|^{\frac{1}{\alpha} - 3}
\end{align*}
where we have used that $\int_0^{\infty}E_{\beta}(t)^2dt 
= C_{\alpha}|\beta|^{\frac{1}{\alpha} - 2}$ with the constant 
\[
 C_{\alpha} = \frac{1}{\pi}\int_0^{\infty} \frac{du}{1 + 2u^{\alpha}\cos(\frac{\pi \alpha}{2}) + u^{2\alpha}},
\]
which follows from the explicit formula $E_{\beta}(t) = t^{\alpha - 1}E_{\alpha,\alpha}(\beta t^{\alpha})$ with $E_{\alpha,\alpha}$ the two-parameter Mittag-Leffler function.
Solving these equations gives
 \begin{align*}
     \widehat{b}_T &= m_1(T) \left( \frac{C_{\alpha} \sigma^2 m_1(T)}{m_2(T) - m_1(T)^2} \right)^{\frac{\alpha}{2\alpha - 1}}
     \\ \widehat{\beta}_T &= -\left( \frac{C_{\alpha} \sigma^2 m_1(T)}{m_2(T) - m_1(T)^2} \right)^{\frac{\alpha}{2\alpha - 1}}.
 \end{align*}

The following result applies equally to continuous and discrete low-frequency observations.

\begin{Corollary}\label{cor: VCIR method of moments}
    Suppose that $K$ is given by the fractional kernel \eqref{eq: fractional kernel}, $\sigma > 0$ and $\beta < 0$. Then $(\widehat{b}_T, \widehat{\beta}_T)$ is consistent in probability locally uniform in $(b,\beta)$, i.e.
    \[
     \lim_{T \to \infty}\sup_{(b,\beta) \in \Theta}\P\left[ \left| (\widehat{b}_T, \widehat{\beta}_T) - (b,\beta)\right| > \e\right] = 0
    \]
    holds for each $\e > 0$ and any compact $\Theta \subset \R \times (-\infty, 0)$. Furthermore, if $(T_n)_{n \geq 1}$ is a sequence with $T_n \nearrow \infty$ and there exists $p \in [1,\infty)$ with 
    \[
     \sum_{n=1}^{\infty}\left| 1 - \frac{T_n}{T_{n+1}}\right|^p < \infty,
    \]
    then $(\widehat{b}_{T_n}, \widehat{\beta}_{T_n})$ is strongly consistent.
\end{Corollary}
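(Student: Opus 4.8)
The plan is to reduce the statement to a law-of-large-numbers for the first two empirical moments that is uniform in $(b,\beta)$, and then to compose with the (explicit, continuous) inverse of the theoretical moment map. Write $\Phi(b,\beta) = (m_1(b,\beta), m_2(b,\beta))$. Starting from $m_1(b,\beta) = b/|\beta|$, $m_2(b,\beta) - m_1(b,\beta)^2 = b\sigma^2 C_\alpha|\beta|^{1/\alpha - 3}$, $C_\alpha > 0$, and the fact that $\alpha/(2\alpha-1)$ is the reciprocal of $2 - 1/\alpha > 0$ (recall $\alpha \in (1/2,1]$), one checks that $\Phi$ is a homeomorphism of $(0,\infty)\times(-\infty,0)$ onto the open set $U := \{(y_1,y_2) : y_1 > 0,\ y_2 > y_1^2\}$, whose inverse $\Phi^{-1}\colon U \to (0,\infty)\times(-\infty,0)$ is precisely the map obtained from the displayed formulas for $(\widehat b_T, \widehat\beta_T)$ by substituting $(y_1,y_2)$ for $(m_1(T), m_2(T))$; in particular $\Phi^{-1}$ is continuous. (Throughout, $\Theta$ is a compact subset of $(0,\infty)\times(-\infty,0)$, the estimator being defined only for $b>0$.) Thus $(\widehat b_T, \widehat\beta_T) = \Phi^{-1}(m_1(T), m_2(T))$ whenever $(m_1(T), m_2(T)) \in U$, and everything comes down to controlling the convergence $(m_1(T), m_2(T)) \to \Phi(b,\beta)$.

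\emph{Step 1 (uniform LLN for the first two moments).} First I would show that $m_1(T) \to m_1(b,\beta)$ and $m_2(T) \to m_2(b,\beta)$ in $L^1(\Omega, \P_{b,\beta})$ uniformly in $(b,\beta)\in\Theta$. For $m_1$ this is immediate from Corollary~\ref{thm: law of large numbers VCIR}(a), since $f(x)=x$ is continuous, polynomially bounded and uniformly continuous on $[0,\infty)$, and $\int x\,\pi_{b,\beta}(dx) = m_1(b,\beta)$ (the uniform higher-moment bounds of Proposition~\ref{prop: VCIR technical stuff} give uniform integrability, hence convergence of $\E_{b,\beta}[X_t]$ to the first moment of $\pi_{b,\beta}$). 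For $m_2$ the map $x\mapsto x^2$ is not uniformly continuous, so I would truncate: $f_R(x)=x^2\wedge R$ is bounded and Lipschitz, so Corollary~\ref{thm: law of large numbers VCIR}(a) applies to it uniformly on $\Theta$, while $\sup_{t\geq0}\E_{b,\beta}[(X_t^2-R)_+]\leq R^{-1}\sup_{t\geq0}\E_{b,\beta}[X_t^4]$ and $\int (x^2-R)_+\,\pi_{b,\beta}(dx)\leq R^{-1}\int x^4\,\pi_{b,\beta}(dx)$ tend to $0$ as $R\to\infty$ uniformly on $\Theta$ by Proposition~\ref{prop: VCIR technical stuff}; sending $R\to\infty$ after $T\to\infty$ gives the claim. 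Both choices of $\tau$ are covered: the asymptotic independence theorem yields $(X_{t_1},\dots,X_{t_n})\Longrightarrow \pi_{b,\beta}^{\otimes n}$ along integer as well as real times, and \cite[Lemma~2.1]{BFK24} applies equally to the counting measure (for which one additionally uses $\lfloor T\rfloor/T\to1$).

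\emph{Step 2 (passing to the estimators).} Fix $\e>0$. As $\Phi(\Theta)$ is compact in the open set $U$, $\Phi^{-1}$ is uniformly continuous on a neighbourhood of $\Phi(\Theta)$ and $\operatorname{dist}(\Phi(\Theta),\partial U)>0$; hence there is $\delta>0$ such that $|(y_1,y_2)-\Phi(b,\beta)|<\delta$ forces $(y_1,y_2)\in U$ and $|\Phi^{-1}(y_1,y_2)-(b,\beta)|<\e$ for all $(b,\beta)\in\Theta$. On the event $\{|m_1(T)-m_1(b,\beta)|<\delta/2\}\cap\{|m_2(T)-m_2(b,\beta)|<\delta/2\}$ the pair $(m_1(T),m_2(T))$ lies in $U$, so $(\widehat b_T,\widehat\beta_T)=\Phi^{-1}(m_1(T),m_2(T))$ is defined and within $\e$ of $(b,\beta)$; hence $\sup_{(b,\beta)\in\Theta}\P[|(\widehat b_T,\widehat\beta_T)-(b,\beta)|>\e]$ is at most $\sup_{(b,\beta)\in\Theta}\P[|m_1(T)-m_1(b,\beta)|\geq\delta/2]+\sup_{(b,\beta)\in\Theta}\P[|m_2(T)-m_2(b,\beta)|\geq\delta/2]\to 0$ by Markov's inequality and Step~1. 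For the strong consistency, fix $(b,\beta)$: under $\sum_n|1-T_n/T_{n+1}|^p<\infty$, the $L^p$-bound $\|m_i(T_{n+1})-m_i(T_n)\|_{L^p(\P_{b,\beta})}\leq C|1-T_n/T_{n+1}|$ for $i=1,2$ (a consequence of the uniform moment bounds of Proposition~\ref{prop: VCIR technical stuff}) together with Borel--Cantelli shows that $(m_i(T_n))_n$ is Cauchy $\P_{b,\beta}$-a.s., with limit $m_i(b,\beta)$ by Step~1 — this is the mechanism behind \cite[Lemma~2.1]{BFK24}; since $(m_1(b,\beta),m_2(b,\beta))\in U$ and $\Phi^{-1}$ is continuous there, $(\widehat b_{T_n},\widehat\beta_{T_n})\to(b,\beta)$ $\P_{b,\beta}$-a.s.

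The only genuine work is Step~1 — especially the uniform LLN for the non-uniformly-continuous function $x\mapsto x^2$, where the truncation level $R$ must be balanced against $T$ using the uniform fourth-moment bound, and the bookkeeping so that the Birkhoff lemma is applied in a form covering both the continuous and the counting-measure averages. Everything downstream (the homeomorphism property of $\Phi$ and the continuous-mapping step) is elementary once the explicit estimator formulas are in hand.
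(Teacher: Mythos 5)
Your in-probability consistency argument is correct and is essentially the paper's route: the paper's one-line proof combines the uniform law of large numbers (Corollary~\ref{thm: law of large numbers VCIR}) with the uniform continuous mapping theorem (Theorem~\ref{thm: uniform CMT}(a)), and your Step~2 simply reproves that mapping step by hand (compactness of $\Phi(\Theta)$ inside the open set $U$, uniform continuity of $\Phi^{-1}$ near $\Phi(\Theta)$). Two of your additions are genuinely useful: the truncation $f_R(x)=x^2\wedge R$ together with the uniform fourth-moment bound of Proposition~\ref{prop: VCIR technical stuff}, which fixes the fact that $x\mapsto x^2$ is not uniformly continuous and hence not literally covered by part~(a) of Corollary~\ref{thm: law of large numbers VCIR} in its ``locally uniform'' form; and the explicit remark that one must work on the event $(m_1(T),m_2(T))\in U$ and restrict to $b>0$ (the moment map degenerates at $b=0$, so the paper's formulation with $\Theta\subset\R\times(-\infty,0)$ is loose there). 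Also note that for the fractional kernel $\|K\|_{L^1(\R_+)}=\infty$, so the identities $m_1(b,\beta)=b/|\beta|$ etc.\ are the limiting forms of Proposition~\ref{prop: VCIR technical stuff}; this is harmless but worth a sentence.

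There is, however, a genuine gap in your strong-consistency step. The paper simply invokes \cite[Lemma 2.4]{BFK24} (not Lemma 2.1, which is the Birkhoff-type lemma); you instead sketch a mechanism: from $\sup_t\E_{b,\beta}[X_t^p]<\infty$ one gets $\|m_i(T_{n+1})-m_i(T_n)\|_{L^p}\leq C\,|1-T_n/T_{n+1}|$, and you then claim Borel--Cantelli yields that $(m_i(T_n))_n$ is a.s.\ Cauchy. That conclusion only follows when the increments are a.s.\ \emph{absolutely summable}, i.e.\ essentially when $\sum_n|1-T_n/T_{n+1}|<\infty$ (the case $p=1$). Under the stated hypothesis one only gets $\sum_n\E\left[|m_i(T_{n+1})-m_i(T_n)|^p\right]<\infty$, hence $\sum_n|m_i(T_{n+1})-m_i(T_n)|^p<\infty$ a.s.; for $p>1$ this does not imply the Cauchy property (take $T_n=n$ and $p=2$: the increments are of order $1/n$, square-summable but not summable, and a sequence with such increments may drift). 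So precisely for the most natural sequences covered by the hypothesis $\sum_n|1-T_n/T_{n+1}|^p<\infty$ with $p>1$, your argument breaks down. To close this you should either cite \cite[Lemma 2.4]{BFK24} as the paper does, applied to $f(x)=x$ and $f(x)=x^2$ (then finish with continuity of $\Phi^{-1}$ at $\Phi(b,\beta)$, as you do), or supply a genuinely complete replacement for that lemma, which your Borel--Cantelli sketch is not.
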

\begin{proof}
 The uniform Law of Large Numbers Theorem \ref{thm: law of large numbers VCIR} combined with the uniform continuous mapping theorem Proposition \ref{thm: uniform CMT}.(a) gives the consistency of the estimators locally uniformly in $(b,\beta)$. An application of \cite[Lemma 2.5]{BFK24} implies the desired strong consistency of the estimators. 
\end{proof}

\section{Application to Maximum-likelihood estimation}

\subsection{Continuous-time observations}

In this section, we derive the MLE for the VCIR process and prove the asymptotic consistency and normality. Suppose that conditions (K1) and (K2) are satisfied, and let $X$ be the VCIR process with parameters $(b,\beta)$ and law $\P_{b,\beta}$. Unless the Volterra kernel $K$ is regular, $X$ is not a semimartingale. Thus, we cannot use classical results on the change of measures to define the maximum-likelihood ratio and derive the corresponding MLE $(\widehat{b}_T, \widehat{\beta}_T)$ for the parameter $(b,\beta)$. To overcome this issue, we follow the method recently developed in \cite{BFK24}. 

Denote by $L$ the resolvent of the first kind of $K$, i.e., a locally finite measure $L$ on $\R_+$ such that $\int_{[0,t]}K(t-s)L(ds) = 1$ for $t \geq 0$. Note that under condition (K1) such a measure exists and satisfies $L(ds) = K(0_+)\delta_0(ds) + L_0(s)ds$ with $L_0 \in L_{loc}^1(\R_+)$ being completely monotone (see \cite[Chapter 5, Theorem 5.5]{MR1050319}). Suppose that there exists $\alpha \in (1/2,1)$ such that
\[
 \sup_{t \in (0,T]}\left( t^{1-\alpha}K(t) + t^{2-\alpha}|K'(t)| \right) < \infty, \qquad T > 0.
\]
Denote by $C([0,T])$ the space of continuous functions on $[0,T]$, by $C^{\eta}([0,T])$ the space of $\eta \in [0,1)$ H\"older continuous functions, and by $C_0^{\eta}([0,T])$ its subspace of functions vanishing in $0$. The functional 
\begin{align}\label{eq: Z process}
 Z_t(x) := \int_{[0,t]}(x_{t-s} - x_0)L(ds), \qquad x \in C([0,T])
\end{align}
defines a continuous mapping $Z: C^{\eta}([0,T]) \longrightarrow C_0^{\eta}([0,T])$ for $\eta \in [0,1]$. Its inverse transformation $\Gamma: C_0^{\eta}([0,T]) \longrightarrow C_0([0,T])$ is given by 
\[
 \Gamma_t(z) = K(t)z_0 + \int_0^t K'(t-s)(z_s - z_t)ds
\]
and satisfies $\Gamma(Z(x)) = x$ and $Z(\Gamma(z)) = z$ for $x,z \in C_0^{\eta}([0,T])$ whenever $\alpha + \eta > 1$, see \cite[Lemma 2.1]{BFK24}.

Inserting $X$ into the functional $Z$, it is easy to see that $Z(X)$ is a $\P_{b,\beta}$-semimartingale with differential
\begin{align}\label{eq: Z VCIR semimartingale}
 dZ_t(X) = (b + \beta X_t)dt + \sigma \sqrt{X_t}dB_t.
\end{align} 
Let $\P_{b,\beta}^Z$ be the law of $Z$ under $\P_{b,\beta}$. The process $Z$ solves the path-dependent diffusion equation
\[
 dZ_t = (b + \beta (\Gamma_t(Z) + x_0))dt + \sigma \sqrt{\Gamma_t(Z) + x_0}dB_t.
\]
A formal application\footnote{ignoring the assumption (I) and (7.123) therein} of \cite[Theorem 7.19]{MR1800857} shows that the log-likelihood ratio (provided it exists) would be given by 
\begin{align*}
    \log\ \frac{d \P_{b,\beta}^Z}{d\P_{b_0,\beta_0}^Z}\bigg|_{\F_T} = \ell_T(b,\beta; b_0, \beta_0; Z)
\end{align*}
where $\ell_T(b,\beta; b_0, \beta_0; Z)$ is given by
\begin{align*}
  &\int_0^T \frac{(b - b_0) + (\beta - \beta_0)(\Gamma_t(Z)+ x_0)}{\sigma^2 (\Gamma_t(Z) + x_0)}dZ_t 
 \\ &\qquad +  \frac{1}{2}\int_0^T \frac{\left[(b_0 + b) + (\beta_0+\beta)(\Gamma_t(Z)+x_0)\right]\left[ (b_0 - b) + (\beta_0 - \beta)(\Gamma_t(Z) + x_0)\right]}{\sigma^2 (\Gamma_t(Z) + x_0)}dt.
\end{align*}
A rigorous proof that the probability measures $\P^Z_{b,\beta}$ and $\P^Z_{b_0,\beta_0}$ are locally equivalent is related to the boundary behaviour of the process, and shall be studied in future work. The interested reader may consult \cite{BP24} for some related results. However, the parameter estimators defined below turn out to be consistent and asymptotically normal regardless of the absolute continuity of $\P_{b,\beta}^Z|_{\mathcal{F}_T}$ and $\P_{b_0, \beta_0}^Z|_{\mathcal{F}_T}$.

Performing the substitution $Z = Z(X)$ and setting $b_0 = \beta_0 = 0$, we obtain 
\begin{align*}
  \ell_T(b,\beta; Z) &= \int_0^T \frac{b + \beta X_t}{\sigma^2 X_t}dZ_t(X) - \frac{1}{2}\int_0^T \frac{\left(b + \beta X_t\right)^2}{\sigma^2 X_t}dt.
\end{align*}
Assuming that $\int_0^T \E_{b,\beta}[X_t^{-1}]dt < \infty$ a.s., it follows that
\[
 M_T = \int_0^T \begin{pmatrix} X_t^{-1/2} \\ X_t^{1/2} \end{pmatrix}dB_t \ \text{ with } \langle M\rangle_T = \begin{pmatrix}
        \int_0^T X_t^{-1}dt  & T \\ T & \int_0^TX_t dt
        \end{pmatrix}
\]
defines a square-integrable martingale, and we obtain
\[
 \sigma^2 \nabla_{(b,\beta)}\ell_T(b,\beta; X) = - \langle M \rangle_T \begin{pmatrix} b \\ \beta \end{pmatrix} + \begin{pmatrix} \int_0^T X_t^{-1}dZ_t(X) \\ Z_T(X) \end{pmatrix}.
\]
Since the determinant $D_T := \mathrm{det}(\langle M \rangle_T) = \int_0^T X_t dt \int_0^T X_t^{-1}dt - T^2$ is a.s. strictly positive by the Cauchy-Schwarz inequality and the fact that $X$ is non-constant due to $\sigma > 0$ and $K \neq 0$, we may solve $\sigma^2 \nabla_{(b,\beta)}\ell_T(b,\beta; X) = 0$. The solution is given by 
\begin{align}\label{eq: MLE estimators}
    \begin{pmatrix} \widehat{b}_T \\ \widehat{\beta}_T \end{pmatrix} = \langle M \rangle_T^{-1} \begin{pmatrix} \int_0^T X_t^{-1}dZ_t(X) \\ Z_T(X) \end{pmatrix} \ \text{ with } \ \langle M\rangle_T^{-1} = \frac{1}{D_T}\begin{pmatrix}
         \int_0^TX_t dt & -T \\ -T & \int_0^T X_t^{-1}dt
        \end{pmatrix}.
\end{align}
By explicit calculation, we find the desired expressions for the MLE, namely
\begin{align*}
        \widehat{b}_T(X) &= \frac{\int_0^T X_t dt \int_0^T \frac{1}{X_t}dZ_t(X) - T Z_T(X)}{\int_0^T X_tdt \int_0^T \frac{1}{X_t}dt - T^2},
        \\ \widehat{\beta}_T(X) &= \frac{Z_T(X) \int_0^T \frac{1}{X_t}dt - T \int_0^T \frac{1}{X_t}dZ_t(X)}{\int_0^T X_tdt \int_0^T \frac{1}{X_t}dt - T^2}.
\end{align*} 

Below, we prove that these estimators are strongly consistent and asymptotically normal.

\begin{Theorem}\label{thm: MLE for VCIR}
 Suppose that conditions (K1) and (K2) are satisfied, $\beta < 0$, $\sigma > 0$, and \eqref{eq: K lower bound} holds for some $\lambda \in (0,3\gamma/2)$. Let $\Theta \subset \R_+ \times (-\infty, 0)$ be a compact such that
 \begin{align}\label{eq: VCIR boundary}
    \exists \e > 0: \qquad \sup_{(b,\beta) \in \Theta} \sup_{t \geq 0}\ \E_{b,\beta}\left[ X_t^{-1 - \e} \right] < \infty.
 \end{align}
 Then $(\widehat{b}_T, \widehat{\beta}_T)$ is strongly consistent, satisfies 
 \begin{align}\label{eq: consistency}
     \lim_{T \to \infty}\sup_{(b,\beta) \in \Theta}\P_{b,\beta}\left[ \left| (\widehat{b}_T, \widehat{\beta}_T) - (b,\beta)\right| > \eta \right] = 0, \qquad \forall \eta > 0,
 \end{align}
  and is asymptotically normal in the sense that
 \begin{align}\label{eq: asymptotic normality}
     (\mathbb{P}_{b,\beta})_*\left(\sqrt{T}(\widehat{b}_T - b, \widehat{\beta}_T - \beta) \right) \Longrightarrow \sigma I(b,\beta)^{-1/2}\mathcal{N}(0,\mathrm{id}_{2 \times 2}), \qquad T \longrightarrow \infty
 \end{align}
 holds uniformly in $(b,\beta) \in \Theta$ with Fisher information matrix
 \[
    I(b,\beta) = \begin{pmatrix} \int_{\R_+}\frac{1}{x}\pi_{b,\beta}(dx) & 1 \\ 1 & \int_{\R_+}x \pi_{b,\beta}(dx) \end{pmatrix}.
 \]
\end{Theorem}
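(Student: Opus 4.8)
The plan is to reduce everything to the law-of-large numbers established in Corollary \ref{thm: law of large numbers VCIR} together with a central limit theorem for the martingale part of $Z(X)$. First I would rewrite the estimators in terms of the normalized functionals $\mathcal{S}_T := \frac{1}{T}\int_0^T X_t^{-1}dt$, $\mathcal{M}_T := \frac{1}{T}\int_0^T X_t dt$, $\mathcal{Z}_T := \frac{1}{T}Z_T(X)$ and $\mathcal{N}_T := \frac{1}{T}\int_0^T X_t^{-1}dZ_t(X)$. Using the semimartingale decomposition \eqref{eq: Z VCIR semimartingale}, one has $dZ_t(X) = (b + \beta X_t)dt + \sigma\sqrt{X_t}dB_t$, so
\begin{align*}
 \mathcal{Z}_T &= b + \beta \mathcal{M}_T + \frac{\sigma}{T}\int_0^T \sqrt{X_t}dB_t, \\
 \mathcal{N}_T &= b\,\mathcal{S}_T + \beta + \frac{\sigma}{T}\int_0^T X_t^{-1/2}dB_t.
\end{align*}
Substituting these into the formulas for $\widehat b_T, \widehat\beta_T$ exhibits $(\widehat b_T - b, \widehat\beta_T - \beta)$ as a ratio whose denominator is $\mathcal{S}_T\mathcal{M}_T - 1$ and whose numerator is an explicit linear combination of the two stochastic integrals $\frac{1}{T}\int_0^T\sqrt{X_t}dB_t$ and $\frac{1}{T}\int_0^T X_t^{-1/2}dB_t$ with coefficients built from $\mathcal{S}_T$ and $\mathcal{M}_T$.

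Next I would apply the law of large numbers. By Corollary \ref{thm: law of large numbers VCIR}, applied to $f(x)=x$ and, under hypothesis \eqref{eq: VCIR boundary}, to $f(x)=x^{-1}$ (which has the required uniform integrability by \eqref{eq: VCIR boundary} via de la Vallée-Poussin, and is a.e. continuous on $(0,\infty)$, with $\pi_{b,\beta}(\{0\})=0$ a consequence of \eqref{eq: VCIR boundary}), one gets $\mathcal{M}_T \to \int x\,\pi_{b,\beta}(dx)$ and $\mathcal{S}_T \to \int x^{-1}\pi_{b,\beta}(dx)$ in $L^p$ uniformly on $\Theta$; note $\int x\,\pi_{b,\beta}(dx)\int x^{-1}\pi_{b,\beta}(dx) > 1$ strictly by Cauchy-Schwarz since $\pi_{b,\beta}$ is non-degenerate, so the limiting denominator $\det I(b,\beta)$ is bounded away from zero on $\Theta$. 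The two stochastic integrals are $L^2$-martingales with brackets $\frac{\sigma^2}{T^2}\int_0^T X_t dt$ and $\frac{\sigma^2}{T^2}\int_0^T X_t^{-1}dt$, both of order $1/T$, hence they tend to zero in $L^2$; combined with the uniform moment bounds from Proposition \ref{prop: VCIR technical stuff} this yields consistency \eqref{eq: consistency} uniformly on $\Theta$, and strong consistency along suitable subsequences follows as in Corollary \ref{cor: VCIR method of moments} via \cite[Lemma 2.4]{BFK24} (here one needs the $L^p$-rate for $p$ large, which \eqref{eq: VCIR boundary} and Proposition \ref{prop: VCIR technical stuff} provide).

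For asymptotic normality I would look at $\sqrt{T}(\widehat b_T - b, \widehat\beta_T - \beta)$, which equals $(\mathcal{S}_T\mathcal{M}_T - 1)^{-1}$ times a matrix $A_T$ (converging to an explicit matrix $A$ in terms of $\int x\,\pi_{b,\beta}$, $\int x^{-1}\pi_{b,\beta}$) applied to the rescaled martingale vector $M_T := \big(\frac{1}{\sqrt T}\int_0^T\sqrt{X_t}dB_t,\ \frac{1}{\sqrt T}\int_0^T X_t^{-1/2}dB_t\big)$. The cross-bracket of the two components is $\frac{1}{T}\int_0^T 1\,dt = 1$, the diagonal brackets converge by the law of large numbers to $\int x\,\pi_{b,\beta}(dx)$ and $\int x^{-1}\pi_{b,\beta}(dx)$, so the bracket matrix of $M_T$ converges in probability to $I(b,\beta)$. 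A multidimensional martingale CLT (e.g. the functional version, or the one for continuous martingales with converging brackets) gives $M_T \Longrightarrow \sigma I(b,\beta)^{1/2}\mathcal{N}(0,\mathrm{id}_{2\times 2})$, and Slutsky together with the explicit identification $A\, I(b,\beta)^{1/2} / \det I(b,\beta) = I(b,\beta)^{-1/2}$ (a short linear-algebra check, using that $A$ is essentially the adjugate of $I(b,\beta)$) yields \eqref{eq: asymptotic normality}. The uniformity over $\Theta$ is obtained by combining the uniform convergence of brackets with a uniform version of the martingale CLT; the corresponding abstract statement should be placed in the appendix alongside Theorem \ref{thm: uniform CMT}.

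The main obstacle is the stochastic integral $\int_0^T X_t^{-1/2}dB_t$ (and the functional $\int_0^T X_t^{-1}dt$): controlling it requires the negative-moment bound \eqref{eq: VCIR boundary}, and verifying that such a bound actually holds for the VCIR process — i.e. a Feller-type non-attainment of $0$ under a suitable condition on $b,\sigma,K$ — is itself nontrivial for non-Markovian Volterra dynamics; here we take \eqref{eq: VCIR boundary} as a hypothesis. The second delicate point is promoting all the convergences (LLN, bracket convergence, martingale CLT) to hold \emph{uniformly} in $(b,\beta)\in\Theta$, which is why the uniform continuous mapping theorem and a uniform CLT are needed rather than their classical counterparts.
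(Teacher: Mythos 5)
Your plan follows essentially the same route as the paper's proof: the paper also writes
\begin{align*}
(\widehat{b}_T,\widehat{\beta}_T)^{\top} = (b,\beta)^{\top} + \sigma\,\langle M\rangle_T^{-1}M_T,
\qquad M_T=\int_0^T\bigl(X_t^{-1/2},\,X_t^{1/2}\bigr)^{\top}dB_t,
\end{align*}
(your $\mathcal{S}_T,\mathcal{M}_T$ decomposition with the adjugate is this identity divided by $T^2$), then applies the uniform law of large numbers, Corollary \ref{thm: law of large numbers VCIR}.(c), to $f(x)=x$ and $f(x)=1/x$ to get $\langle M\rangle_T/T\to I(b,\beta)$ and $T\langle M\rangle_T^{-1}\to I(b,\beta)^{-1}$ uniformly on $\Theta$, and concludes consistency and the uniform CLT by arguing as in \cite[Theorem 4.2]{BFK24} together with the uniform continuous mapping/Slutsky statements of the appendix. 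Your verification of the hypotheses of Corollary \ref{thm: law of large numbers VCIR}.(c) directly from \eqref{eq: VCIR boundary} with exponent $1+\e$ (de la Vall\'ee-Poussin, and $\pi_{b,\beta}(\{|x|\le\eta\})\le \eta^{1+\e}\sup_t\E_{b,\beta}[X_t^{-1-\e}]$ for \eqref{eq: pi boundary}) is fine, and in fact matches the stated hypothesis better than the paper's own computation, which invokes third negative moments. The small slip of writing $\sigma$ inside the limit law of $M_T$ is harmless since $\sigma$ enters only through the prefactor.

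The one genuine shortfall is the strong-consistency step. You propose to obtain it ``along suitable subsequences'' via \cite[Lemma 2.4]{BFK24} as in Corollary \ref{cor: VCIR method of moments}, but the theorem asserts strong consistency of $(\widehat{b}_T,\widehat{\beta}_T)$ as $T\to\infty$, not merely along sequences $(T_n)$ satisfying a summability condition. The route the paper intends (via the argument of \cite[Theorem 4.2]{BFK24}) is the almost-sure law of large numbers for continuous local martingales: since $X>0$ and the diagonal brackets $\int_0^T X_t\,dt$ and $\int_0^T X_t^{-1}dt$ increase to $+\infty$ a.s., one has $M_T/\langle M\rangle_T\to 0$ a.s., which combined with the representation above yields almost-sure convergence of the estimator for every $T\to\infty$; the discretization lemma you cite does not deliver this. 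Apart from replacing that step, your argument is the paper's.
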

\begin{proof}
    Using \eqref{eq: Z VCIR semimartingale} combined with \eqref{eq: MLE estimators}, we obtain by direct computation
    \begin{align*}
        \begin{pmatrix} \widehat{b}_T \\ \widehat{\beta}_T \end{pmatrix} &= \begin{pmatrix}
            b \\ \beta
        \end{pmatrix} - \sigma \langle M \rangle_T^{-1} M_T.
    \end{align*}
    Note that condition \eqref{eq: pi boundary} is satisfied by the Fatou's Lemma combined with 
    \begin{align*}
        \sup_{(b,\beta) \in \Theta}\pi_{b,\beta}(\{|x| \leq \eta\}) &\leq \eta^{1+\e} \sup_{(b,\beta) \in \Theta}\int_{\R_+} x^{-1 -\e}\pi_{b,\beta}(dx)
        \\ &\leq \eta^{1+\e} \sup_{(b,\beta) \in \Theta} \sup_{t \geq 0}\E_{b,\beta}\left[ X_t^{-1-\e} \right] < \infty.
    \end{align*}
    Likewise, it holds that 
    \begin{align*}
        &\ \sup_{(b,\beta) \in \Theta}\int_{\R_+} x^{-1} \1_{\{x^{-1}> R\}}\pi_{b,\beta}(dx)
        + \sup_{(b,\beta) \in \Theta}\sup_{t \geq 0}\ \E_{b,\beta}\left[ X_t^{-1} \1_{\{X_t^{-1} > R\}}\right]
        \\ &\qquad \qquad \qquad \leq \frac{2}{R^{\e}}\sup_{(b,\beta) \in \Theta} \sup_{t \geq 0}\E_{b,\beta}\left[ X_t^{-1 - \e} \right] < \infty.
    \end{align*}
    Thus Theorem \ref{thm: law of large numbers VCIR}.(c) can be applied to $f(x) = x$ and $f(x) = 1/x$ with $p=1$ from which we deduce $\langle M \rangle_T/T \longrightarrow I(b,\beta)$ in probability uniformly on $\Theta$ as $T \to \infty$. In particular, $\langle M \rangle_T \longrightarrow +\infty$ a.s. and hence, by the strong law of large numbers for martingales, we conclude that $\langle M \rangle_T^{-1}M_T \longrightarrow 0$ a.s., which proves strong consistency. Similarly, we also obtain $\langle M \rangle_T^{-1}/T \longrightarrow I(b,\beta)^{-1}$ and $M_T/T \longrightarrow 0$ in probability uniformly on $\Theta$. Hence, an application of Theorem \ref{thm: uniform continuous mapping theorem}.(a) yields
    \begin{align*}
    \sup_{(b,\beta) \in \Theta}\P[ |(\widehat{b}_T, \widehat{\beta}_T) - (b,\beta)| > \e]
        &= \sup_{(b,\beta) \in \Theta}\P\left[ \left| \left(\frac{\langle M_T\rangle_T}{T}\right)^{-1} \frac{M_T}{T}\right| > \sigma^{-1}\e\right] \longrightarrow 0.
    \end{align*}
    The asymptotic normality uniformly in $(b,\beta) \in \Theta$ follows from the central limit theorem \cite[Proposition 1.21]{MR2144185} applied to
    \[
     \left( \frac{\langle M \rangle_T}{T}\right)^{-1}\frac{M_T}{\sqrt{T}} \Longrightarrow I(b,\beta)^{-1} \mathcal{N}(0, I(b,\beta)) = I(b,\beta)^{-1/2}\mathcal{N}(0,\mathrm{id}_{2\times 2}).
    \] 
\end{proof}

While $\int_{\R_+}x \pi_{b,\beta}(dx) = \lim_{t \to \infty}\E_{b,\beta}[X_t]$ can be computed explicitly (see Section 2), an explicit formula for $\int_{\R_+}x^{-1}\pi_{b,\beta}(dx)$ is not known. Such an expression could be computed, e.g., by Monte-Carlo methods based on the Law of Large Numbers established in Theorem \ref{thm: law of large numbers VCIR}.

\subsection{Discrete high-frequency observations}

In this section, we consider the case of discrete high-frequency observations. For each $n \geq 1$ let $0 = t_0^{(n)} < \dots < t_n^{(n)}$ be a partition of length $t_n^{(n)}$. Denote by
\begin{align}\label{eq: partition}
    \mathcal{P}_n = \{ [t_k^{(n)}, t_{k+1}^{(n)}] \ : \ k = 0,\dots, n-1 \}
\end{align}
the collection of neighbouring intervals determined by this partition, and let
\[
    |\mathcal{P}_n| = \max_{[u,v] \in \mathcal{P}_n}(v-u)
\]
be its mesh size. The discretisation of $X$ with respect to $\mathcal{P}_n$ is defined by
\begin{align}\label{eq: X discretisation}
    X^{\mathcal{P}_n}_s = \sum_{[u,v] \in \mathcal{P}_n}\1_{[u,v)}(s)X_{u}.
\end{align} 
Here and below we let $\lesssim$ denote an inequality that is supposed to hold up to a constant independent of the discretisation and locally uniform in $(b,\beta) \in \R_+ \times (-\infty, 0)$. 

Then, under conditions (K1), (K2), $\sigma > 0$, and $\beta < 0$, and using Proposition \ref{prop: VCIR technical stuff}, it is easy to see that 
\begin{align}\label{eq: discretisation bound 1}
        \sup_{n \geq 1}\sup_{t \in [0,t_n^{(n)})}\left(\|X_t\|_{L^{p}(\Omega)} + \|X^{\mathcal{P}_n}_t\|_{L^{p}(\Omega)}\right) \lesssim C_p < \infty, \qquad \forall p \in [1,\infty)
\end{align}
and 
\begin{align}\label{eq: discretisation bound 2}
    \| X_t - X^{\mathcal{P}_n}_t\|_{L^p(\Omega)} \lesssim |\mathcal{P}_n|^{\gamma}, \qquad t \in [0,t_n).
\end{align}

Finally, let us recall that $Z = Z(X)$ denotes the process given by \eqref{eq: Z process} where $x$ is replaced with $X$. Both bounds combined allow us to prove the following lemma.

\begin{Lemma}\label{lemma: discretisation VOU}
 Suppose that (K1), (K2) hold, $\sigma > 0$ and $\beta < 0$. Let $f: \R \longrightarrow \R$ satisfy
 \[
  |f(x) - f(y)| \leq C_f(1 + |x|^q + |y|^q)|x-y|, \qquad \forall x,y \in \R,
 \]
 for some constants $C_f,q\geq0$. Then for each $p \in [2,\infty)$ there exists $C_p(b,\beta)$ independent of the discretisation and locally bounded with respect to $(b,\beta) \in \R \times (-\infty, 0)$ such that
 \begin{align*}
      &\ \left\| \int_0^{t_n^{(n)}} f(X_s)ds -  \sum_{[u,v] \in \mathcal{P}_n}f(X_{u})(v-u) \right\|_{L^p(\Omega)} 
      \\ &\quad + \left\| \int_0^{t_n^{(n)}} f(X_s)dZ_s - \sum_{[u,v]\in \mathcal{P}_n}f(X_u)(Z_v - Z_u) \right\|_{L^p(\Omega)} 
     \leq C_p(b,\beta) \left( \left(t_n^{(n)}\right)^{\frac{1}{2}} + t_n^{(n)} \right)|\mathcal{P}_n|^{\gamma}.
 \end{align*}
\end{Lemma}
\begin{proof}
 Using $\sum_{[u,v] \in \mathcal{P}_n}f(X_{u})(v-u) = \int_0^{t_n^{(n)}}f(X^{\mathcal{P}_n}_s)ds$ combined with the H\"older inequality, and properties \eqref{eq: discretisation bound 1} and \eqref{eq: discretisation bound 2}, the first asserted inequality follows from 
    \begin{align*}
        &\ \left\| \int_0^{t_n^{(n)}} f(X_s)ds - \sum_{[u,v] \in \mathcal{P}_n}f(X_{u})(v-u) \right\|_{L^p(\Omega)}
        \\ &\lesssim \int_{0}^{t_{n}^{(n)}} \left\|(1 + |X_s|^q + |X^{\mathcal{P}_n}_{s}|^q)|X_s - X^{\mathcal{P}_n}_{s}| \right\|_{L^p(\Omega)} ds 
        \\ &\lesssim \int_{0}^{t_{n}^{(n)}} \left\| X_s - X^{\mathcal{P}_n}_{s} \right\|_{L^{2p}(\Omega)} ds 
       \lesssim |\mathcal{P}_n|^{\gamma} t_n^{(n)}.
    \end{align*}
    For the second bound, we first use $\sum_{[u,v] \in \mathcal{P}_n}f(X_{u})(Z_{v} - Z_{u})
     = \int_0^{t_n^{(n)}}f(X^{\mathcal{P}_n}_s)dZ_s$ and then the semimartingale representation \eqref{eq: Z VCIR semimartingale} to find 
    \begin{align*}
    &\ \left\| \int_0^{t_n^{(n)}} f(X_s)dZ_s - \sum_{[u,v] \in \mathcal{P}_n}f(X_{u})(Z_{v} - Z_{u})\right\|_{L^p(\Omega)} 
    \\ &\leq \int_{0}^{t_n^{(n)}} \left\| (f(X_s) - f(X^{\mathcal{P}_n}_{s}))(b+\beta X_s) \right\|_{L^p(\Omega)} ds 
    \\ &\qquad + \sigma \left\|  \int_{0}^{t_{n}^{(n)}} (f(X_s) - f(X^{\mathcal{P}_n}_{s}))\sqrt{X_s}dB_s \right\|_{L^p(\Omega)} 
    \\ &\lesssim \int_{0}^{t_{n}^{(n)}} \left\| (1 + |X_s|^q + |X^{\mathcal{P}_n}_{s}|^q)(|b|+|\beta|| X_s|)|X_s - X^{\mathcal{P}_n}_{s}| \right\|_{L^{p}(\Omega)} ds 
    \\ &\qquad + \left( \E\left[ \left(\int_{0}^{t_{n}^{(n)}} (1 + |X_s|^q + |X^{\mathcal{P}_n}_{s}|^q)^2|X_s - X^{\mathcal{P}_n}_{s}|^2 X_s ds\right)^{\frac{p}{2}} \right] \right)^{\frac{1}{p}}
    \\ &\lesssim \int_{0}^{t_{n}^{(n)}} \left\| X_s - X^{\mathcal{P}_n}_{s}\right\|_{L^{2p}(\Omega)} ds 
    + \left( t_n^{(n)}\right)^{\frac{1}{2} - \frac{1}{p}}\left( \int_{0}^{t_{n}^{(n)}} \left( \E\left[|X_s - X^{\mathcal{P}_n}_{s}|^{2p} \right] \right)^{1/2}ds \right)^{\frac{1}{p}}
    \\ &\lesssim \left(t_n^{(n)} + \left( t_n^{(n)}\right)^{\frac{1}{2}} \right)|\mathcal{P}_n|^{\gamma},
    \end{align*}
    where we have used the H\"older inequality combined with \eqref{eq: discretisation bound 1} and \eqref{eq: discretisation bound 2}. 
\end{proof}

Below we complement these estimates with additional approximations of the integrals $\int_0^T X_t^{-1}dt$ and $\int_0^T X_t^{-1}dZ_t(X)$.

\begin{Lemma}\label{lemma: discretisation VCIR}
    Suppose that conditions (K1), (K2) hold, $\sigma > 0$ and $\beta < 0$. Suppose that $\Theta \subset \R_+ \times (-\infty, 0)$ is a compact such that 
    \begin{align}\label{eq: VCIR negative moment bound}
        \exists \varepsilon > 0: \qquad \sup_{(b,\beta) \in \Theta} \sup_{t \geq 0}\ \E_{b,\beta}\left[ X_t^{-3-\e} \right] < \infty
    \end{align}
    holds. Then   
    \begin{align*}
     &\ \sup_{(b,\beta) \in \Theta}\E_{b,\beta}\left[\left| \int_0^{t_n^{(n)}}\frac{1}{X_s}ds - \sum_{[u,v] \in \mathcal{P}_n}\frac{v-u}{X_u}\right| \right]
     \\ &\qquad + \sup_{(b,\beta) \in \Theta}\E_{b,\beta}\left[\left|\int_0^{t_n^{(n)}}  \frac{1}{X_s}dZ_s(X)  -  \sum_{[u,v] \in \mathcal{P}_n} \frac{Z_v(X) - Z_u(X)}{X_u^{\mathcal{P}_n}}\right|\right]
     \\ &\qquad \qquad \lesssim \left( t_n^{(n)} + (t_n^{(n)})^{1/2}\right)|\mathcal{P}_n|^{\gamma}.
    \end{align*}
\end{Lemma}
\begin{proof}
    First note that by \eqref{eq: VCIR negative moment bound} and the particular form of $X^{\mathcal{P}_n}$ we have
    \begin{align}\label{eq: negative moments}
        \E_{b,\beta}\left[ (X_s^{\mathcal{P}_n})^{-3 - \e} \right] \leq \sup_{(b,\beta) \in \Theta} \sup_{t \geq 0}\ \E_{b,\beta}\left[ X_t^{-3 - \e} \right] < \infty
    \end{align}
    for each $s \in [0, t_n^{(n)})$. Hence, for the first bound we obtain from \eqref{eq: discretisation bound 2}
    \begin{align*}
       &\ \E_{b,\beta}\left[\left| \int_0^{t_n^{(n)}}\frac{1}{X_s}ds - \sum_{[u,v] \in \mathcal{P}_n}\frac{v-u}{X_u}\right| \right]
       \\ &= \E_{b,\beta}\left[\left|\int_0^{t_n^{(n)}} \frac{1}{X_s}ds  -  \int_0^{t_n^{(n)}} \frac{1}{X_s^{\mathcal{P}_n}}ds\right|\right]
        \\ &\leq \int_0^{t_n^{(n)}} \E_{b,\beta}\left[ \frac{|X_s - X_s^{\mathcal{P}_n}|}{X_s X_s^{\mathcal{P}_n}}\right]dx
        \\ &\lesssim |\mathcal{P}_n|^{\gamma}\int_0^{t_n^{(n)}} \left(\E_{b,\beta}\left[ X_s^{-q}\right]\right)^{1/q}\left(\E_{b,\beta}\left[ (X_s^{\mathcal{P}_n})^{-q}\right]\right)^{1/q}ds
        \\ &\lesssim t_n^{(n)}|\mathcal{P}_n|^{\gamma}
    \end{align*}
    uniformly on $\Theta$ where we have used the H\"older inequality for products of three functions with $q := q_1 = q_2 = 2+\varepsilon$ and $p = 1 + 2/\varepsilon$ determined by $1/p + 1/q_1 + 1/q_2 = 1$, and \eqref{eq: discretisation bound 1} and \eqref{eq: discretisation bound 2}. Similarly, we estimate 
    \begin{align*}
        &\ \E_{b,\beta}\left[\left|\int_0^{t_n^{(n)}}  \frac{1}{X_s}dZ_s(X)  -  \sum_{[u,v] \in \mathcal{P}_n} \frac{Z_v(X) - Z_u(X)}{X_u^{\mathcal{P}_n}}\right|\right] 
        \\ &\leq \E_{b,\beta}\left[\left|\int_0^{t_n^{(n)}}  \left(\frac{1}{X_s} - \frac{1}{X_s^{\mathcal{P}_n}}\right)(b+\beta X_s)ds \right|\right]
        \\ &\qquad + \sigma \E_{b,\beta}\left[\left|\int_0^{t_n^{(n)}}  \left(\frac{1}{X_s} - \frac{1}{X_s^{\mathcal{P}_n}}\right)\sqrt{X_s}dB_s \right|\right]\\ &\leq \int_0^{t_n^{(n)}} \E_{b,\beta}\left[ \frac{\left| X_s - X_s^{\mathcal{P}_n}\right| |b+\beta X_s|}{X_s X_s^{\mathcal{P}_n}} \right] ds
        + \left( \int_0^{t_n^{(n)}} \E_{b,\beta}\left[ \frac{|X_s - X_s^{\mathcal{P}_n}|^2}{X_s (X_s^{\mathcal{P}_n})^2} \right] ds \right)^{1/2}.
    \end{align*}
    For the first term we obtain from the H\"older inequality for products of three functions with $q := q_1 = q_2 = 2+\varepsilon$ and $p = 1 + 2/\varepsilon$, and then another application of the H\"older inequality with $r = \frac{3+\varepsilon}{2+\varepsilon}$ and $r' = 3+ \varepsilon$ the bound
    \begin{align*}
       &\ \int_0^{t_n^{(n)}} \E_{b,\beta}\left[ \left| X_s - X_s^{\mathcal{P}_n}\right| \frac{ |b+\beta X_s|^{1/2}}{X_s} \frac{ |b+\beta X_s|^{1/2}}{X_s^{\mathcal{P}_n}}  \right] ds 
        \\ &\lesssim \int_0^{t_n^{(n)}} \| X_s - X_s^{\mathcal{P}_n}\|_{L^p(\Omega)} \left( \E\left[ \frac{|b+\beta X_s|^{q/2}}{X_s^q}\right] \right)^{1/q} \left( \E\left[ \frac{|b+\beta X_s|^{q/2}}{(X_s^{\mathcal{P}_n})^q}\right] \right)^{1/q} ds 
        \\ &\lesssim |\mathcal{P}_n|^{\gamma} \int_0^{t_n^{(n)}} \left( \E\left[ |b + \beta X_s|^{\frac{(2+\varepsilon)(3+\varepsilon)}{2}} \right] \right)^{\frac{2}{(2+\varepsilon)(3+\varepsilon)}} \left( \E\left[ X_s^{-3-\varepsilon}\right] \right)^{\frac{1}{3+\varepsilon}} \left( \E\left[ (X_s^{\mathcal{P}_n})^{-3-\varepsilon}\right] \right)^{\frac{1}{3+\varepsilon}} ds 
        \\ &\lesssim t_n^{(n)} |\mathcal{P}_n|^{\gamma},
    \end{align*}
    where we have also used \eqref{eq: discretisation bound 1}, \eqref{eq: discretisation bound 2}, \eqref{eq: VCIR negative moment bound}, and \eqref{eq: negative moments}. Similarly, using once the H\"older inequality with $\frac{1}{1+\e/3} + \frac{1}{\frac{3+\e}{\e}} = 1$, and another time with $\frac{1}{3} + \frac{1}{3/2} = 1$, and then again \eqref{eq: discretisation bound 2}, \eqref{eq: VCIR negative moment bound}, and \eqref{eq: negative moments}, gives \begin{align*}
       &\ \int_0^{t_n^{(n)}} \E_{b,\beta}\left[ \frac{|X_s - X_s^{\mathcal{P}_n}|^2}{X_s (X_s^{\mathcal{P}_n})^2} \right] ds
        \\ &\leq \int_0^{t_n^{(n)}} \left(\E_{b,\beta}\left[ |X_s - X_s^{\mathcal{P}_n}|^{2\frac{3+\e}{\e}} \right]\right)^{\frac{\e}{3+\e}} \left(\E_{b,\beta}\left[ X_s^{-1-\e/3} (X_s^{\mathcal{P}_n})^{-2(1+\e/3)} \right] \right)^{\frac{1}{1 + \e/3}} ds
        \\ &\lesssim |\mathcal{P}_n|^{2\gamma} \int_0^{t_n^{(n)}} \left( \E\left[ X_s^{-3 - \e}\right]\right)^{\frac{1}{3 + \e}} \left( \E\left[ (X_s^{\mathcal{P}_n})^{-3 - \e} \right] \right)^{\frac{2}{3+\e}} ds
        \\ &\lesssim t_n^{(n)} |\mathcal{P}_n|^{2\gamma}.
    \end{align*}
    This proves the assertion.
\end{proof}

To obtain the MLE for high-frequency observations, below we consider the discretisation of the continuous time MLE $(\widehat{b}_T, \widehat{\beta}_T)$. Let $(m(n))_{n \geq 1}$ be a sequence with $m(n) \to \infty$ such that $\mathcal{P}_{m(n)}$ is a refinement of $\mathcal{P}_n$, i.e. $\mathcal{P}_n \subset \mathcal{P}_{m(n)}$. The discretised estimators are then given by

\begin{footnotesize}
\begin{align*}
        \widehat{b}^{(n)}(X) &= \frac{\left(\sum_{[u,v] \in \mathcal{P}_n}X_u (v-u) \right) \left( \sum_{[u,v] \in \mathcal{P}_n} X_u^{-1}( Z_v^{\mathcal{P}_m} - Z_u^{\mathcal{P}_m}) \right) - t_n^{(n)} Z^{\mathcal{P}_n}_{t_n^{(n)}}}{\left(\sum_{[u,v] \in \mathcal{P}_n}X_u (v-u) \right)\left(\sum_{[u,v] \in \mathcal{P}_n}X_u^{-1} (v-u) \right) - (t_n^{(n)})^2}
        \\ \widehat{\beta}^{(n)}(X) &= \frac{Z^{\mathcal{P}_n}_{t_n^{(n)}} \left(\sum_{[u,v] \in \mathcal{P}_n}X_u (v-u) \right) - t_n^{(n)} \left( \sum_{[u,v] \in \mathcal{P}_n} X_u^{-1}( Z_v^{\mathcal{P}_m} - Z_u^{\mathcal{P}_m}) \right)}{\left(\sum_{[u,v] \in \mathcal{P}_n}X_u (v-u) \right)\left(\sum_{[u,v] \in \mathcal{P}_n}X_u^{-1} (v-u) \right) - (t_n^{(n)})^2}
\end{align*}
\end{footnotesize}
where $Z^{\mathcal{P}_m} = Z^{\mathcal{P}_{m(n)}}$ denotes the discretised auxiliary process with respect to $\mathcal{P}_{m(n)}$ defined by 
\begin{align}\label{eq: Z discretisation}
     Z^{\mathcal{P}_{m(n)}}_{u}(X) = \sum_{\bfrac{[u',v'] \in \mathcal{P}_{m(n)}}{v' \leq u}}(X_{u - u'} - x_0)L((u', v']) + K(0_+)^{-1}(X_{u} - x_0)
\end{align}
where $[u,v] \in \mathcal{P}_n$, $Z^{\mathcal{P}_{m(n)}}_0(X) = 0$, and we use the convention $1/+\infty = 0$. The next theorem provides, under additional conditions, the consistency and asymptotic normality of this estimator.

\begin{Theorem}\label{thm: discrete MLE for VCIR}
   Suppose that (K1) and (K2) holds, $\sigma > 0$ and $\beta < 0$, and there exists $0 < \lambda \leq \frac{3\gamma}{2}$ satisfying \eqref{eq: K lower bound}. Let $(\mathcal{P}_n)_{n \geq 1}$ be a sequence of partitions such that $t_n^{(n)} \longrightarrow \infty$ as $n \to \infty$, and  
 \begin{align}\label{eq: mesh size}
  \lim_{n \to \infty} \sqrt{t_n^{(n)}} |\mathcal{P}_n|^{\gamma} = 0.
 \end{align}
 Let $(m(n))_{n\geq 1}$ be such that $m(n) \geq n$, $\mathcal{P}_n \subset \mathcal{P}_{m(n)}$ satisfying
 \begin{align}\label{eq: mesh size 1}
    \lim_{n \to \infty} n \cdot \frac{L((0,t_n^{(n)}])}{\sqrt{t_n^{(n)}}}|\mathcal{P}_{m(n)}|^{\gamma} = 0.
 \end{align}
 Suppose that $\Theta \subset \R \times (- \infty, 0)$ is a compact with the property \eqref{eq: VCIR negative moment bound}. Then 
 \begin{align}\label{eq: discrete consistency}
  \lim_{n \to \infty}\sup_{(b,\beta) \in \Theta}\P_{b,\beta}\left[ \left|(\widehat{b}^{(n)}, \widehat{\beta}^{(n)}) - (b, \beta) \right| > \e \right] = 0,
 \end{align}
 and asymptotically normality holds uniformly in $(b,\beta) \in \Theta$, i.e.
 \begin{align}\label{eq: discrete asymptotic normality}
  (\mathbb{\P}_{b,\beta})_*\left(\sqrt{t_n^{(n)}}\left( (\widehat{b}^{(n)}, \widehat{\beta}^{(n)}) - (b,\beta)\right) \right) \Longrightarrow \sigma I(b,\beta)^{-1/2}\mathcal{N}(0, \mathrm{id}_{2\times 2}), \qquad n \to \infty.
 \end{align}
 holds uniformly on $\Theta$ with Fisher information matrix given as in Theorem \ref{thm: MLE for VCIR}.
\end{Theorem}
\begin{proof} 
    \textit{Step 1.} Let us define $A_T = ( T^{-1}\int_0^T X_t^{-1}dZ_t(X), T^{-1} Z_T(X))^{\top}$ and define
    \begin{align*}
     \mathcal{D}^{(n)} &= \left( \frac{1}{t_n^{(n)}}\sum_{[u,v] \in \mathcal{P}_n} X_u (v-u) \right)\left( \frac{1}{t_n^{(n)}} \sum_{[u,v] \in \mathcal{P}_n} X_u^{-1} (v-u) \right) - 1,
     \\ \mathcal{M}^{(n)} &= \begin{pmatrix} \frac{1}{t_n^{(n)}} \sum_{[u,v] \in \mathcal{P}_n} X_u^{-1}(v-u) & 1 \\ 1 & \frac{1}{t_n^{(n)}} \sum_{[u,v] \in \mathcal{P}_n} X_u (v-u) \end{pmatrix},
     \\ \mathcal{A}^{(n)} &= \frac{1}{t_n^{(n)}} \begin{pmatrix} \sum_{[u,v] \in \mathcal{P}_n} X_u^{-1}\left( Z_v^{\mathcal{P}_m} - Z_u^{\mathcal{P}_m}\right) \\ Z_{t_n^{(n)}}^{\mathcal{P}_n}\end{pmatrix}.
    \end{align*}
    Then we obtain $(\widehat{b}^{(n)}, \widehat{\beta}^{(n)})^{\top} = (\mathcal{D}^{(n)})^{-1}\mathcal{M}^{(n)} \mathcal{A}^{(n)}$ and hence
    \begin{align*}
     \sqrt{t_n^{(n)}}\left(\begin{pmatrix}\widehat{b}^{(n)} \\ \widehat{\beta}^{(n)} \end{pmatrix} - \begin{pmatrix}
        b \\ \beta \end{pmatrix} \right)
        &= \sqrt{t_n^{(n)}} (\mathcal{D}^{(n)})^{-1} \mathcal{M}^{(n)}\left( \mathcal{A}^{(n)} - A_{t_n^{(n)}} \right) 
        \\ &\qquad + \sqrt{t_n^{(n)}} \left( (\mathcal{D}^{(n)})^{-1}\mathcal{M}^{(n)} - \langle M \rangle_{t_n^{(n)}}^{-1} \right) A_{t_n^{(n)}}
        \\ &\qquad + \sqrt{t_n^{(n)}} \langle M \rangle_{t_n}^{-1} A_{t_n^{(n)}}.
    \end{align*}
    Since $t_n^{(n)} \longrightarrow \infty$, the last term converges by Theorem \ref{thm: MLE for VCIR} to the desired Gaussian law locally uniformly in $(b,\beta)$. Thus, by Proposition \ref{prop: uniform weak convergence}.(a) it suffices to prove that the first two terms converge to zero in probability locally uniformly in $(b,\beta)$. 
    
    \textit{Step 2.} In this step, we prove that 
    \begin{align}\label{eq: step 2}
        \sqrt{t_n^{(n)}} (\mathcal{D}^{(n)})^{-1} \mathcal{M}^{(n)}\left( \mathcal{A}^{(n)} - A_{t_n^{(n)}} \right) \longrightarrow 0
    \end{align}
    in probability locally uniformly in $(b,\beta)$. For this purpose, let us write 
    \begin{align*}
         \mathcal{D}^{(n)} &- \mathrm{det}(I(b,\beta)) 
         \\ &= \mathcal{D}^{(n)} - D_{t_n^{(n)}} + D_{t_n^{(n)}} - \mathrm{det}(I(b,\beta))
        \\ &= \left( \frac{1}{t_n^{(n)}} \sum_{[u,v] \in \mathcal{P}_n} X_u (v-u) - \frac{1}{t_n^{(n)}}\int_0^{t_n^{(n)}}X_sds \right)\left( \frac{1}{t_n^{(n)}}\sum_{[u,v] \in \mathcal{P}_n}X_u^{-1}(v-u) \right)
        \\ &\qquad + \left( \frac{1}{t_n^{(n)}} \int_0^{t_n^{(n)}} X_s ds \right)\left( \frac{1}{t_n^{(n)}}\sum_{[u,v] \in \mathcal{P}_n}X_u^{-1}(v-u) - \frac{1}{t_n^{(n)}}\int_0^{t_n^{(n)}}X_s^{-1}ds \right)
        \\ &\qquad + D_{t_n^{(n)}} - \mathrm{det}(I(b,\beta)).
    \end{align*}
    By the previously established Law of Large Numbers from Theorem \ref{thm: law of large numbers VCIR}, we get $D_{t_n^{(n)}} \longrightarrow \mathrm{det}(I(b,\beta)) > 0$ in probability locally uniformly in $(b,\beta)$. For the first two terms, we may apply Lemma \ref{lemma: discretisation VOU}, Lemma \ref{lemma: discretisation VCIR}, and assumption \eqref{eq: mesh size}, combined with the Law of Large Numbers, \eqref{eq: discretisation bound 1}, and Slutsky's theorem, to find that these converge to zero as $n \to \infty$ in probability locally uniformly in $(b,\beta)$. Hence $\mathcal{D}^{(n)} \longrightarrow \mathrm{det}(I(b,\beta)) > 0$ in probability locally uniformly in $(b,\beta)$.
    
    Similarly, we may also show that $\mathcal{M}^{(n)} \longrightarrow I(b,\beta)$ holds in probability locally uniformly. Indeed, it suffices to study the convergence of the diagonal entries for these matrices. In both cases, letting $j \in \{-1,1\}$, we obtain
    \begin{align*}
        &\ \frac{1}{t_n^{(n)}} \sum_{[u,v] \in \mathcal{P}_n} X_u^j (v-u) - \int_{\R_+} x^j \pi_{b,\beta}(x)dx 
        \\ &= \frac{1}{t_n^{(n)}} \sum_{[u,v] \in \mathcal{P}_n} X^j_u (v-u) - \frac{1}{t_n^{(n)}}\int_0^{t_n^{(n)}} X^j_s ds + \frac{1}{t_n^{(n)}}\int_0^{t_n^{(n)}} X_s^j ds - \int_{\R_+} x^j \pi_{b,\beta}(x)dx 
    \end{align*}
    which tends to zero in probability locally uniformly by the Law of Large Numbers combined with Lemma \ref{lemma: discretisation VOU} and \eqref{eq: mesh size}. 

    Next we prove 
    \begin{align}\label{vector discretisation}
        \sqrt{t_n^{(n)}}\left( \mathcal{A}^{(n)} - A_{t_n^{(n)}} \right) \longrightarrow 0
    \end{align}
    by studying the convergence for both components separately. First, let us note that by \eqref{eq: discretisation bound 2} we obtain
    \begin{align}\label{eq: Z discretisation 1}
     \left\| Z_{u} - Z^{\mathcal{P}_m}_u\right\|_{L^p(\Omega)}
     \leq \int_{[u,v]} \|X_{u-s} - X_{u-s}^{\mathcal{P}_m}\|_{L^p(\Omega)} L(ds)
     \lesssim |\mathcal{P}_m|^{\gamma} L((0,u])
    \end{align}
    for $[u,v] \in \mathcal{P}_n$ and $m \geq n$. Then we obtain for the second component
    \[
        \left\| \sqrt{t_n^{(n)}}\left(\frac{1}{t_n^{(n)}}Z_{t_n^{(n)}}^{\mathcal{P}_n} - \frac{1}{t_n^{(n)}} Z_{t_n^{(n)}} \right) \right\|_{L^p(\Omega)} \lesssim |\mathcal{P}_m|^{\gamma}\frac{L((0,t_n^{(n)}])}{\sqrt{t_n^{(n)}}}
    \]
    which tends to zero by \eqref{eq: mesh size 1}. For the first component, we use Lemma \ref{lemma: discretisation VCIR} combined with \eqref{eq: Z discretisation 1} to find 
    \begin{align*}
        &\ \sqrt{t_n^{(n)}} \left\| \frac{1}{t_n^{(n)}}\sum_{[u,v] \in \mathcal{P}_n} X_u^{-1}\left( Z_v^{\mathcal{P}_m} - Z_u^{\mathcal{P}_m}\right) - \frac{1}{t_n^{(n)}} \int_0^{t_n^{(n)}} X_t^{-1}dZ_t \right\|_{L^1(\Omega)}
        \\ &\lesssim \sqrt{t_n^{(n)}} \left\| \frac{1}{t_n^{(n)}}\sum_{[u,v] \in \mathcal{P}_n} X_u^{-1}\left( (Z_v^{\mathcal{P}_m} - Z_v) - (Z_u^{\mathcal{P}_m} - Z_u) \right) \right\|_{L^1(\Omega)}
        \\ &\qquad + \sqrt{t_n^{(n)}} \left\| \frac{1}{t_n^{(n)}}\sum_{[u,v] \in \mathcal{P}_n} X_u^{-1}\left( Z_v -  Z_u \right) - \frac{1}{t_n^{(n)}} \int_0^{t_n^{(n)}} X_t^{-1}dZ_t \right\|_{L^1(\Omega)}
        \\ &\lesssim \frac{|\mathcal{P}_m|^{\gamma}}{\sqrt{t_n^{(n)}}} \sum_{[u,v] \in \mathcal{P}_n} \|X_u^{-1}\|_{L^2(\Omega)}( L((0,u]) + L((0, v]) ) + (1 + \sqrt{t_n^{(n)}}) |\mathcal{P}_m|^{\gamma}
        \\ &\lesssim  |\mathcal{P}_m|^{\gamma} n \frac{L((0,t_n^{(n)}])}{\sqrt{t_n^{(n)}}} + \sqrt{t_n^{(n)}} |\mathcal{P}_m|^{\gamma}.
    \end{align*}
    By \eqref{eq: discretisation bound 1} and \eqref{eq: discretisation bound 2}, the right-hand side tends to zero. Thus, we have shown that \eqref{vector discretisation}. Thus, since $\mathcal{D}^{(n)} \longrightarrow \mathrm{det}(I(b,\beta))$, $\mathcal{M}^{(n)} \longrightarrow I(b,\beta)$, and \eqref{vector discretisation}, by Slutsky's theorem we obtain \eqref{eq: step 2}.

    \textit{Step 3.} Arguing similarly to step 2, we may also prove 
    \[
     \sqrt{t_n^{(n)}} \left( (\mathcal{D}^{(n)})^{-1}\mathcal{M}^{(n)} - \langle M \rangle_{t_n^{(n)}}^{-1} \right) \longrightarrow 0
    \]
    in probability locally uniformly in $(b,\beta)$.
    Since additionally
    \[
        A_{t_n^{(n)}} \longrightarrow \left( \int_{\R_+} \frac{b + \beta x}{x} \pi_{b,\beta}(x)dx, \int_{\R_+}(b + \beta x) \pi_{b,\beta}(x)dx \right)^{\top}.
    \]
    An application of Slutsky's theorem proves the assertion.  
\end{proof}

Condition \eqref{eq: VCIR boundary} and the stronger condition \eqref{eq: VCIR negative moment bound} are both variants of boundary non-attainment for the VCIR process. For the Markovian Cox-Ingersoll-Ross process (i.e. $K \equiv 1$), such conditions can be directly verified from the explicit form of the density of $\mathcal{L}_{b,\beta}(X_t)$ and $\pi_{b,\beta}$ provided that $\sigma$ and $b$ satisfy the so-called Feller condition $2b/\sigma^2 > 3$. A Volterra analogue of the Feller condition was recently obtained in \cite{BP24} for regular kernels. Non-regular kernels (e.g. the fractional kernel) have not yet been treated in the literature. Hence, the joint estimation of $(b,\beta)$ is yet conditionally given assumptions \eqref{eq: VCIR boundary} and \eqref{eq: VCIR negative moment bound}. The validity of these assumptions, i.e. a Volterra analogue of the Feller conditions, shall be studied in future research. It is interesting to note that such boundary attainment conditions may be also closely linked with the existence of strong solutions since the diffusion coefficient $x \longmapsto \sqrt{x}$ is smooth on $(0,\infty)$, and the Lipschitz continuity is only violated at $x = 0$.

Finally, we briefly comment on the estimation of parameters where either $b$ or $\beta$ is known. The proofs of the remarks below can be obtained in the same way (essentially simplified) as for the joint estimation of parameters.

\begin{Remark}\label{remark: VCIR estimation beta}
    Suppose that $b \geq 0$ is known, conditions (K1) and (K2) are satisfied, that $\beta < 0$ and $\sigma > 0$. Then the MLE for $\beta$ given by  
    \[
     \widehat{\beta}_T(X) = \frac{Z_T(X) - bT}{\int_0^T X_t dt}
     = \beta + \sigma \frac{\int_0^T \sqrt{X_t}dB_t}{\int_0^T X_tdt}
    \]
    is strongly consistent. Moreover, it is consistent in probability and asymptotically normal locally uniformly in the parameters. If the partition $\mathcal{P}_n$ satisfies $t_n^{(n)} \longrightarrow \infty$ and \eqref{eq: mesh size}, then the discretised estimator
    \[
        \widehat{\beta}^{(n)}(X) = \frac{Z_{t_n^{(n)}}^{\mathcal{P}_n} - t_n^{(n)}b}{\sum_{[u,v] \in \mathcal{P}_n} X_u (v-u) }
    \]
    is consistent in probability and asymptotically normal locally uniformly in the parameters. Remark that in this case, no assumptions of the form \eqref{eq: VCIR negative moment bound} are required.
\end{Remark}

In the next remark, we discuss the estimation of $b$ given that $\beta < 0$ is known. 

\begin{Remark}\label{remark: VCIR estimation b}
    Suppose that $\beta < 0$ is known and the same conditions as in Theorem \ref{thm: discrete MLE for VCIR} are satisfied. Then the MLE for $b$ is given by 
    \[
     \widehat{b}_T(X) = \frac{ \int_0^T \frac{1}{X_t}dZ_t(X) - \beta T}{\int_0^T \frac{1}{X_t}dt}
     = b + \sigma \frac{\int_0^T \frac{1}{\sqrt{X_t}} dB_t}{\int_0^T \frac{1}{X_t}dt}
    \]
    is strongly consistent, consistent in probability and asymptotically normal locally uniformly in the parameters. Moreover, its discretised version
    \[
        \widehat{b}^{(n,m)}(X) = \frac{ \sum_{[u,v] \in \mathcal{P}_n} X_u^{-1}\left( Z_v^{\mathcal{P}_m} - Z_u^{\mathcal{P}_m}\right)  - \beta t_n^{(n)}  }{ \sum_{[u,v] \in \mathcal{P}_n} X_u^{-1}(v-u) }
    \]
    is consistent in probability and asymptotically normal locally uniformly in the parameters. 
\end{Remark}

\subsection{Application to equidistant partitions}

For applications with discrete observations, it is natural to choose an equidistant partition where $t_k = k |\mathcal{P}_n|$. Below we briefly discuss conditions \eqref{eq: mesh size}, \eqref{eq: mesh size 1}, (K1), and (K2) in this context with particular focus on the choices $|\mathcal{P}_n| = n^{-\eta}$ with $\eta \in (0,1)$ whence $t_n^{(n)} = n^{1-\eta} \longrightarrow \infty$, and $|\mathcal{P}_n| = \frac{\log(n)}{n}$ whence $t_n^{(n)} = \log(n) \longrightarrow \infty$. These examples allow for a trade-off between long-horizon (big $t_n^{(n)}$) and discretisation depth.

\begin{Example}[fractional kernel]\label{exmpl:fractional_kernel}
    The fractional kernel $K(t) = t^{\alpha-1}/\Gamma(\alpha)$ with $\alpha \in (1/2,1)$ satisfies conditions (K1) and (K2) with $\gamma = \alpha - 1/2$. Moreover, we have $L(ds) = \frac{t^{-\alpha}}{\Gamma(1-\alpha)}ds$ and hence $L((0,t]) = \Gamma(\alpha)^{-1}t^{1-\alpha}$.
    \begin{enumerate}
    \item[(a)] Let $|\mathcal{P}_n| = n^{-\eta}$ with $\frac{1}{2\alpha} < \eta < 1$. Since $2\alpha \eta > 1$, condition \eqref{eq: mesh size} is satisfied while condition \eqref{eq: mesh size 1} holds for any sequence $(m(n))_{n \geq 1}$ with the property 
    \[
        \lim_{n \to \infty} \frac{n^{1 + (1-\eta)\left(\frac{3}{2}-\alpha\right)}}{m(n)^{\left(\alpha - \frac{1}{2}\right)\eta}} = 0.
    \]

    \item[(b)] Let $|\mathcal{P}_n| = \frac{\log(n)}{n}$. Then conditions \eqref{eq: mesh size} holds, and \eqref{eq: mesh size 1} reduces to 
    \[
        \lim_{n \to \infty}\left(\frac{\log(m(n))}{\log(n)}\right)^{\alpha - 1/2}\frac{n}{m(n)^{\alpha - 1/2}} = 0.
    \]
    \end{enumerate}
\end{Example}

The next example illustrates the behaviour of a kernel that is not regular but essentially behaves like the fractional kernel with $\alpha \nearrow 1$.

\begin{Example}[log-kernel]\label{exmpl:log_kernel}
    Let $K(t) = \log(1+1/t)$. Then conditions (K1) and (K2) are satisfied with any choice of $\gamma \in (0,1/2)$. Then \cite[Remark 5.1]{BFK24} gives $L((0,t]) \leq C \log(1+1/t)^{-1}$.
    \begin{enumerate}
        \item[(a)] Let $|\mathcal{P}_n| = n^{-\eta}$ for $\frac{1}{2} < \eta < 1$, then condition \eqref{eq: mesh size} is satisfied and \eqref{eq: mesh size 1} is satisfied if for some $\gamma \in (0,1/2)$
    \begin{align*}
        \lim_{n \to \infty}n^{\frac{3 - \eta}{2}} m(n)^{- \eta \gamma} = 0. 
    \end{align*}

        \item[(b)] Let $|\mathcal{P}_n| = \frac{\log(n)}{n}$. Then \eqref{eq: mesh size} holds and \eqref{eq: mesh size 1} is satisfied if for some $\gamma \in (0,1/2)$
        \[
        \lim_{n \to \infty} n \left( \frac{\log(m(n))}{m(n)}\right)^{\gamma}. = 0.
        \]
    \end{enumerate}
\end{Example}

Below, we illustrate our conditions for the regular kernel obtained as a linear combination of exponentials. The latter plays a central role in multi-factor Markovian approximations, see \cite{MR3934104, MR4521278}.

\begin{Example}[exponential kernel]\label{exmpl:exp_kernel}
    Let $K(t) = \sum_{i=1}^{N}c_i e^{-\lambda_i t}$ for some $c_1,\dots, c_N > 0$ and $\lambda_1,\dots, \lambda_N \geq 0$. Then condition (K1) holds for $\gamma = 1/2$ and (K2) satisfied. Using \cite[Remark 5.1]{BFK24} we obtain:
    \begin{enumerate}
        \item[(a)] Let $|\mathcal{P}_n| = n^{-\eta}$ with $0 < \eta < 1$, then \eqref{eq: mesh size} holds, and \eqref{eq: mesh size 1} is implied by
        \[
            \lim_{n \to \infty} n^{\frac{1+\eta}{2}}m(n)^{-\frac{1}{2}} = 0.
        \]

        \item[(b)] Let $|\mathcal{P}_n| = \frac{\log(n)}{n}$. Then \eqref{eq: mesh size} holds and \eqref{eq: mesh size 1} is satisfied if 
        \[
        \lim_{n \to \infty} n \log(n)^{\frac{1}{2}} \left( \frac{\log(m(n))}{m(n)}\right)^{\frac{1}{2}} = 0.
        \]
    \end{enumerate}
\end{Example}

Similarly, we could also consider kernels, e.g., of the form $\log(1 +  t^{-\alpha})$ and $e^{-t^{\alpha}}$ with $\alpha \in (0,1)$, and also $\log(1+t)/t$.

\section{Numerical experiments}

In this section, we perform numerical experiments that confirm our theoretical findings. To be more precise, we demonstrate the accuracy of the discrete high-frequency estimation for the shifted fractional kernel $K_{\alpha}(t) = (t + \varepsilon)^{\alpha - 1}/\Gamma(\alpha)$, where $\varepsilon \geq 0$ is a regularising shift. As a first step, we simulate sample paths of the Volterra Cox–Ingersoll–Ross process \eqref{eq: VCIR} using a discrete-time Euler-type scheme adapted to the memory structure of Volterra equations.
We employ an equidistant time discretisation $t_k^{(n)} = \frac{k}{n}T$ for a fixed time horizon $T > 0$ and a fixed number of time steps $n \in \mathbb{N}$. We write $t_k := t_k^{(n)}$ for ease of notation. A known drawback of applying Euler-type schemes to square-root processes is that intermediate approximations can become negative. To mitigate this issue and preserve the non-negativity of the process, we use the following truncated Euler scheme:
\begin{align*}
    \widehat{X}_{k+1} = \left( x_0 + \frac{T}{n} \sum_{i=0}^{k} K_{\alpha}(t_{k+1} - t_i) \big( b + \beta \widehat{X}_i \big) + \sigma \sqrt{\frac{T}{n}} \sum_{i=0}^{k} K_{\alpha}(t_{k+1} - t_i) \sqrt{(\widehat{X}_i)_+} \, \xi_{i+1} \right)_+,
\end{align*}
where $\xi_1, \dots, \xi_n$ is a sequence of independent standard Gaussian random variables, and $(x)_+ = \max\{0, x\}$ denotes the positive part. Using $(\cdot)_+$ ensures the simulated process remains non-negative. For further details on numerical simulation methods in this context, we refer to \cite{MR3934104, alfonsi2023, alfonsi2022, MR4521278, BBD08}.

Recall that the numerical approximation of $Z(X)$ given by \eqref{eq: Z process} appears for the parameter estimation in the MLE. While our estimates from Section 3 justify its convergence, it turns out that it is highly sensitive to $\alpha$. For smaller values of $\alpha$, convergence becomes much slower, and the discretisation depth needs to be strongly increased to obtain desirable results. To circumvent this issue and obtain a clear separation between the error arising from the estimation procedure and the error introduced by approximating $Z(X)$ via \eqref{eq: Z process}, below we introduce directly the discredited semimartingale process $\widehat{Z}_{k}$ via
\begin{align*}
    \widehat{Z}_{k+1} = \widehat{Z}_0 + \sum_{i=0}^{k} \left( b + \beta \widehat{X}_i \right) \frac{T}{n} + \sigma \sum_{i=0}^{k} \sqrt{(\widehat{X}_i)_+} \sqrt{\frac{T}{n}} \, \xi_{i+1}.
\end{align*}
This formulation aggregates the drift and diffusion contributions using the same Brownian increments as in the simulation of $\widehat{X}_k$. It would be interesting, but certainly outside the scope of this work, to study different numerical schemes that would provide faster and more robust convergence results.

For fixed $x_0, b, \beta, \sigma$ given by
\[
    x_0 = 1, \ \ b = 1.2, \ \ \beta = -1, \ \ \sigma = 1.2,
\]
we generate for different choices of $T$, $n$, and Volterra kernels, $N = 1.000$ sample paths. 
For each of these sample paths, we evaluate the discretised maximum-likelihood estimators $(\widehat{b}^{\textbf{MLE}}, \widehat{\beta}^{\textbf{MLE}})$ as described in Theorem \ref{thm: discrete MLE for VCIR}, and the method of moments estimators $(\widehat{b}^{\textbf{MoM}}, \widehat{\beta}^{\textbf{MoM}})$ for the true fractional kernel described in Corollary \ref{cor: VCIR method of moments}. Our results are summarised in the tables \ref{table:VCIR_T_shift}--\ref{table:VCIR_alpha}, of which the following table contains a rough summary.

\begin{table}[ht]
\centering
\begin{tabular}{|l|p{5.5cm}|p{5.5cm}|}
\hline
\textbf{Metric} & \textbf{MLE} (different kernels) & \textbf{MoM} (true fractional kernel) \\
\hline
Accuracy for $\hat{b}$ & Improves rapidly with $T$ \newline very accurate for large $T$ & Biased for small $T$ \newline accurate for large $T$ \\
\hline
Accuracy for $\hat{\beta}$ & Good with large $T$ \newline slightly worse than $\hat{b}$ & Larger errors at small $T$ \newline improves for large $T$ \\
\hline
Effect of $T$ & Major improvement for MLE & Strong improvement with $T$ \\
\hline
Effect of $\Delta t_{k+1}$ & Minor effect & Slight improvement \\
\hline
Effect of $\alpha$ & Very robust & Sensitive: requires much larger $T$ for small $\alpha$ (long memory) \\
\hline
Effect of shift & Minimal effect; still robust & \\
\hline
\end{tabular}
\caption{Comparison of MLE and MoM Estimators}
\label{tab:mle_mom_comparison}
\end{table}

From these tables, we observe that the MLE consistently outperforms the MoM across a wide range of parameter settings. The MLE exhibits strong robustness with respect to variations in the discretisation depth $\Delta t_{k+1}$, the time horizon $T$, the Hurst index $\alpha$, and even under the presence of additive shifts in the fractional kernel. In particular, the relative estimation errors of $\widehat{b}^{\textbf{MLE}}$ and $\widehat{\beta}^{\textbf{MLE}}$ decrease rapidly with increasing $T$ as demonstrated in Figures \ref{fig:VCIR_b_estimate}--\ref{fig:VCIR_beta_estimate}. We observe that convergence holds regardless of whether the boundary condition \eqref{eq: negative moments} is violated or not. Hence, we suspect that more sophisticated sequential estimation methods may guarantee some weaker forms of convergence beyond condition \eqref{eq: negative moments}. 

Concerning the MoM estimator for the true fractional kernel \eqref{eq: fractional kernel} (i.e. shift $\varepsilon=0$), as given in Corollary \ref{cor: VCIR method of moments}, the numerical performance improves markedly with increasing time horizon $T$, while it remains biased for small $T$ (cf. Table \ref{table:VCIR_T_dt_MOM}). A moderate improvement is also observed with finer discretisation $\Delta t_{k+1}$ (cf. Figures \ref{fig:VCIR_b_MOM_estimate} and \ref{fig:VCIR_beta_MOM_estimate})). These observations align with our theoretical consistency results, which apply to both high- and low-frequency observations. Moreover, the parameter $\alpha$ plays a crucial role in the convergence rate: for values close to 1, accurate estimates are achieved even at moderate $T$, whereas for smaller $\alpha$, substantially larger time horizons are needed to capture the long-memory structure and obtain stable estimates. This indicates sensitivity to $\alpha$ caused by the slower rate of convergence for the Law of Large Numbers.

It remains an open question to precisely quantify the impact of discretisation schemes on parameter convergence, particularly for non-semimartingale implementations for the discretised process $Z^{\mathcal{P}_m}$. In \cite{BK13}, the authors were able to remove these integrals by a suitable application of the Ito formula. A similar method seems not to apply in our case. Further research beyond the scope of this work may shed more light on the convergence of the numerical schemes on large time-scales.

\appendix 
\section{Uniform weak convergence} \label{sec:uniform weak convergence}

Let $(E, d)$ be a complete separable metric space and $\mathcal{B}(E)$ the Borel-$\sigma$ algebra on $E$. Let $p \geq 0$ and let $\mathcal{P}_p(E)$ be the space of Borel probability measures on $E$ such that $\int_E d(x,x_0)^p \mu(dx) < \infty$ holds for some fixed $x_0 \in E$. Here and below, we let $\Theta$ be an abstract index set (of parameters). 

\begin{Definition}
    Let $p \geq 0$ and $(\mu^{\theta}_n)_{n \geq 1, \theta \in \Theta}, (\mu^{\theta})_{\theta \in \Theta} \subset \mathcal{P}_p(E)$. Then $(\mu^{\theta}_n)_{n \geq 1, \theta \in \Theta}$ converges weakly in $\mathcal{P}_p(E)$ to $(\mu^{\theta})_{\theta \in \Theta}$ uniformly on $\Theta$, if
    \begin{align}\label{eq: weak convergence}
        \lim_{n \to \infty} \sup_{\theta \in \Theta}\left|\int_E f(x)\mu_n^{\theta}(dx) - \int_{E}f(x)\mu^{\theta}(dx) \right| = 0
    \end{align}
    holds for each continuous function $f: E \longrightarrow \R$ such that there exists $C_f > 0$ with $|f(x)| \leq C_f(1 + d(x_0,x)^p)$ for $x \in E$.
\end{Definition}

We abbreviate the weak convergence on $\mathcal{P}_p(E)$ uniform on $\Theta$ by $\mu_n^{\Theta} \Lu_p \mu^{\theta}$. In the particular case $p=0$ we write $\mathcal{P}_0(E) = \mathcal{P}(E)$ and denote the convergence by $\mu_n^{\theta} \Lu \mu^{\theta}$ which corresponds to \eqref{eq: weak convergence} for bounded and continuous functions. The next proposition characterises the weak convergence in $\mathcal{P}(E)$ under an additional tightness condition.

\begin{Proposition}\label{prop: 1}
    Let $(\mu^{\theta}_n)_{n \geq 1, \theta \in \Theta}, (\mu^{\theta})_{\theta \in \Theta} \subset \mathcal{P}(E)$ and suppose that both families of probability measures are tight, i.e. for each $\e > 0$ there exists a compact $K \subset E$ such that
    \begin{align}\label{eq: tightness 2}
     \sup_{\theta \in \Theta}\sup_{n \geq 1}\mu_n^{\theta}(K^{c}) + \sup_{\theta \in \Theta}\mu^{\theta}(K^c) < \e.
    \end{align}
    Then the following assertions are equivalent:
    \begin{enumerate}
        \item[(a)] $\mu_n^{\theta} \Lu \mu^{\theta}$.
        \item[(b)] \eqref{eq: weak convergence} holds for each bounded and uniformly continuous function $f: E \longrightarrow \R$.
        \item[(c)] \eqref{eq: weak convergence} holds for each bounded and Lipschitz continuous function $f: E \longrightarrow \R$.
    \end{enumerate}
\end{Proposition}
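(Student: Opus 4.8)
The implications (a) $\Rightarrow$ (b) $\Rightarrow$ (c) are immediate, since every Lipschitz function is uniformly continuous and every uniformly continuous function is continuous; the only real content is (c) $\Rightarrow$ (a). The plan is to fix a bounded continuous $f$ with $\|f\|_\infty \le M$, fix $\e > 0$, and use tightness to pick a compact $K \subset E$ as in \eqref{eq: tightness 2} with the $\e$ there replaced by $\e/(6M)$. On the compact set $K$, $f$ is uniformly continuous, so one can find a bounded Lipschitz function $g: E \longrightarrow \R$ with $\|g\|_\infty \le M$ and $\sup_{x \in K}|f(x) - g(x)| < \e/6$; the standard way to produce such a $g$ is via an inf-convolution (Lipschitz regularization) $g_\delta(x) = \inf_{y \in E}(f(y) + \delta^{-1}d(x,y))$ truncated at height $M$, which is $\delta^{-1}$-Lipschitz, satisfies $\|g_\delta\|_\infty \le M$, and converges to $f$ uniformly on $K$ by uniform continuity of $f$ on $K$ as $\delta \searrow 0$.

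With this $g$ in hand I would split, uniformly in $\theta$,
\begin{align*}
 \left| \int_E f\, d\mu_n^\theta - \int_E f\, d\mu^\theta \right|
 &\le \left| \int_E (f-g)\, d\mu_n^\theta \right|
 + \left| \int_E g\, d\mu_n^\theta - \int_E g\, d\mu^\theta \right|
 + \left| \int_E (g-f)\, d\mu^\theta \right|.
\end{align*}
For the first term, bound $\int_E |f-g|\, d\mu_n^\theta \le \int_K |f-g|\, d\mu_n^\theta + 2M\mu_n^\theta(K^c) < \e/6 + 2M \cdot \e/(6M) = \e/2$, uniformly in $n$ and $\theta$; the third term is bounded by $\e/6 + 2M\mu^\theta(K^c) < \e/2$ in the same way. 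The middle term tends to $0$ uniformly in $\theta$ as $n \to \infty$ by hypothesis (c), since $g$ is bounded and Lipschitz. Hence $\limsup_{n\to\infty}\sup_\theta |\int f\, d\mu_n^\theta - \int f\, d\mu^\theta| \le \e$, and letting $\e \searrow 0$ gives (a).

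I do not expect a serious obstacle here; the argument is the uniform-in-$\theta$ version of the classical fact that weak convergence can be tested against Lipschitz functions, and the only place the parameter $\theta$ enters nontrivially is through the \emph{uniform} tightness \eqref{eq: tightness 2}, which is exactly what makes the tail estimates $\mu_n^\theta(K^c)$ and $\mu^\theta(K^c)$ controllable independently of $\theta$. The mildly delicate point is simply to make sure the Lipschitz regularization $g_\delta$ is chosen \emph{before} taking $\sup_\theta$ and $\lim_n$ — i.e. $g$ depends on $\e$ (through $K$ and $\delta$) but not on $\theta$ or $n$ — so that the middle term genuinely falls under assumption (c). One should also note that since the functions involved are bounded, the value of $p$ plays no role and we are working in $\mathcal{P}(E) = \mathcal{P}_0(E)$ throughout.
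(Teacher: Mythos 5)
Your argument is correct and essentially identical to the paper's proof: the implications (a)$\Rightarrow$(b)$\Rightarrow$(c) are immediate, and for (c)$\Rightarrow$(a) both you and the paper approximate a bounded continuous $f$ by a bounded Lipschitz $g$ uniformly on the compact set $K$ supplied by \eqref{eq: tightness 2} and split $\left|\int_E f\,d\mu_n^{\theta}-\int_E f\,d\mu^{\theta}\right|$ into the same three terms (the $g$-term handled by hypothesis (c), the $f-g$ terms by the approximation on $K$ plus uniform tightness off $K$), the only difference being that the paper obtains $g$ from a Stone--Weierstrass density argument while you construct it explicitly by inf-convolution. The one point to tighten is your justification that $g_\delta \to f$ uniformly on $K$: since the infimum defining $g_\delta$ ranges over all of $E$ and not just over $K$, uniform continuity of $f$ restricted to $K$ is not by itself sufficient, but the claim follows from Dini's theorem, because the Lipschitz (hence continuous) functions $g_\delta$ increase pointwise to the continuous limit $f$ as $\delta \searrow 0$.
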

\begin{proof}
    The implications $(a) \Rightarrow (b) \Rightarrow (c)$ are clear. Let us prove $(c) \Rightarrow (a)$. Let $f: E \longrightarrow \R$ be continuous and bounded, and let $\e > 0$ be arbitrary. Choose $K_{\e} \subset E$ compact as in the tightness condition. Then there exists\footnote{apply e.g. the Stone-Weierstrass theorem for the subalgebra $A = \{1\} \cup \{d_x : x \in E\}$ where $d_x(y) = \min\{1, d(x,y)\}$ of $C_b(E)$ equipped with the compact-open topology. Hence the space of bounded Lipschitz continuous functions is dense in $C_b(E)$ with respect to the compact open topology.} a Lipschitz continuous function $g_{K_{\e}}: E \longrightarrow \R$ such that $\sup_{x \in K_{\e}}|f(x) - g_{K_{\e}}(x)| < \e$ and $\| g_{K_{\e}}\|_{\infty} \leq \|f\|_{\infty} + 1$. Then we obtain from the tightness condition 
    \begin{align*}
        \left| \int_{E}fd\mu_n^{\theta} - \int_E f d\mu^{\theta}\right|
        &\leq \left| \int_E g_{K_{\e}} d\mu_n^{\theta} - \int_E g_{K_{\e}} d\mu^{\theta}\right|
        \\ &\qquad + \left| \int_{K_{\e}} (f-g_{K_{\e}}) d\mu_n^{\theta}\right| + \left|\int_{K_{\e}} (f-g_{K_{\e}}) d\mu^{\theta}\right|
        \\ &\qquad + \left| \int_{K_{\e}^c} (f-g_{K_{\e}}) d\mu_n^{\theta}\right| + \left|\int_{K_{\e}^c} (f-g_{K_{\e}}) d\mu^{\theta}\right|
        \\ &\leq \left| \int_E g_{K_{\e}} d\mu_n^{\theta} - \int_E g_{K_{\e}} d\mu^{\theta}\right| + 2\e + (2\|f\| + 1)\e 
    \end{align*}
    Since $g_{K_{\e}}$ is Lipschitz continuous, we obtain from (c) 
    \[
     \limsup_{n \to \infty}\sup_{\theta \in \Theta}\left| \int_{E}fd\mu_n^{\theta} - \int_E f d\mu^{\theta}\right|
     \leq 2\e + (2\|f\| + 1)\e.
    \]
    Letting $\e \searrow 0$ proves $\mu_n^{\theta} \Lu \mu^{\theta}$.
\end{proof}

In the next proposition, we provide a characterisation for convergence in $\mathcal{P}_p(E)$.

\begin{Proposition}\label{prop: P characterisation}
    Let $(\mu^{\theta}_n)_{n \geq 1}, (\mu^{\theta})_{\theta \in \Theta} \subset \mathcal{P}_p(E)$ with $p \in (0,\infty)$, and suppose that
    \begin{align}\label{eq: tightness 1}
        \lim_{R \to \infty}\sup_{\theta \in \Theta}\int_E \1_{\{d(x,x_0) > R\}}d(x,x_0)^p \mu^{\theta}(dx) = 0
    \end{align}
    Then the following are equivalent:
    \begin{enumerate}
        \item[(a)] $\mu_n^{\theta} \Lu_p \mu^{\theta}$. 

        \item[(b)] $(\mu^{\theta}_n)_{n \geq 1} \Lu \mu^{\theta}$ and
        \[
         \lim_{n \to \infty}\sup_{\theta \in \Theta}\left| \int_E d(x,x_0)^p \mu_n^{\theta}(dx) - \int_E d(x,x_0)^p \mu^{\theta}(dx)\right| = 0.
        \]

        \item[(c)] $(\mu^{\theta}_n)_{n \geq 1} \Lu \mu^{\theta}$ and
        \[
         \lim_{R \to \infty}\limsup_{n \to \infty}\sup_{\theta \in \Theta}\int_E \1_{\{d(x,x_0) > R\}}d(x,x_0)^p \mu_n^{\theta}(dx) = 0.
        \]
    \end{enumerate}
\end{Proposition}
\begin{proof}
    $(a) \Rightarrow (b)$. This is clear. 
    
    $(b) \Rightarrow (c)$. Take $R > 0$ and let $\varphi_R: E \longrightarrow [0,1]$ be continuous with $\varphi_R(x) = 1$ for $d(x_0,x) \leq R$ and $\varphi_R(x) = 0$ for $d(x_0,x) > R+1$. Then $\1_{\{d(x_0,x) > R+1\}} = \1_{\{d(x_0,x) > R+1\}}(1 - \varphi_R(x))$ and $1 - \varphi_R(x) \leq \1_{\{d(x_0,x) > R\}}$, and hence
    \begin{align*}
        &\ \int_E d(x_0,x)^p \1_{\{d(x_0,x) > R+1\}}\mu_n^{\theta}(dx)
        \\ &\leq \int_E d(x_0,x)^p (1 - \varphi_R(x))\mu_n^{\theta}(dx)
        \\ &= \int_E d(x_0,x)^p \left( \mu_n^{\theta}(dx) - \mu^{\theta}(dx)\right)
         + \int_E d(x_0,x)^p \left( 1 - \varphi_R(x) \right) \mu^{\theta}(dx)
        \\ &\qquad + \int_E d(x_0,x)^p \varphi_R(x) \left( \mu^{\theta}(dx) - \mu_n^{\theta}(dx)\right) 
        \\ &\leq  \left|\int_E d(x_0,x)^p \left(\mu_n^{\theta}(dx) - \mu^{\theta}(dx)\right) \right|
        \\ &\qquad +  \int_E d(x_0,x)^p \1_{\{d(x_0,x) > R\}} \mu^{\theta}(dx)
        \\ &\qquad + \left| \int_E d(x_0,x)^p \varphi_R(x) \left(\mu^{\theta}(dx) - \mu_n^{\theta}(dx) \right) \right|.
    \end{align*}
    Taking the supremum over $\theta \in \Theta$ and then $\limsup_{n \to \infty}$ shows that the first and the last term converge to zero due to assumption (b). Finally, letting $R \to \infty$, one finds that the second term tends to zero by tightness of $(\mu^{\theta})_{\theta \in \Theta}$. 
    
    $(c) \Rightarrow (a)$. Let $f: E \longrightarrow \R$ be continuous such that $|f(x)| \leq C_f(1 + d(x_0,x)^p)$ holds for $x \in E$. Let $R > 0$ and $\varphi_R$ be given as above. Using $1 - \varphi_R(x) \leq \1_{\{d(x_0,x) > R\}}$ we obtain
    \begin{align*}
        \left| \int_E f d\mu_n^{\theta} - \int_E f d\mu^{\theta}\right|
        &\leq \left| \int_E \varphi_R f d\mu_n^{\theta} - \int_E \varphi_R f d\mu^{\theta}\right| 
        \\ &\qquad + C_f \int_E \1_{\{d(x_0,x) > R\}}\left( 1 + d(x_0,x)^p\right)\left( \mu_n^{\theta}(dx) + \mu^{\theta}(dx)\right)
    \end{align*}
    Taking the supremum over $\theta \in \Theta$, then $\limsup_{n \to \infty}$, and finally $R \to \infty$, proves property (a).
\end{proof}

For applications to limit theorems, we need analogues of the continuous mapping theorem and Slutsky's theorem where convergence is uniform on $\Theta$. To simplify the notation, we formulate it with respect to random variables $X_n^{\theta}, X^{\theta}$ defined on some probability space $(\Omega, \F, \P)$. Thus, we say that $X_n^{\theta} \Lu X^{\theta}$ if $\mathcal{L}(X_n^{\theta}) \Lu \mathcal{L}(X^{\theta})$. Finally, we say that $X_n^{\theta} \longrightarrow X^{\theta}$ in probability uniformly on $\Theta$, if 
\[
 \lim_{n \to \infty}\sup_{\theta \in \Theta}\P[ d(X_n^{\theta}, X^{\theta}) > \e] = 0, \qquad \forall \e > 0.
\]

As a first step, we need the following lemma. 

\begin{Lemma}\label{lemma: product}
    Let $(E_0,d_0), (E_1,d_1)$ be complete separable metric spaces. Suppose that  $(\mu_n^{\theta})_{n \geq 1, \theta \in \Theta}, (\mu^{\theta})_{\theta \in \Theta} \subset \mathcal{P}(E_0 \times E_1)$ satisfy \eqref{eq: tightness 2} on $E = E_0 \times E_1$. If 
    \[
     \lim_{n \to \infty}\sup_{\theta \in \Theta}\left| \int_{E_0 \times E_1} f(x)g(y) \mu_n^{\theta}(dx,dy) - \int_{E_0 \times E_1} f(x)g(y)\mu^{\theta}(dx,dy) \right| = 0
    \]
    holds for all bounded and Lipschitz continuous functions $f: E_0 \longrightarrow \R$ and $g: E_1 \longrightarrow \R$, then $\mu_n^{\theta} \Lu \mu^{\theta}$.
\end{Lemma}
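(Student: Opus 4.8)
The plan is to reduce the claim, via Proposition~\ref{prop: 1}, to convergence against bounded Lipschitz functions on $E_0\times E_1$, and then to approximate an arbitrary such function by a \emph{finite} sum of products $f(x)g(y)$ of bounded Lipschitz functions whose supremum norm can be kept under control as the approximation improves. Since both families are tight by \eqref{eq: tightness 2}, Proposition~\ref{prop: 1} shows that $\mu_n^\theta\Lu\mu^\theta$ is equivalent to \eqref{eq: weak convergence} for every bounded Lipschitz $h\colon E_0\times E_1\to\R$; dividing $h$ by $\max\{\|h\|_\infty,\operatorname{Lip}(h),1\}$ we may assume $\|h\|_\infty\le1$ and that $h$ is $1$-Lipschitz for the product metric.

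First I would fix $\e>0$ and use \eqref{eq: tightness 2} to pick a compact $K\subset E_0\times E_1$ with $\sup_{\theta,n}\mu_n^\theta(K^c)+\sup_\theta\mu^\theta(K^c)<\e$; replacing $K$ by $K_0\times K_1$, where $K_i$ is the (compact) image of $K$ under the $i$-th coordinate projection, the same bound holds with $K_0\times K_1$ in place of $K$. Next I would choose a finite $\e$-net $x_1,\dots,x_p\in K_0$ of $K_0$ and normalise the Lipschitz bump functions $x\mapsto(\e-d_0(x,x_j))_+$ to obtain $\rho_1,\dots,\rho_p\colon E_0\to[0,1]$ that are Lipschitz, vanish wherever $d_0(x,x_j)>\e$, satisfy $\sum_j\rho_j\le1$ on $E_0$ and $\sum_j\rho_j\equiv1$ on $K_0$, and set
\[
 H(x,y):=\sum_{j=1}^{p}\rho_j(x)\,h(x_j,y),\qquad(x,y)\in E_0\times E_1 .
\]
Each summand is the product of the bounded Lipschitz function $\rho_j$ on $E_0$ and the bounded Lipschitz function $h(x_j,\cdot)$ on $E_1$ (with $\|h(x_j,\cdot)\|_\infty\le1$ and Lipschitz constant $\le1$), so the hypothesis applies to each of them. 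Moreover $\|H\|_\infty\le\bigl(\sum_j\rho_j\bigr)\|h\|_\infty\le1$, and on $K_0\times K_1$ we get $|h(x,y)-H(x,y)|\le\sum_j\rho_j(x)|h(x,y)-h(x_j,y)|\le\e$, because $\rho_j(x)\neq0$ forces $d_0(x,x_j)\le\e$.

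Combining these facts finishes the argument: for any probability measure $\nu$ on $E_0\times E_1$ with $\nu((K_0\times K_1)^c)<\e$ one has $\int|h-H|\,d\nu\le\e+2\nu((K_0\times K_1)^c)<3\e$ since $\|h-H\|_\infty\le2$, and applying this to $\mu_n^\theta$ and to $\mu^\theta$ gives
\[
 \sup_{\theta}\Bigl|\int h\,d\mu_n^\theta-\int h\,d\mu^\theta\Bigr|\le\sup_{\theta}\Bigl|\int H\,d\mu_n^\theta-\int H\,d\mu^\theta\Bigr|+6\e .
\]
As $H$ is a finite sum of products of bounded Lipschitz functions, the hypothesis gives $\sup_\theta|\int H\,d\mu_n^\theta-\int H\,d\mu^\theta|\to0$ as $n\to\infty$, so $\limsup_n\sup_\theta|\int h\,d\mu_n^\theta-\int h\,d\mu^\theta|\le6\e$; letting $\e\downarrow0$ and invoking Proposition~\ref{prop: 1} yields $\mu_n^\theta\Lu\mu^\theta$.

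The step I expect to be the delicate one is the construction of $H$. A plain Stone--Weierstrass approximation on the compact space $K_0\times K_1$ would also produce a finite sum of products, but with no control of its supremum norm as $\e\downarrow0$, and then the tail contribution $\|H\|_\infty\,\nu((K_0\times K_1)^c)$ could not be made uniformly small; the partition-of-unity construction is exactly what forces $\|H\|_\infty\le\|h\|_\infty$, so that the tightness bound $<\e$ turns into a genuinely vanishing error. The rest is bookkeeping, together with the observation that for each fixed $\e$ only finitely many fixed product functions appear, so the hypothesis may be applied to them term by term, uniformly in $\theta$.
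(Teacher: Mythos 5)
Your proof is correct, but it takes a different route through the key approximation step than the paper does. The paper's own proof is a one-liner: it applies the Stone--Weierstrass theorem to the algebra generated by $A=\{f\otimes g\}$ to get density of $\mathrm{lin}(A)$ in $C_b(E_0\times E_1)$ for the compact-open topology, and then says the rest goes "as in Proposition~\ref{prop: 1}". That argument implicitly needs what Proposition~\ref{prop: 1} used explicitly, namely an approximant on the compact set whose \emph{global} sup-norm is controlled (there, $\|g_{K_\e}\|_\infty\le\|f\|_\infty+1$, obtainable for Lipschitz functions by truncation); for $\mathrm{lin}(A)$ truncation leaves the class, so the sup-norm control is not automatic from Stone--Weierstrass and must be arranged separately (e.g.\ by multiplying the approximant with a product cut-off $\chi_0\otimes\chi_1$). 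Your partition-of-unity construction $H=\sum_j\rho_j\otimes h(x_j,\cdot)$ over an $\e$-net of the projected compact $K_0$ delivers exactly this missing control for free, since $\sum_j\rho_j\le1$ forces $\|H\|_\infty\le\|h\|_\infty$, and it only uses the hypothesis for finitely many fixed product functions, so the uniformity in $\theta$ is immediate. In short: the paper buys brevity by quoting Stone--Weierstrass and the template of Proposition~\ref{prop: 1}, while your argument is more elementary and self-contained and makes the tail estimate airtight; the only detail you gloss over is the normalisation of the bumps (take $\rho_j=\psi_j/\max\{\sum_k\psi_k,\,c\}$ with $\psi_j=(\e-d_0(\cdot,x_j))_+$ and $c=\min_{K_0}\sum_k\psi_k>0$, which is Lipschitz, $\le1$ globally and $\equiv1$ on $K_0$), a routine point.
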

\begin{proof}
    An application of the Stone-Weierstrass theorem to  
    \[
     A = \{ h = f \otimes g \ : \ f \in \mathrm{Lip_b}(E_0), \ g \in \mathrm{Lip_b}(E_1) \}
    \]
    shows that $\mathrm{lin}(A)$ is dense in $C_b(E_0 \times E_1)$ with respect to the compact open topology. The proof can be deduced in the same way as Proposition \ref{prop: 1}.
\end{proof}

Next, we provide some simple versions of the continuous mapping theorem adapted towards convergence uniform on $\Theta$.

\begin{Theorem}[Uniform continuous mapping theorem]\label{thm: uniform continuous mapping theorem}\label{thm: uniform CMT}
 Let $(X_n^{\theta})_{n \geq 1, \theta \in \Theta}$ and $(X^{\theta})_{\theta \in \Theta}$ be $E$-valued random variables on some probability space $(\Omega, \F, \P)$. Let $F: E \longrightarrow E'$ be measurable where $(E',d')$ is another complete and separable metric space. The following assertions hold:
 \begin{enumerate}
     \item[(a)] If $X_n^{\theta} \longrightarrow c(\theta)$ in probability uniformly on $\Theta$, $c(\theta)$ is deterministic and $F$ is uniformly continuous in $(c(\theta))_{\theta \in \Theta}$, then $F(X_n^{\theta}) \longrightarrow F(c(\theta))$ in probability uniformly on $\Theta$.

    \item[(b)] If $X_n^{\theta} \longrightarrow X^{\theta}$ in probability uniformly on $\Theta$ and $F$ is uniformly continuous, then $F(X_n^{\theta}) \longrightarrow F(X^{\theta})$ in probability uniformly on $\Theta$. 

    \item[(c)] Suppose there exists $\mu \in \mathcal{P}(E)$ such that for each $\e > 0$ there exists $\delta > 0$ with
    \begin{align}\label{eq: locally absolute continuity}
     \mu(A) < \delta \quad \Longrightarrow \quad \sup_{\theta \in \Theta}\P[X^{\theta} \in A] < \e.
    \end{align}
    for any Borel set $A \subset E$. If $X_n^{\theta} \Longrightarrow X^{\theta}$ and the set of discontinuity points $D_F \subset E$ of $F$ satisfies $\P[X^{\theta} \in D_F] = 0$, then $F(X_n^{\theta}) \Lu F(X^{\theta})$.
 \end{enumerate}
\end{Theorem}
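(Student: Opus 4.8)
\emph{Parts (a) and (b)} reduce to an inclusion of events. For (a) I read ``$F$ uniformly continuous in $(c(\theta))_{\theta\in\Theta}$'' as: for every $\e>0$ there is $\delta>0$ such that $d(x,c(\theta))<\delta$ forces $d'(F(x),F(c(\theta)))<\e$ simultaneously for all $\theta$. Then $\{d'(F(X_n^\theta),F(c(\theta)))\ge\e\}\subseteq\{d(X_n^\theta,c(\theta))\ge\delta\}$ gives $\sup_\theta\P[d'(F(X_n^\theta),F(c(\theta)))\ge\e]\le\sup_\theta\P[d(X_n^\theta,c(\theta))\ge\delta]\to 0$. Part (b) is the same argument with $\delta$ taken from the global uniform continuity of $F$ and the inclusion $\{d'(F(X_n^\theta),F(X^\theta))\ge\e\}\subseteq\{d(X_n^\theta,X^\theta)\ge\delta\}$.

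\emph{Part (c)} is the substantive one. Writing $\mu_n^\theta=\mathcal L(X_n^\theta)$ and $\mu^\theta=\mathcal L(X^\theta)$ (reading $X_n^\theta\Longrightarrow X^\theta$ as $\mu_n^\theta\Lu\mu^\theta$), the assertion $F(X_n^\theta)\Lu F(X^\theta)$ is, by change of variables, the statement that $\sup_\theta\big|\int_E h\,d\mu_n^\theta-\int_E h\,d\mu^\theta\big|\to 0$ for every $h$ of the form $h=g\circ F$ with $g\in C_b(E')$. Such an $h$ is bounded and its discontinuity set $D_h$ is contained in $D_F$, hence $\mu^\theta(D_h)=0$ for all $\theta$. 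I would regularize $h$ by its two Moreau--Yosida envelopes $h^{(\delta)}(x)=\sup_{y\in E}\big(h(y)-\delta^{-1}d(x,y)\big)$ and $h_{(\delta)}(x)=\inf_{y\in E}\big(h(y)+\delta^{-1}d(x,y)\big)$, which are bounded, $\delta^{-1}$-Lipschitz (so in $C_b(E)$), satisfy $h_{(\delta)}\le h\le h^{(\delta)}$, and as $\delta\downarrow 0$ decrease, respectively increase, to $\overline h(x)=h(x)\vee\limsup_{y\to x}h(y)$ and $\underline h(x)=h(x)\wedge\liminf_{y\to x}h(y)$; thus $r_\delta:=h^{(\delta)}-h_{(\delta)}$ decreases to $\omega:=\overline h-\underline h$, with $0\le\omega\le 2\|h\|_\infty$ and $\{\omega>0\}=D_h\subseteq D_F$. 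From the sandwich $\int h\,d\mu_n^\theta-\int h\,d\mu^\theta\le\big(\int h^{(\delta)}d\mu_n^\theta-\int h^{(\delta)}d\mu^\theta\big)+\int r_\delta\,d\mu^\theta$, together with its mirror-image lower bound, and the fact that the bracketed term vanishes uniformly in $\theta$ as $n\to\infty$ for each fixed $\delta$ (because $h^{(\delta)},h_{(\delta)}\in C_b(E)$ and $\mu_n^\theta\Lu\mu^\theta$), everything reduces to proving $\lim_{\delta\downarrow 0}\sup_\theta\int_E r_\delta\,d\mu^\theta=0$.

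This final limit is the only real obstacle, and it is exactly what condition \eqref{eq: locally absolute continuity} is for. Fix $\e>0$ with associated threshold $\delta_0=\delta_0(\e)$ from \eqref{eq: locally absolute continuity}, and fix $\eta>0$. The sets $B^{(m)}_\eta=\{r_{1/m}>\eta\}$ are open, decreasing in $m$, and $A_\eta:=\bigcap_m B^{(m)}_\eta=\{\omega\ge\eta\}\subseteq D_F$, so $\mu^\theta(A_\eta)=0$ for all $\theta$. Hence $C^{(m)}_\eta:=B^{(m)}_\eta\setminus A_\eta$ decreases to the empty set, so continuity from above of the probability measure $\mu$ gives $\mu(C^{(m)}_\eta)\to 0$; choosing $m$ with $\mu(C^{(m)}_\eta)<\delta_0$ and applying \eqref{eq: locally absolute continuity} to $A=C^{(m)}_\eta$ gives $\sup_\theta\mu^\theta(B^{(m)}_\eta)=\sup_\theta\mu^\theta(C^{(m)}_\eta)<\e$. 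Since $\int r_{1/m}\,d\mu^\theta\le\eta+2\|h\|_\infty\,\mu^\theta(B^{(m)}_\eta)$, this yields $\sup_\theta\int r_{1/m}\,d\mu^\theta\le\eta+2\|h\|_\infty\,\e$ for that $m$, and letting $\eta\downarrow 0$ and $\e\downarrow 0$ completes the proof. The genuinely delicate point worth flagging: $\mu$ itself need not be null on $D_F$ — only the limit laws $\mu^\theta$ are — so one must first peel off the $\mu^\theta$-null set $A_\eta$ and only then invoke continuity from above of $\mu$ followed by \eqref{eq: locally absolute continuity}; and since the regularizations $h^{(\delta)},h_{(\delta)}$ are already continuous, no tightness hypothesis in the spirit of Proposition \ref{prop: 1} is needed.
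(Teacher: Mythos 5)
Your proofs of (a) and (b) are the same as the paper's: a $\theta$-independent $\delta$ from the (uniform) continuity of $F$ plus the inclusion of events $\{d'(F(X_n^\theta),F(\cdot))\geq \e\}\subseteq\{d(X_n^\theta,\cdot)\geq\delta\}$. Nothing to add there.

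For (c) you take a genuinely different, and correct, route. The paper argues on the level of sets: for closed $B\subset E'$ it bounds $\limsup_n\sup_\theta\left(\P[F(X_n^\theta)\in B]-\P[F(X^\theta)\in B]\right)$ via the inclusion $\overline{F^{-1}(B)}\subset F^{-1}(B)\cup D_F$ and invokes the uniform Portmanteau theorem of \cite{BH19} twice — once to pass from $X_n^\theta \Lu X^\theta$ to the closed-set criterion, and once to come back from that criterion to $F(X_n^\theta)\Lu F(X^\theta)$; in that write-up the hypothesis \eqref{eq: locally absolute continuity} never appears explicitly, being absorbed into the hypotheses of the cited theorem (applied to the image laws via $\nu=\mu\circ F^{-1}$). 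You instead test directly against $h=g\circ F$ with $g\in C_b(E')$, sandwich $h$ between its Lipschitz inf/sup-convolutions $h_{(\delta)}\leq h\leq h^{(\delta)}$, use the hypothesis $X_n^\theta\Lu X^\theta$ on these continuous envelopes, and then let \eqref{eq: locally absolute continuity} do visible work: after removing the $\mu^\theta$-null set $A_\eta=\bigcap_m B_\eta^{(m)}\subseteq D_F$ you apply continuity from above of $\mu$ and the uniform absolute-continuity condition to control $\sup_\theta\int r_\delta\,d\mu^\theta$, using at the end that this quantity is nonincreasing as $\delta\downarrow 0$. This buys a self-contained proof that needs neither the external uniform Portmanteau theorem nor any tightness in the spirit of Proposition \ref{prop: 1} (your regularized test functions are already in $C_b(E)$), and it makes transparent exactly where \eqref{eq: locally absolute continuity} enters; the paper's proof is shorter by outsourcing precisely that step. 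One cosmetic slip: the identity $\bigcap_m B^{(m)}_\eta=\{\omega\geq\eta\}$ is not exact — in general only $\{\omega>\eta\}\subseteq\bigcap_m B^{(m)}_\eta\subseteq\{\omega\geq\eta\}$, since $r_{1/m}(x)$ may equal $\eta$ for every $m$ — but your argument only uses $A_\eta\subseteq\{\omega\geq\eta\}\subseteq D_F$ and the definition of $A_\eta$ as the intersection (so that $C^{(m)}_\eta\downarrow\emptyset$), hence nothing breaks.
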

\begin{proof}
 (a) Let $\e > 0$. By uniform continuity of $F$ we find $\delta > 0$ independent of $\theta$ such that $d'(F(x), c(\theta)) \leq \e$ holds for $d(x,c(\theta)) < \delta$. The assertion follows from
 \begin{align*}
     \P\left[ d'(F(X_n^{\theta}), c(\theta)) > \e \right]
     &= \P\left[ d'(F(X_n^{\theta}), c(\theta)) > \e, \ d(X_n^{\theta}, c(\theta)) \geq \delta \right]
     \\ &\leq \P\left[ d(X_n^{\theta}, c(\theta)) \geq \delta \right]
 \end{align*}

 (b) The proof is essentially the same as in part (a) and is therefore omitted.

 (c) Let $B \subset E'$ be closed. Then 
  \begin{align*}
     &\ \limsup_{n\to \infty}\sup_{\theta \in \Theta}\left(\P[ F(X_n^{\theta}) \in B] - \P[F(X^{\theta}) \in B] \right)
     \\ &\leq \limsup_{n\to \infty}\sup_{\theta \in \Theta}\left(\P[ X_n^{\theta} \in \overline{F^{-1}(B)}] - \P[X^{\theta} \in \overline{F^{-1}(B)}] \right) 
     \\ &\qquad + \sup_{\theta \in \Theta}\left(\P[ X^{\theta} \in \overline{F^{-1}(B)}] - \P[X^{\theta} \in F^{-1}(B)] \right) 
 \end{align*}
 Since $X_n^{\theta} \Lu X^{\theta}$, the first term is bounded by zero by the uniform Portmanteau theorem (see \cite[Theorem 2.1]{BH19}). For the second term we use $\overline{F^{-1}(B)} \subset F^{-1}(B) \cup D_F$ to find 
 \begin{align*}
     \sup_{\theta \in \Theta}\left(\P[ X^{\theta} \in \overline{F^{-1}(B)}] - \P[X^{\theta} \in F^{-1}(B)] \right)
     &\leq \sup_{\theta \in \Theta}\P[X^{\theta} \in D_F] = 0.
 \end{align*}
 The assertion now follows from \cite[Theorem 2.1]{BH19}.
\end{proof}

\begin{Proposition}\label{prop: uniform weak convergence}
 Let $(X_n^{\theta})_{n \geq 1, \theta \in \Theta}$, $(Y_n^{\theta})_{n \geq 1, \theta \in \Theta}$, $(X^{\theta})_{\theta \in \Theta}$, and $(Y^{\theta})_{n \geq 1, \theta \in \Theta}$ be $E$-valued random variables on some probability space $(\Omega, \F, \P)$. The following assertions hold:
 \begin{enumerate}
     \item[(a)] If $X_n^{\theta} \Lu X^{\theta}$, $(Y_n^{\theta})_{n \geq 1, \theta \in \Theta}$ satisfies
     \[
      \lim_{n \to \infty}\sup_{\theta \in \Theta}\P[ d(X_n^{\theta}, Y_n^{\theta}) > \e] = 0, \qquad \forall \e > 0,
     \]
     and $\mu_n^{\theta} = \mathcal{L}(X_n^{\theta})$, $\mu^{\theta} = \mathcal{L}(X^{\theta})$ satisfy \eqref{eq: tightness 2}, then $Y_n^{\theta} \Lu X^{\theta}$.

     \item[(b)] Suppose that $X_n^{\theta} \Lu X^{\theta}$, $Y_n^{\theta} \longrightarrow c(\theta)$ in probability uniformly on $\Theta$ with deterministic $(c(\theta))_{\theta \in \Theta}$, and for each $\e > 0$ there exists a compact $K \subset E$ such that $c(\theta) \not \in K^c$ for each $\theta \in \Theta$ and
     \begin{align}\label{eq: tightness 3}
      \sup_{n \geq 1}\sup_{\theta \in \Theta}\left(\P[X_n^{\theta} \in K^c] + \P[X^{\theta} \in K^c] + \P[Y_n^{\theta} \in K^c] \right) < \e.
     \end{align}
     Then $(X_n^{\theta}, Y_n^{\theta}) \Lu (X^{\theta}, c(\theta))$.
 \end{enumerate}
\end{Proposition}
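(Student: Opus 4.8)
The plan is to prove both parts by the same strategy: cut down the class of test functions and then use the hypotheses on $Y_n^\theta$ together with the stated tightness to localise. For part (a), I would fix a bounded continuous $f\colon E\to\R$ and go directly for \eqref{eq: weak convergence} with $Y_n^\theta$ in place of $X_n^\theta$, rather than first trying to verify tightness of the laws $\mathcal L(Y_n^\theta)$ and invoke Proposition \ref{prop: 1} — the latter route is awkward because the closeness hypothesis only controls large $n$. Given $\e>0$, I would pick the compact $K$ from \eqref{eq: tightness 2} so that $\sup_{n,\theta}\mu_n^\theta(K^c)+\sup_\theta\mu^\theta(K^c)<\e$, use uniform continuity of $f$ on a compact set containing the closed unit neighbourhood of $K$ to get $\delta\in(0,1]$ with $|f(x)-f(y)|<\e$ whenever $x\in K$ and $d(x,y)<\delta$, and split the expectation according to whether $X_n^\theta\in K$ and whether $d(X_n^\theta,Y_n^\theta)<\delta$ to obtain
\[
\sup_\theta\E\big[|f(Y_n^\theta)-f(X_n^\theta)|\big]\leq \e+2\|f\|_\infty\Big(\sup_{n,\theta}\mu_n^\theta(K^c)+\sup_\theta\P[d(X_n^\theta,Y_n^\theta)\geq\delta]\Big).
\]
Taking $\limsup_{n\to\infty}$ kills the last term by hypothesis, and letting $\e\searrow0$ gives $\lim_n\sup_\theta\E[|f(Y_n^\theta)-f(X_n^\theta)|]=0$; combining this with $X_n^\theta\Lu X^\theta$ via the triangle inequality yields $Y_n^\theta\Lu X^\theta$.

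For part (b), I would set $\nu_n^\theta=\mathcal L(X_n^\theta,Y_n^\theta)$ and $\nu^\theta=\mathcal L(X^\theta,c(\theta))$ on $E\times E$ and apply Lemma \ref{lemma: product}. First, the product tightness \eqref{eq: tightness 2} for $(\nu_n^\theta)$ and $(\nu^\theta)$ follows by taking, for given $\e$, the compact $K$ from \eqref{eq: tightness 3} (which by assumption also contains every $c(\theta)$) and using $K\times K$; the contribution $\P[c(\theta)\in K^c]=0$ is exactly where the hypothesis $c(\theta)\notin K^c$ enters. Then, for bounded Lipschitz $f,g\colon E\to\R$, I would write
\[
\E[f(X_n^\theta)g(Y_n^\theta)]-\E[f(X^\theta)]\,g(c(\theta))=\E\big[f(X_n^\theta)\big(g(Y_n^\theta)-g(c(\theta))\big)\big]+g(c(\theta))\big(\E[f(X_n^\theta)]-\E[f(X^\theta)]\big),
\]
bound the second summand by $\|g\|_\infty\sup_\theta|\E f(X_n^\theta)-\E f(X^\theta)|\to0$ (from $X_n^\theta\Lu X^\theta$), and bound the first by $\|f\|_\infty\big(2\|g\|_\infty\sup_\theta\P[d(Y_n^\theta,c(\theta))>\e']+L_g\e'\big)$ after splitting on $\{d(Y_n^\theta,c(\theta))>\e'\}$, where $L_g$ is a Lipschitz constant of $g$; taking $\limsup_n$ and then $\e'\searrow0$ uses the uniform convergence in probability $Y_n^\theta\to c(\theta)$. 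Lemma \ref{lemma: product} then gives $(X_n^\theta,Y_n^\theta)\Lu(X^\theta,c(\theta))$.

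The computations are elementary $\e$–$\delta$ estimates; the only point demanding care is the one flagged above in part (a): because tightness of the laws of $Y_n^\theta$ is not available for the finitely many small $n$, one cannot simply cite Proposition \ref{prop: 1} to reduce to Lipschitz test functions, so the argument must instead localise a generic bounded continuous $f$ onto a compact set on which it is uniformly continuous. In part (b) the analogous subtlety is ensuring that $\nu^\theta$ inherits the tightness of $\mathcal L(X^\theta)$, which is precisely why $c(\theta)$ is required to lie inside the tightness compact.
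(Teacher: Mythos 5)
Your argument is correct, and part (b) is essentially the paper's own proof: the same decomposition of $\E[f(X_n^{\theta})g(Y_n^{\theta})]-\E[f(X^{\theta})]g(c(\theta))$ into a term controlled by $Y_n^{\theta}\to c(\theta)$ in probability and a term controlled by $X_n^{\theta}\Lu X^{\theta}$, followed by Lemma \ref{lemma: product}, with the product tightness on $K\times K$ obtained exactly as you do from \eqref{eq: tightness 3} and $c(\theta)\in K$. In part (a) you deviate: the paper estimates $|\E[f(Y_n^{\theta})]-\E[f(X_n^{\theta})]|\leq C_f\eta+C_f\P[d(Y_n^{\theta},X_n^{\theta})>\eta]$ for bounded \emph{Lipschitz} $f$ and then invokes Proposition \ref{prop: 1} to upgrade to all bounded continuous test functions, whereas you work directly with a general bounded continuous $f$, localise on the compact $K$ coming from the tightness of the laws of $X_n^{\theta}$, and use that $f$ is ``uniformly continuous near $K$''. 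Your route is self-contained and in fact sidesteps the point that upgrading via Proposition \ref{prop: 1} applied to $(\mathcal{L}(Y_n^{\theta}))$ would require tightness of the $Y$-laws, which is not among the hypotheses; so the direct localisation is a legitimate (arguably cleaner) alternative, at the cost of not reusing Proposition \ref{prop: 1}. Your diagnosis of \emph{why} the reduction is delicate is slightly off target, though: the issue is not merely the small $n$, but that no tightness for $(\mathcal{L}(Y_n^{\theta}))$ is assumed at all.

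One step in your part (a) needs a better justification. In a general complete separable metric space the closed unit neighbourhood of a compact set $K$ need not be contained in any compact set (think of $K=\{0\}$ in an infinite-dimensional Hilbert space), so you cannot get your $\delta$ from ``uniform continuity of $f$ on a compact set containing the closed unit neighbourhood of $K$''. The statement you actually need is nevertheless true: for every $\e>0$ there is $\delta>0$ such that $|f(x)-f(y)|<\e$ whenever $x\in K$ and $d(x,y)<\delta$. Prove it by contradiction: otherwise there are $x_m\in K$ and $y_m\in E$ with $d(x_m,y_m)<1/m$ and $|f(x_m)-f(y_m)|\geq\e$; extracting a subsequence with $x_{m_k}\to x\in K$ forces $y_{m_k}\to x$ as well, contradicting continuity of $f$ at $x$. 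With this repair the rest of your estimate, the passage to $\limsup_{n\to\infty}$, and the triangle inequality with $X_n^{\theta}\Lu X^{\theta}$ go through as written.
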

\begin{proof}
     (a) Let $f: E \longrightarrow \R$ be bounded and Lipschitz continuous with constant $C_f$. Then
 \begin{align*}
     \left| \E[ f(Y_n^{\theta})] - \E[ f(X^{\theta})] \right|
     &\leq \left| \E[ f(Y_n^{\theta})] - \E[ f(X_n^{\theta})] \right| + \left| \E[ f(X_n^{\theta})] - \E[ f(X^{\theta})] \right|
     \\ &\leq C_f \eta + C_f \P\left[ d(Y_n^{\theta}, X_n^{\theta}) > \eta \right] + \left| \E[ f(X_n^{\theta})] - \E[ f(X^{\theta})] \right|
 \end{align*}
 for each $\eta > 0$. Using Proposition \ref{prop: 1}, we readily deduce the assertion. 

 (b) Let $f,g: E \longrightarrow \R$ be bounded and Lipschitz continuous with constants $C_f, C_g$. Then we obtain for $\e > 0$
    \begin{align*}
       &\ \left| \E[ f(X_n^{\theta})g(Y_n^{\theta}) ] - \E[ f(X^{\theta})g(c(\theta)) ]\right|
        \\ &\leq \left| \E[ f(X_n^{\theta}) g(Y_n^{\theta}) ] - \E[ f(X_n^{\theta})g(c(\theta)) ]\right|
         + \left| \E[ f(X_n^{\theta})g(c(\theta)) ] - \E[ f(X^{\theta})g(c(\theta)) ]\right|
         \\ &\leq \|f\|_{\infty}C_g \E \left[ 1 \wedge d(Y_n^{\theta}, c(\theta)) \right]
          + C_f \| g\|_{\infty} \left| \E[ f(X_n^{\theta}) ] - \E[ f(X^{\theta})]\right|
          \\ &\leq \|f\|_{\infty}C_g \e + C_g \P\left[ d(Y_n^{\theta}, c(\theta)) > \e \right]
          + C_f \| g\|_{\infty} \left| \E[ f(X_n^{\theta}) ] - \E[ f(X^{\theta})]\right|
    \end{align*}
    Taking the supremum over $\Theta$ and letting then $n \to \infty$ gives
    \[
     \limsup_{n\to \infty} \sup_{\theta \in \Theta}\left| \E[ f(X_n^{\theta})g(Y_n^{\theta}) ] - \E[ f(X^{\theta})g(c(\theta)) ]\right| \leq \|f\|_{\infty}C_g \e.
    \]
    Letting $\e \searrow 0$ proves, given Lemma \ref{lemma: product}, the assertion. Hereby assumption \eqref{eq: tightness 2} is satisfied due to $\P[ (X_n^{\theta}, Y_n^{\theta}) \in (K \times K)^c ] \leq \P[X_n^{\theta} \in K^c] + \P[Y_n^{\theta} \in K^c]$ and similarly $\P[(X^{\theta}, c(\theta)) \in (K \times K)^c] \leq \P[ X^{\theta} \in K^c] + \P[c(\theta) \in K^c]$ combined with assumption \eqref{eq: tightness 3}.
\end{proof}

Finally, let us prove a uniform weak convergence result for probability measures supported on the positive cone $\R_+^d = \{ x \in \R^d \ : \ x_k \geq 0, k = 1,\dots, d\}$ in terms of Laplace transforms.

\begin{Theorem}\label{thm: uniform Laplace transform convergence}
    Let $(\mu_{n}^{\theta})_{n \geq 1, \theta \in \Theta}$ and $(\mu^{\theta})_{\theta \in \Theta}$ be Borel probability measures on $\R_+^d$. Suppose that
    \begin{align*}
        \lim_{n \to \infty}\sup_{\theta \in \Theta}\left| \int_{\R_+^d} e^{-\langle z,x\rangle}\mu_n^{\theta}(dx) - \int_{\R_+^d}e^{-\langle z,x\rangle}\mu^{\theta}(dx)\right| = 0
    \end{align*}
    and for each $\e>0$ there exists $R > 0$ such that $\sup_{\theta \in \Theta}\mu^{\theta}(\{|x| > R\}) < \e$. Then $\mu_n^{\theta} \Lu \mu^{\theta}$.
\end{Theorem}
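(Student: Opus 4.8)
The plan is to deduce the conclusion $\mu_n^\theta \Lu \mu^\theta$ from the standard strategy behind Proposition~\ref{prop: 1}: first establish a tightness estimate, then use a Stone--Weierstrass approximation of bounded continuous functions by finite linear combinations of exponentials $x\mapsto e^{-\langle z,x\rangle}$. The one novelty is that I will \emph{not} try to prove uniform tightness of $(\mu_n^\theta)$ over all $n\ge1$ (without extra hypotheses on the ``small-$n$'' measures this can genuinely fail); instead, since the assertion is a statement about $\limsup_{n\to\infty}\sup_\theta|\cdots|$, it is enough to produce, for every $\e>0$, an index $N_\e$ and a radius $R_\e$ controlling the tails of $\mu_n^\theta$ for all $n\ge N_\e$ and all $\theta$.

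For the tightness step I would first record the elementary fact that for any probability measure $\mu$ on $\R_+^d$ and any $t,R>0$, writing $|x|_1=x_1+\dots+x_d$ and $z_t=(t,\dots,t)$, one has $\1_{\{|x|_1>R\}}\le (1-e^{-tR})^{-1}(1-e^{-t|x|_1})$, hence
\[
 \mu(\{|x|_1>R\})\le (1-e^{-tR})^{-1}\bigl(1-\widehat\mu(z_t)\bigr),\qquad \widehat\mu(z):=\int_{\R_+^d}e^{-\langle z,x\rangle}\mu(dx),
\]
and moreover $1-\widehat\mu(z_t)=\int(1-e^{-t|x|_1})\mu(dx)\le tR_0+\mu(\{|x|_1>R_0\})$ for every $R_0>0$. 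Given $\e\in(0,1/2)$, the assumed uniform tightness of $(\mu^\theta)_\theta$ (together with equivalence of norms) provides $R_0$ with $\sup_\theta\mu^\theta(\{|x|_1>R_0\})<\e$, so that $\sup_\theta(1-\widehat{\mu^\theta}(z_t))\le 2\e$ as soon as $t\le\e/R_0$. Fixing such a $t=t_\e$ and applying the hypothesis at the \emph{single} point $z=z_{t_\e}$ yields $N_\e$ with $\sup_\theta|\widehat{\mu_n^\theta}(z_{t_\e})-\widehat{\mu^\theta}(z_{t_\e})|<\e$ for $n\ge N_\e$, whence $\sup_{n\ge N_\e}\sup_\theta(1-\widehat{\mu_n^\theta}(z_{t_\e}))<3\e$. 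Choosing $R_\e\ge(\ln 2)/t_\e$ then gives $\sup_{n\ge N_\e}\sup_\theta\mu_n^\theta(\{|x|_1>R_\e\})\le 6\e$ and $\sup_\theta\mu^\theta(\{|x|_1>R_\e\})\le 4\e$.

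For the approximation step, fix $f\in C_b(\R_+^d)$ and $\e$, with $R_\e,N_\e$ as above, and pick $\varphi\in C_c(\R_+^d)$ with $0\le\varphi\le1$ and $\varphi\equiv1$ on $\{|x|_1\le R_\e\}$, so that $\varphi f\in C_0(\R_+^d)$. The (non-unital) subalgebra of $C_0(\R_+^d)$ generated by $\{x\mapsto e^{-\langle z,x\rangle}:z\in(0,\infty)^d\}$ separates points of $\R_+^d$ and vanishes nowhere, so by the Stone--Weierstrass theorem for locally compact spaces it is dense; using $e^{-\langle z,\cdot\rangle}e^{-\langle z',\cdot\rangle}=e^{-\langle z+z',\cdot\rangle}$, every element is a finite combination $g=\sum_{j}c_j e^{-\langle z_j,\cdot\rangle}$ with $z_j\in(0,\infty)^d$. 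Thus there is such a $g$ with $\|g-\varphi f\|_\infty<\e$, and in particular $\|g\|_\infty\le\|f\|_\infty+\e$; approximating the \emph{compactly supported} $\varphi f$ in the sup-norm over all of $\R_+^d$ (rather than approximating $f$ on a compact set) is what keeps $\|g\|_\infty$ bounded uniformly as $\e\searrow0$. Then for $n\ge N_\e$ and any $\theta$ one bounds $|\int f\,d\mu_n^\theta-\int f\,d\mu^\theta|$ by $|\int g\,d\mu_n^\theta-\int g\,d\mu^\theta|+|\int(f-g)\,d\mu_n^\theta|+|\int(f-g)\,d\mu^\theta|$, estimating $|\int(f-g)\,d\nu|\le\|f\|_\infty\,\nu(\{|x|_1>R_\e\})+\|g-\varphi f\|_\infty\le 6\e\|f\|_\infty+\e$ for $\nu\in\{\mu_n^\theta,\mu^\theta\}$, and $|\int g\,d\mu_n^\theta-\int g\,d\mu^\theta|\le\sum_j|c_j|\,|\widehat{\mu_n^\theta}(z_j)-\widehat{\mu^\theta}(z_j)|$, whose supremum over $\theta$ tends to $0$ as $n\to\infty$ by the hypothesis applied to the finitely many points $z_j$. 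Taking $\sup_\theta$ and then $\limsup_{n\to\infty}$ gives $\limsup_n\sup_\theta|\int f\,d\mu_n^\theta-\int f\,d\mu^\theta|\le(12\|f\|_\infty+2)\e$, and letting $\e\searrow0$ finishes the proof.

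The main obstacle I anticipate is precisely the tightness step: one must upgrade the merely pointwise-in-$z$ (uniform-in-$\theta$) convergence of Laplace transforms to an eventual uniform-in-$(n,\theta)$ control of the tails $\mu_n^\theta(\{|x|>R\})$; the resolution is to exploit that the single-point inequality $\mu(\{|x|_1>R\})\le(1-e^{-tR})^{-1}(1-\widehat\mu(z_t))$, evaluated at an $\e$-dependent $t$, converts the hypothesis at that one point into the required tail bound. The remaining care is bookkeeping: keeping the approximant $g$ uniformly bounded (handled above) and remembering that only $n\ge N_\e$ needs to be treated.
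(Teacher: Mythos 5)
Your proof is correct, and it follows the same basic strategy as the paper's (very brief) argument: uniform tightness of the limit family plus a Stone--Weierstrass approximation of bounded continuous functions by finite linear combinations of $e^{-\langle z,\cdot\rangle}$, followed by the three-term splitting as in Proposition~\ref{prop: 1}. The genuine difference is that you supply a step the paper leaves implicit: Proposition~\ref{prop: 1} assumes tightness of \emph{both} families \eqref{eq: tightness 2}, whereas the theorem only assumes tightness of $(\mu^{\theta})_{\theta\in\Theta}$, so the tail terms $\int_{\{|x|>R\}}|f-g|\,d\mu_n^{\theta}$ are not controlled by the stated hypotheses alone. Your observation that the bound $\mu(\{|x|_1>R\})\le(1-e^{-tR})^{-1}\bigl(1-\widehat{\mu}(z_t)\bigr)$, evaluated at a single $\e$-dependent point $z_{t_\e}$, converts the Laplace-transform hypothesis plus tightness of the limits into eventual (in $n$, uniform in $\theta$) tail control of $\mu_n^{\theta}$ is exactly the missing ingredient, and it makes your version self-contained where the paper's ``deduced similarly to the proof of Proposition~\ref{prop: 1}'' quietly relies on unproved tightness of the approximating family. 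Two smaller variations are also sound and slightly cleaner than the paper's sketch: approximating the compactly supported function $\varphi f$ in the sup norm on all of $\R_+^d$ via the locally compact Stone--Weierstrass theorem (which automatically yields the norm bound $\|g\|_\infty\le\|f\|_\infty+\e$ that the paper merely asserts), and working only with $n\ge N_\e$ rather than attempting uniform-in-$n$ tightness, which indeed need not hold. In short: same skeleton, but your write-up closes a real gap in the paper's proof rather than merely reproducing it.
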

\begin{proof}
    Let $f: \R^d \longrightarrow \R$ be continuous and bounded. Fix $\e > 0$ and let $R > 0$ such that $\mu^{\theta}(\{|x| > R\}) < \e$ for each $\theta \in \Theta$. Again by the Stone-Weierstrass theorem there exists $g$ being the finite linear combination of $e^{-\langle z,\cdot\rangle}$ such that $\sup_{|x| \leq R}|f(x) - g(x)| < \e$. Moreover, we may take $g$ is such a way that $\|g\|_{\infty} \leq \|f\|_{\infty} + 1$. The assertion can be deduced similarly to the proof of Proposition \ref{prop: 1}.
\end{proof}

\section{Supplementary material: Tables and figures}

\begin{table}[H]
\centering
\begin{minipage}[b]{0.4\textwidth}
\begin{tabular}{ |p{1.0cm}|p{1.0cm}||p{1.6cm}|p{1.6cm}|p{1.6cm}|p{1.6cm}|}
    \hline
    \multicolumn{4}{|c|}{$\Delta t_{k+1}=0.05$ and $\alpha = 0.75$} \\
    \hline
    $T$ &
    $\varepsilon$ &
    rel. error $\widehat{b}^{\text{MLE}}$ & 
    rel. error $\widehat{\beta}^{\text{MLE}}$ \\
    \hline
    10 & 0.5 & 0.3202 & 0.3518    \\
    20 & 0.5 & 0.1579 & 0.2017    \\
    30 & 0.5 & 0.0829 & 0.1284    \\
    40 & 0.5 & 0.0538 & 0.0878    \\
    50 & 0.5 & 0.0385 & 0.0750    \\
    60 & 0.5 & 0.0268 & 0.0567    \\
    70 & 0.5 & 0.0195 & 0.0494    \\
    80 & 0.5 & 0.0146 & 0.0408    \\
    90 & 0.5 & 0.0132 & 0.0374    \\
    100 & 0.5 & 0.0081 & 0.0295   \\
    \hdashline
    10 & 0.6 & 0.3447 & 0.3724    \\
    20 & 0.6 & 0.1728 & 0.2151    \\
    30 & 0.6 & 0.0918 & 0.1361    \\
    40 & 0.6 & 0.0635 & 0.0958    \\
    50 & 0.6 & 0.0479 & 0.0836    \\
    60 & 0.6 & 0.0326 & 0.0618    \\
    70 & 0.6 & 0.0268 & 0.0559    \\
    80 & 0.6 & 0.0209 & 0.0462    \\
    90 & 0.6 & 0.0180 & 0.0416    \\
    100 & 0.6 & 0.0128 & 0.0338   \\
    \hdashline
    10 & 0.7 & 0.3657 & 0.3896   \\
    20 & 0.7 & 0.1852 & 0.2259    \\
    30 & 0.7 & 0.0998 & 0.1431    \\
    40 & 0.7 & 0.0709 & 0.1022    \\
    50 & 0.7 & 0.0533 & 0.0885    \\
    60 & 0.7 & 0.0398 & 0.0687    \\
    70 & 0.7 & 0.0319 & 0.0604    \\
    80 & 0.7 & 0.0270 & 0.0520    \\
    90 & 0.7 & 0.0221 & 0.0454    \\
    100 & 0.7 & 0.0173 & 0.0381   \\
    \hline
\end{tabular}
\captionsetup{width=1\linewidth}
\caption{VCIR process with shifted fractional kernel: varying $T$ and $\varepsilon$.}
\label{table:VCIR_T_shift}
\end{minipage}
  \hfill
\begin{minipage}[b]{0.4\textwidth}
\begin{tabular}{ |p{1.0cm}|p{1.0cm}||p{1.6cm}|p{1.6cm}|p{1.6cm}|p{1.6cm}|}
    \hline
    \multicolumn{4}{|c|}{$\varepsilon=0$ and $\alpha = 0.75$} \\
    \hline
    $T$ &
    $\Delta t_{k+1}$ &
    rel. error $\widehat{b}^{\text{MoM}}$ & 
    rel. error $\widehat{\beta}^{\text{MoM}}$ \\
    \hline
    50 & 0.50 & 0.2829 & 0.3599 \\
    100 & 0.50 & 0.1638 & 0.1953 \\
    150 & 0.50 & 0.1295 & 0.1523 \\
    200 & 0.50 & 0.1163 & 0.1363 \\
    250 & 0.50 & 0.1074 & 0.1258 \\
    300 & 0.50 & 0.0983 & 0.1140 \\
    \hdashline
    50 & 0.10 & 0.2097 & 0.2950 \\
    100 & 0.10 & 0.1227 & 0.1705 \\
    150 & 0.10 & 0.0917 & 0.1253 \\
    200 & 0.10 & 0.0664 & 0.0907 \\
    250 & 0.10 & 0.0533 & 0.0723 \\
    300 & 0.10 & 0.0494 & 0.0665 \\
    \hdashline
    50 & 0.05 & 0.1630 & 0.2544 \\
    100 & 0.05 & 0.0664 & 0.1100 \\
    150	& 0.05 & 0.0372 & 0.0677 \\
    200 & 0.05 & 0.0213 & 0.0434 \\
    250 & 0.05 & 0.0103 & 0.0280 \\
    300 & 0.05 & 0.0048 & 0.0210 \\
    \hline
\end{tabular}
\captionsetup{width=1\linewidth}
\caption{VCIR process with fractional kernel: varying $T$ and $\Delta t_{k+1}$.}
\label{table:VCIR_T_dt_MOM}
\end{minipage}
\end{table}

\begin{figure}[H]
  \centering
  \begin{minipage}[b]{0.45\textwidth}
    \includegraphics[width=\textwidth]{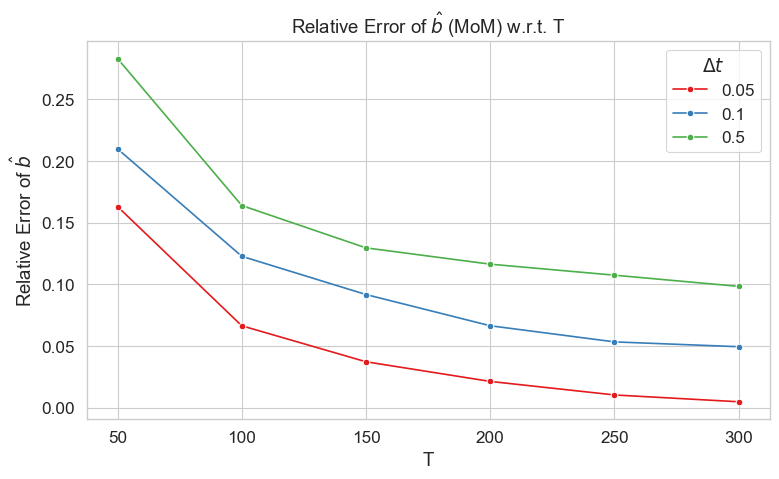}
    \captionsetup{width=1\linewidth}
    \caption{VCIR process with fractional kernel, $\alpha=0.75$, $\Delta t_{k+1}=0.05$.}
    \label{fig:VCIR_b_estimate}
    \end{minipage}
  \hfill
  \begin{minipage}[b]{0.45\textwidth}
    \includegraphics[width=\textwidth]{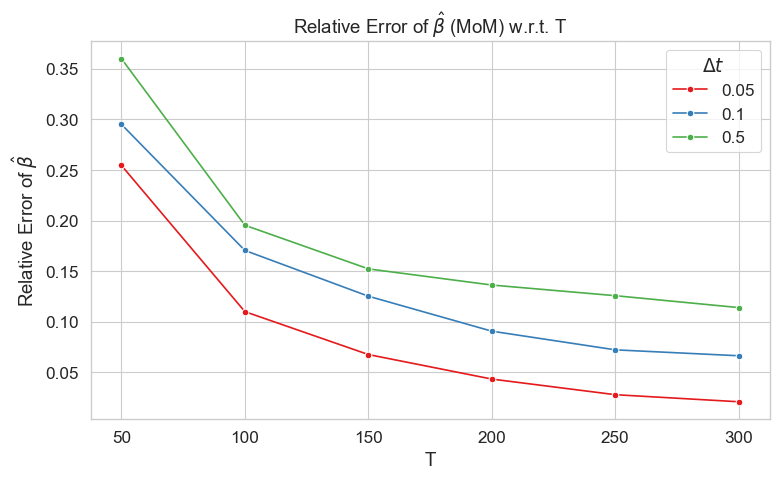}
    \captionsetup{width=1\linewidth}
    \caption{VCIR process with fractional kernel, $\alpha=0.75$, $\Delta t_{k+1}=0.05$.}
    \label{fig:VCIR_beta_estimate}
  \end{minipage}
\end{figure}

\begin{figure}[H]
  \centering
  \begin{minipage}[b]{0.45\textwidth}
    \includegraphics[width=\textwidth]{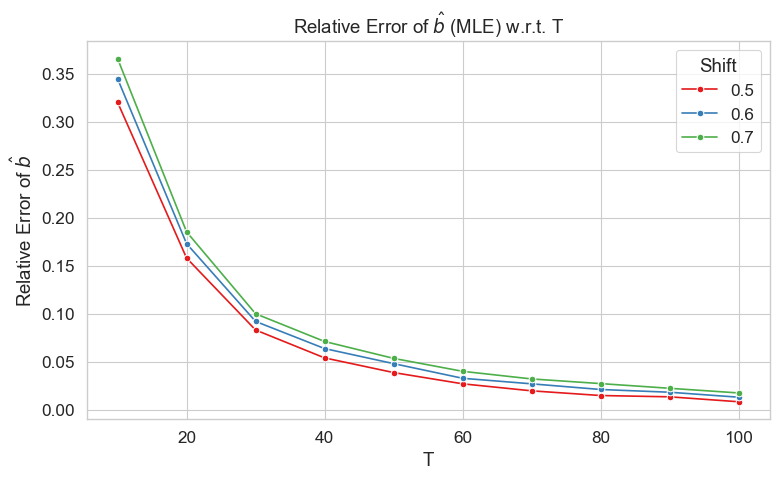}
    \captionsetup{width=1\linewidth}
    \caption{VCIR process with fractional kernel, $\alpha=0.75$, $\Delta t_{k+1} = 0.05$.}
    \label{fig:VCIR_b_MOM_estimate}
    \end{minipage}
  \hfill
  \begin{minipage}[b]{0.45\textwidth}
    \includegraphics[width=\textwidth]{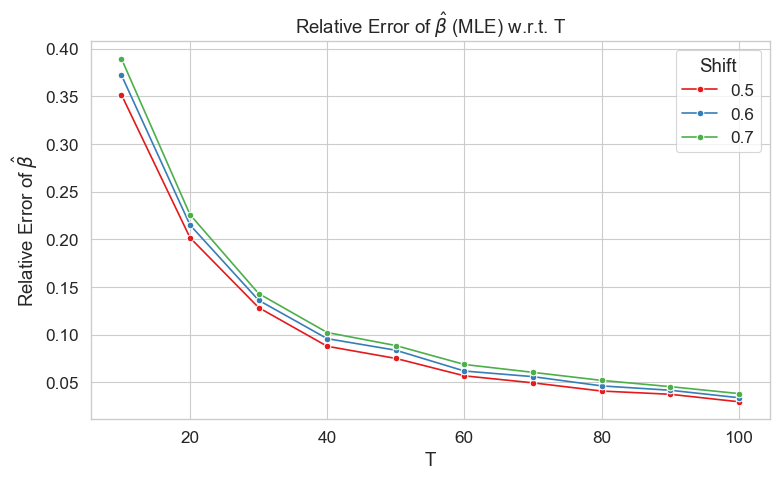}
    \captionsetup{width=1\linewidth}
    \caption{VCIR process with fractional kernel, $\alpha=0.75$, $\Delta t_{k+1} = 0.05$.}
    \label{fig:VCIR_beta_MOM_estimate}
  \end{minipage}
\end{figure}

\begin{table}[H]
\centering
\begin{minipage}[b]{0.4\textwidth}
\begin{tabular}{ |p{1.0cm}||p{2.0cm}|p{2.0cm}|}
 \hline
 \multicolumn{3}{|c|}{$T = 100$, $\varepsilon = 0.15$, and $\alpha = 0.75$} \\
 \hline
 $\Delta t_{k+1}$ & 
 rel. error $\widehat{b}^{\text{MLE}}$ & 
 rel. error $\widehat{\beta}^{\text{MLE}}$ 
 \\
 \hline
1.00 & 0.0000 & 0.0134 \\
0.50 & 0.0000 & 0.0123 \\
0.10 & 0.0000 & 0.0235 \\
0.05 & 0.0006 & 0.0232 \\
0.01 & 0.0054 & 0.0255 \\
     \hline
\end{tabular}
\captionsetup{width=1\linewidth}
\caption{VCIR process with shifted fractional kernel: varying $\Delta t_{k+1}$.}
\label{table:VCIR_dt}
\end{minipage}
  \hfill
\begin{minipage}[b]{0.4\textwidth}
\begin{tabular}{ |p{1.0cm}||p{2.0cm}|p{2.0cm}|}
 \hline
 \multicolumn{3}{|c|}{$T = 100$, $\Delta t_{k+1}$, and $\varepsilon = 0.15$} \\
 \hline
 $\alpha$ & 
 rel. error $\widehat{b}^{\text{MLE}}$ & 
 rel. error $\widehat{\beta}^{\text{MLE}}$ 
 \\
 \hline
0.55 & 0.0007 & 0.0309 \\
0.65 & 0.0006 & 0.0262 \\
0.75 & 0.0006 & 0.0232 \\
0.85 & 0.0007 & 0.0212 \\
0.95 & 0.0007 & 0.0199 \\
     \hline
\end{tabular}
\captionsetup{width=1\linewidth}
\caption{VCIR process with shifted fractional kernel: varying $\alpha$.}
\label{table:VCIR_alpha}
\end{minipage}
\end{table}

\subsection*{Acknowledgement}

M. F. would like to thank the LMRS for the wonderful hospitality and Financial support in 2023 during which a large part of the research was carried out. Moreover, M.F. gratefully acknowledges the Financial support received from Campus France awarded by Ambassade de France en Irlande to carry out the project “Parameter estimation in affine rough models”.

\bibliographystyle{amsplain}
\phantomsection\addcontentsline{toc}{section}{\refname}\bibliography{Bibliography}

\end{document}